\theoremstyle{plain}
\newtheorem{thm}{Theorem}[section]
\newtheorem{cor}[thm]{Corollary}
\newtheorem{lem}[thm]{Lemma}
\newtheorem{prop}[thm]{Proposition}
\theoremstyle{definition}
\newtheorem{defn}[thm]{Definition}
\theoremstyle{remark}
\newtheorem{obs}[thm]{Remark}
\numberwithin{equation}{section}
\newcommand{\average}{{\mathchoice {\kern1ex\vcenter{\hrule height.4pt
width 6pt depth0pt} \kern-9.7pt} {\kern1ex\vcenter{\hrule
height.4pt width 4.3pt depth0pt} \kern-7pt} {} {} }}
\newcommand{\R}{\mathbb R}
\renewcommand{\L}{\mathcal L}
\newcommand{\p}{\partial}
\newcommand{\comment}[1]{}
\begin{document}

\title[New boundary Harnack inequalities with right hand side]{New boundary Harnack inequalities\\ with right hand side}
\author{Xavier Ros-Oton}
\address{ICREA, Pg.\ Llu\'is Companys 23, 08010 Barcelona, Spain \&\newline\indent
Universitat de Barcelona, Departament de Matem\`atiques i Inform\`atica, Gran Via de les Corts Catalanes 585, 08007 Barcelona, Spain.}
\email{xros@ub.edu}
\author{Dami\`a Torres-Latorre}
\address{Universitat de Barcelona, Departament de Matem\`atiques i Inform\`atica, Gran Via de les Corts Catalanes 585, 08007 Barcelona, Spain.}
\email{\tt dtorres-latorre@ub.edu}

\begin{abstract}
We prove new boundary Harnack inequalities in Lipschitz domains for equations with a right hand side. Our main result applies to non-divergence form operators with bounded measurable coefficients and to divergence form operators with continuous coefficients, whereas the right hand side is in $L^q$ with $q > n$. Our approach is based on the scaling and comparison arguments of \cite{DS20}, and we show that all our assumptions are sharp.

As a consequence of our results, we deduce the $\mathcal{C}^{1,\alpha}$ regularity of the free boundary in the fully nonlinear obstacle problem and the fully nonlinear thin obstacle problem.
\end{abstract}

\thanks{XR and DT have received funding from the European Research Council (ERC) under the Grant Agreement No 801867. XR was supported by the Swiss National Science Foundation, and by MINECO grant MTM2017-84214-C2-1-P (Spain).}
\subjclass{35R35, 35J60}
\keywords{Harnack Inequality, Free boundary regularity, Fully nonlinear obstacle problem, Fully nonlinear thin obstacle problem.}
\maketitle

\vspace{0.5cm}

\section{Introduction}\label{sect:intro}
\addtocontents{toc}{\protect\setcounter{tocdepth}{1}}
\subsection{Background}
The boundary Harnack inequality states that all positive harmonic functions with zero boundary condition are locally comparable as they approach the boundary, under appropriate assumptions on the domain. More precisely, if $u$ and $v$ are positive harmonic functions in $\Omega$ that vanish on $\p\Omega$, then
$$C^{-1} \leq \frac{u}{v} \leq C,$$
with $C$ depending on the dimension and $u(p)/v(p)$ for a fixed interior point $p$.

Notice that such a result is most relevant in domains that are less regular than $\mathcal{C}^{1,\mathrm{Dini}}$, because otherwise the Hopf lemma combined with the $\mathcal{C}^1(\overline{\Omega})$ regularity of the solutions yields the same conclusion, see for example \cite{LZ18}.

The boundary Harnack inequality is known to be true for a broad class of domains and for solutions of more general elliptic equations. The classical case for harmonic functions was first proved by Kemper in Lipschitz domains in \cite{Kem72}. Operators in divergence form were first considered by Caffarelli, Fabes, Mortola and Salsa in \cite{CFMS81} in Lipschitz domains, while the case of operators in non-divergence form was treated in \cite{FGMS88} by Fabes, Garofalo, Marin-Malave and Salsa. Jerison and Kenig extended the same result to NTA domains in the case of divergence form operators in \cite{JK82}. On the other hand, the case of non-divergence operators in Hölder domains with $\alpha > 1/2$ was treated with probabilistic techniques in \cite{BB94} by Bass and Burdzy. Recently, De Silva and Savin found a simple and unified proof of all these previous results in \cite{DS20}.

Besides, Allen and Shahgholian recently proved the boundary Harnack for divergence form equations \textbf{with right hand side} in Lipschitz domains \cite{AS19}, under appropriate assumptions on the operator, the right hand side and the domain. In particular, in the case of the Laplacian, their result implies that if the $L^\infty$ norm of the right hand side and the Lipschitz constant of the domain are small enough, then the boundary Harnack inequality still holds. This enables using the classical proof in \cite{Caf98} due to Caffarelli (see also \cite[Section 6.2]{PSU12} or \cite[Section 5.4]{FR20}) of the regularity of the free boundary in the obstacle problem $\Delta u = \chi_{\{u > 0\}}$ in the more general case $\Delta u = f\chi_{\{u > 0\}}$, with $f$ Lipschitz; see \cite[Section 1.4.2]{AS19}.

Here, we extend such boundary Harnack inequality to \textbf{non-divergence} equations with possibly \textbf{unbounded} right hand side in $L^q$, with $q > n$. (This was only known in $\mathcal{C}^{1,1}$ domains \cite{Sir17, Sir20}.) This allows us to use the classical proof of the free boundary regularity in the obstacle problem $\Delta u = f\chi_{\{u > 0\}}$ to the case $f \in W^{1,q}$, and can also be applied to fully nonlinear free boundary problems of the form
\begin{equation}\label{eq:fully_nonlinear_intro}
F(D^2u) = f\chi_{\{u > 0\}}
\quad \text{or}\quad
\left\{
\begin{array}{rcll}
F(D^2v) & = & 0 & \text{in } \{v > \varphi\}\\
F(D^2v) & \leq & 0 &\\
v & \geq & \varphi. &
\end{array}
\right.
\end{equation}

Moreover, we also establish a boundary Harnack for equations with a right hand side in \textit{slit domains}, and use it to establish the $\mathcal{C}^{1,\alpha}$ regularity of the free boundary in the fully nonlinear thin obstacle problem, a question left open in \cite{RS16}.

\subsection{Setting}
In the following, $\mathcal{L}$ will denote either a non-divergence form elliptic operator with \textbf{bounded measurable} coefficients,
\begin{equation}\label{eq:non-divergence_operator}
\mathcal{L}u = \operatorname{Tr}(A(x)D^2u), \quad \text{with} \quad \lambda I \leq A(x) \leq \Lambda I,
\end{equation}
with $0 < \lambda \leq \Lambda$, or a divergence form elliptic operator with \textbf{continuous} coefficients,
\begin{equation}\label{eq:divergence_operator}
\mathcal{L}u = \operatorname{Div}(A(x)\nabla u), \quad \text{with} \quad \lambda I \leq A(x) \leq \Lambda I \quad \text{and} \quad A \in \mathcal{C}^0,
\end{equation}
where $A$ has modulus of continuity $\sigma$, and $0 < \lambda \leq \Lambda$.

We will consider Lipschitz domains of the following form, where $B_1'$ is the unit ball of $\R^{n-1}$.

\begin{defn}\label{defn:lipschitz_domain}
We say $\Omega$ is a Lipschitz domain with  Lipschitz constant $L$ if $\Omega$ is the epigraph of a Lipschitz function $g : B_1' \to \R$, with $g(0) = 0$:
$$\Omega = \big\{(x', x_n) \in B_1'\times\R \quad \text{such that} \quad x_n > g(x')\big\}, \quad \|g\|_{\mathcal{C}^{0,1}} = L.$$
\end{defn}

\subsection{Main results}
We present here our new boundary Harnack inequality. 

We emphasize that the following result applies to both non-divergence and divergence form operators, and that the only regularity assumption on the coefficients is the continuity of $A(x)$ in case of divergence-form operators. Throughout the paper, when we say $L^n$-viscosity or weak solutions, we refer to $L^n$-viscosity solutions in the case of non-divergence form operators (\ref{eq:non-divergence_operator}), and to weak solutions in the case of divergence form operators (\ref{eq:divergence_operator}).

\begin{thm}\label{thm:BH_rhs}
Let $q > n$ and $\mathcal{L}$ as in (\ref{eq:non-divergence_operator}) or (\ref{eq:divergence_operator}). There exist small constants $c_0 > 0$ and $L_0 > 0$ such that the following holds.

Let $\Omega$ be a Lipschitz domain as in Definition \ref{defn:lipschitz_domain}, with Lipschitz constant $L < L_0$. Let $u$ and $v > 0$ be solutions of
\begin{equation*}
\left\{
\begin{array}{rcll}
\mathcal{L}u & = & f & \text{in } \Omega \cap B_1\\
u & = & 0 & \text{on } \partial\Omega \cap B_1
\end{array}
\right.
\ \text{and}\quad
\left\{
\begin{array}{rcll}
\mathcal{L}v & = & g & \text{in } \Omega \cap B_1\\
v & = & 0 & \text{on } \partial\Omega \cap B_1,
\end{array}
\right.
\end{equation*}
in the $L^n$-viscosity or the weak sense, with
\begin{equation}\label{eq:f_g_fitades}
\|f\|_{L^q(B_1)} \leq c_0,\quad \|g\|_{L^q(B_1)} \leq c_0.
\end{equation}

Additionally, assume that $v(e_n/2) \geq 1$ and either $u > 0$ and $u(e_n/2) \leq 1$, or $\|u\|_{L^p(B_1)} \leq 1$ for some $p > 0$.

Then,
$$u \leq Cv \quad \text{in} \quad B_{1/2},$$
and
$$\left\|\frac{u}{v}\right\|_{\mathcal{C}^{0,\alpha}(\overline{\Omega}\cap B_{1/2})} \leq C.$$
The constants $C$, $c_0$, $L_0$ and $\alpha > 0$ depend only on the dimension, $q$, $\lambda$, $\Lambda$, as well as $p$ and $\sigma$, when applicable.
\end{thm}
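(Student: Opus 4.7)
The plan is to follow the scaling-and-comparison approach of \cite{DS20}, carefully accounting for the right hand side via the ABP estimate, which is the tool available precisely when $q > n$. I would start by observing that under the zoom $\tilde u(x) = u(rx)$ the equation becomes $\mathcal{L}_r \tilde u = r^2 f(r\cdot)$, with $\|r^2 f(r\cdot)\|_{L^q(B_1)} = r^{2-n/q}\|f\|_{L^q(B_r)}$. The exponent $2 - n/q$ is positive exactly when $q > n$, so the right hand side is subcritical under rescaling and behaves as a lower-order perturbation at small scales. This is what will make all the subsequent comparison arguments scale-invariant.

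Next I would prove the Carleson-type estimate $u \leq Cv$ in $B_{1/2}$ via dyadic iteration. The two ingredients needed are a lower barrier for $v$ of the form $\psi(x) = c\, \operatorname{dist}(x,\p\Omega)^\gamma$, adjusted so that $\mathcal{L}\psi$ dominates the worst-case contribution of $g$ on each scale (which requires smallness of both $c_0$ and $L_0$, so that $\Omega$ is close enough to a half-space), together with the boundary H\"older decay $\|u\|_{L^\infty(B_r \cap \Omega)} \leq C r^\alpha$ for some $\alpha$ depending on $q$ and $L_0$. Combining these with the interior Harnack inequality propagated from the normalization $v(e_n/2) \geq 1$ forces $u \leq Cv$. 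The alternative hypothesis $\|u\|_{L^p(B_1)} \leq 1$ is upgraded to an $L^\infty$-type bound on $\{u > 0\}$ via a standard local maximum principle for subsolutions.

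With the Carleson estimate in hand, the $\mathcal{C}^{0,\alpha}$ bound on $u/v$ should follow from the \cite{DS20} improvement-of-oscillation scheme: given $m \leq u/v \leq M$ on $B_{r_k}\cap\Omega$, the functions $u - mv$ and $Mv - u$ are both nonnegative, satisfy equations of the same type with right hand side $f - mg$ or $Mg - f$, and applying the Carleson estimate to each on the smaller ball $B_{r_{k+1}}$ reduces the oscillation of $u/v$ by a fixed geometric factor. The rescalings $\tilde w(x) = w(r_k x)/r_k^\beta$, with $\beta < 2 - n/q$, preserve the smallness condition on the right hand side in $L^q$ and close the iteration.

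The main obstacle I anticipate is the construction of barriers that work simultaneously for both non-divergence operators with merely bounded measurable coefficients and divergence operators with continuous coefficients. In the non-divergence case one relies on radial Pucci-type barriers whose sign is sensitive both to the perturbation by $f$ and to the Lipschitz character of $\p\Omega$; the thresholds $c_0$ and $L_0$ are precisely what guarantees that the sign of $\mathcal{L}\psi$ is preserved. In the divergence case one must additionally absorb the modulus of continuity $\sigma$ of the coefficients into the constants, since without $\sigma$ the barrier computation is not stable. Making this construction quantitative and fully scale-invariant, in a way compatible with the rescaling used in the iteration of the oscillation decay, is the delicate technical step.
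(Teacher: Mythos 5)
You correctly identify the DS20 scaling-and-comparison framework, and the key observation that the right hand side rescales subcritically precisely when $q > n$ (the exponent $2 - n/q > 0$); this is indeed the heart of the matter. However, the mechanism you propose for the Carleson-type estimate $u \leq Cv$ is genuinely different from the paper's, and as written it has a gap. You suggest combining a \emph{lower} barrier $v \gtrsim \mathrm{dist}(\cdot,\p\Omega)^\gamma$ with a boundary H\"older decay $\|u\|_{L^\infty(B_r\cap\Omega)} \lesssim r^\alpha$. But the barrier for $v$ needs its exponent $\gamma > 1$ strictly (to beat the unbounded $L^q$ right hand side under rescaling, the paper uses $\kappa = 1 + \tfrac12(1 - n/q)$), while the boundary H\"older exponent $\alpha$ for $u$ in a Lipschitz domain is necessarily $< 1$; so the pointwise ratio this would control behaves like $d^{\alpha - \gamma}$, which diverges. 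You cannot close the argument by matching decay rates of $u$ and $v$ separately.

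The paper circumvents this by working directly with the auxiliary function $w = \tfrac{1+\varepsilon}{m}v - \tfrac{\varepsilon}{K}u$ and proving a positivity-propagation lemma (Lemma \ref{lem:BH_rhs_iteracio}): if $w \geq 1$ at bounded distance from $\partial\Omega$ and $w \geq -\varepsilon$ near $\partial\Omega$, then after rescaling by a fixed factor $\rho$ the same dichotomy holds at the smaller scale. Iterating gives $w > 0$, hence $u \leq Cv$. The two ingredients that make the iteration close are (a) the cone barrier of Lemma \ref{lem:BH_subsolution}, which controls the growth of the \emph{positive} part of $w$ in the interior of a cone and plays roughly the role of your barrier for $v$, and (b) the measure-theoretic De Giorgi--Krylov--Safonov oscillation estimate (Corollary \ref{cor:measure_Harnack_rhs}), which geometrically decays the \emph{negative} part of $w$ near the boundary. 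Your sketch does not mention (b) at all, and it is essential: without it there is no way to propagate the one-sided bound $w \geq -\varepsilon$ to the next scale. The point of combining $u$ and $v$ into $w$ is precisely that the sign-indefinite quantity whose negative part you need to kill is the \emph{difference}, not $u$ itself, and this makes the positivity argument scale-invariant in a way your separate-barrier approach is not.

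Two more minor remarks. First, you do correctly observe that the $\|u\|_{L^p} \leq 1$ hypothesis is upgraded to an $L^\infty$ bound via a local maximum principle, which matches the paper's Proposition \ref{prop:BH_rhs_fitada}. Second, your oscillation-decay scheme for the $\mathcal{C}^{0,\alpha}$ conclusion, iterating on $u - a_j v$ and $b_j v - u$, is the right idea and matches the paper, though you would need the growth lower bound $v \gtrsim d^\kappa$ (Corollary \ref{cor:BH_rhs_growth}) to normalize the rescaled functions so that the boundary Harnack can be applied at each scale, and you would need to track carefully that the rescaled right hand sides stay below the threshold $c_0$; the paper's choice of $\kappa$ strictly between $1$ and $2 - n/q$ is what allows these two requirements to coexist.
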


\begin{obs}
All the hypotheses of the theorem are optimal in the following sense:
\begin{itemize}
    \item If the Lipschitz constant $L_0$ of the domain is not small, the theorem fails, even for $q = \infty$ and for $\mathcal{L} = \Delta$.
    \item If $q = n$, the theorem fails for any $c_0 > 0$ and any $L_0 > 0$, even for $\mathcal{L} = \Delta$.
    \item The result fails in general for operators in divergence form with bounded measurable coefficients.
\end{itemize}
We provide counterexamples to plausible extensions in this sense in Section \ref{sect:counterexample}.
\end{obs}

When the two functions are positive, we recover the standard symmetric formulation of the boundary Harnack.

\begin{cor}\label{cor:BH_rhs_symmetric}
Let $q > n$ and $\mathcal{L}$ as in (\ref{eq:non-divergence_operator}) or (\ref{eq:divergence_operator}). There exist small constants $c_0 > 0$ and $L_0 > 0$ such that the following holds. Let $\Omega$ be a Lipschitz domain as in Definition \ref{defn:lipschitz_domain}, with Lipschitz constant $L < L_0$. Let $u, v$ be positive solutions of
\begin{equation*}
\left\{
\begin{array}{rcll}
\mathcal{L}u & = & f & \text{in } \Omega \cap B_1\\
u & = & 0 & \text{on } \partial\Omega \cap B_1
\end{array}
\right.
\ \text{and}\quad
\left\{
\begin{array}{rcll}
\mathcal{L}v & = & g & \text{in } \Omega \cap B_1\\
v & = & 0 & \text{on } \partial\Omega \cap B_1,
\end{array}
\right.
\end{equation*}
in the $L^n$-viscosity or the weak sense, with $f$ and $g$ satisfying (\ref{eq:f_g_fitades}).

Assume $u, v$ are normalized in the sense that $u(e_n/2) = v(e_n/2) = 1$. Then,
$$C^{-1} \leq \frac{u}{v} \leq C \quad \text{in} \quad B_{1/2},$$
and
$$\left\|\frac{u}{v}\right\|_{\mathcal{C}^{0,\alpha}(\overline{\Omega}\cap B_{1/2})} \leq C.$$
The positive constants $C$, $c_0$, $L_0$ and $\alpha$ depend only on the dimension, $q$, $\lambda$, $\Lambda$, as well as $\sigma$, when applicable.
\end{cor}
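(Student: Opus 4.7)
The plan is to deduce Corollary \ref{cor:BH_rhs_symmetric} directly from Theorem \ref{thm:BH_rhs} by applying it twice, once with the roles of $u$ and $v$ as written and once with the roles exchanged. Since both functions are positive and normalized so that $u(e_n/2) = v(e_n/2) = 1$, each application of the theorem satisfies its hypotheses: for the first application we use $v(e_n/2) \geq 1$ and the first option on $u$ (namely $u > 0$ and $u(e_n/2) \leq 1$), and for the second we just swap the letters.

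From the first application I obtain $u \leq C v$ in $B_{1/2}$ together with $\|u/v\|_{\mathcal{C}^{0,\alpha}(\overline\Omega\cap B_{1/2})} \leq C$. From the second application (with $u$ and $v$ exchanged, and with $f$ and $g$ exchanged, which still satisfy \eqref{eq:f_g_fitades}) I obtain $v \leq C u$ in $B_{1/2}$, and the Hölder estimate for $v/u$. Combining the two pointwise inequalities gives
\[
C^{-1} \leq \frac{u}{v} \leq C \quad \text{in } B_{1/2},
\]
which is the symmetric conclusion. The Hölder bound for $u/v$ is already furnished by the first application.

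The dependencies of the constants match: in Theorem \ref{thm:BH_rhs} the parameter $p$ only enters when the alternative $L^p$-normalization on $u$ is used; since in the corollary we only invoke the pointwise normalization at $e_n/2$, the constants $C$, $c_0$, $L_0$, $\alpha$ depend only on $n$, $q$, $\lambda$, $\Lambda$ and, when $\mathcal{L}$ is of the divergence type \eqref{eq:divergence_operator}, on $\sigma$. Since no step beyond invoking Theorem \ref{thm:BH_rhs} twice is needed, there is no real obstacle; the only thing to be slightly careful about is that swapping the roles of $u$ and $v$ preserves all hypotheses, which it manifestly does because the setup is symmetric in the pair $(u,f)$, $(v,g)$ once the common normalization $u(e_n/2) = v(e_n/2) = 1$ is fixed.
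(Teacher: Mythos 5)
Your proof is correct and matches the paper's intent exactly: the paper itself just notes that Corollary \ref{cor:BH_rhs_symmetric} is a direct consequence of Theorem \ref{thm:BH_rhs}, and your two applications (one with $(u,f)$ and $(v,g)$ as given, one with their roles swapped) is precisely how that deduction goes. The observation that the $p$-dependence drops out because only the pointwise normalization is used is also accurate.
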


\subsection{Applications to obstacle problems}
The boundary Harnack inequality is the technical tool that allows us to prove $\mathcal{C}^{1,\alpha}$ regularity of the free boundary once we know it is Lipschitz in the classical obstacle problem with constant right hand side \cite[Section 5.6]{FR20} and in the thin obstacle problem with zero obstacle \cite[Section 5]{Fer20}.

The functions to which we apply the boundary Harnack are derivatives of the solution to the free boundary problem. Hence, if the original free boundary problem is the classical obstacle problem,
\begin{equation}\label{eq:classical_obstacle}
\left\{
\begin{array}{rcl}
\Delta u & = & f\chi_{\{u > 0\}}\\
u & \geq & 0,
\end{array}
\right.
\end{equation}
the derivatives of $u$ are solutions of
\begin{equation*}
\left\{
\begin{array}{rcll}
\Delta (\partial_\nu u) & = & \partial_\nu f & \text{in } \{u > 0\}\\
\partial_\nu u & = & 0 & \text{on } \p\{u > 0\},
\end{array}
\right.
\end{equation*}
and we can apply the boundary Harnack of Allen and Shahgholian if $f \in W^{1,\infty}$ (Lipschitz), or our new Theorem \ref{thm:BH_rhs} if $f \in W^{1,q}$ with $q > n$.

In the fully nonlinear setting (\ref{eq:fully_nonlinear_intro}), the derivatives of the solution satisfy a linear equation in non-divergence form,
$$\mathcal{L}(\partial_\nu u) = g \quad \text{in} \quad \{u > 0\},$$
with bounded measurable coefficients $A(x)$, and then having our new boundary Harnack for non-divergence operators proves useful to deduce results on the regularity of the free boundary.

It is well known that the free boundary may exhibit singularities. Hence, we need to introduce the notion of a regular point.

\begin{defn}\label{defn:regular_point_classical}
Let $x_0$ be a free boundary point for the classical obstacle problem, i.e.\ $x_0 \in \p\{u > 0\}$ for a solution of (\ref{eq:classical_obstacle}). We say that $x_0$ is a regular free boundary point if there exists $r_k \downarrow 0$ such that
$$\frac{u(r_kx)}{r_k^2} \rightarrow \frac{\gamma}{2}(x\cdot e)_+^2 \quad \text{in } \mathcal{C}^1_{\mathrm{loc}}(\R^n)$$
for some $\gamma > 0$ and $e \in \mathbb{S}^{n-1}$.
\end{defn}

Our next application was already known by using perturbative arguments with slightly weaker assumptions  \cite{Bla01}. We include this result to illustrate the arguments that we will use in the fully nonlinear problems in a more easily readable setting.

\begin{cor}\label{cor:classical_obstacle_lq}
Let $u$ be a solution of (\ref{eq:classical_obstacle}) with $f \geq c_0 > 0$ in $W^{1,q}(B_1)$, with $q > n$, and assume the origin is a regular free boundary point in the sense of Definition \ref{defn:regular_point_classical}.

Then, the free boundary $\Gamma = \partial\{u > 0\}$ is locally a $\mathcal{C}^{1,\alpha}$ graph at $0$.
\end{cor}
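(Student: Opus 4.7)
The plan is to follow the classical Caffarelli strategy for $\mathcal{C}^{1,\alpha}$ regularity of the free boundary, using our new Theorem \ref{thm:BH_rhs} (or Corollary \ref{cor:BH_rhs_symmetric}) in place of the boundary Harnack for harmonic functions. The first step is to promote the blow-up condition in Definition \ref{defn:regular_point_classical} into quantitative Lipschitz structure: since $f \geq c_0 > 0$ and the origin is regular with blow-up direction $e$ (which we rotate to $e_n$), standard semi-convexity estimates combined with the directional monotonicity argument of Caffarelli show that $\{u > 0\} \cap B_r$ is the epigraph of a Lipschitz function $g_r$ whose Lipschitz constant $L(r)$ tends to $0$ as $r \to 0$. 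Concretely, for any $\eta > 0$ and all sufficiently small $r$, $\partial_\tau u \geq 0$ in $\{u > 0\} \cap B_r$ for every unit direction $\tau$ in a cone of opening $\pi - \eta$ around $e_n$.

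I would then rescale. Set $u_r(x) := u(rx)/r^2$ and $f_r(x) := f(rx)$, so that $u_r$ solves (\ref{eq:classical_obstacle}) with right hand side $f_r$ in $B_1$, with a Lipschitz free boundary of constant $L(r)$. The derivatives satisfy $\Delta(\partial_i u_r) = \partial_i f_r$ in $\{u_r > 0\} \cap B_1$, with zero Dirichlet data on $\partial\{u_r > 0\} \cap B_1$ (since $u \in \mathcal{C}^{1,1}$). A direct computation yields
\[
\|\partial_i f_r\|_{L^q(B_1)} = r^{1 - n/q}\|\partial_i f\|_{L^q(B_r)},
\]
which tends to $0$ as $r \to 0$ because $q > n$. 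Hence for $r$ small enough both smallness conditions of Theorem \ref{thm:BH_rhs} hold: $L(r) < L_0$ and $\|\nabla f_r\|_{L^q(B_1)} \leq c_0$.

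Now I apply the boundary Harnack to derivatives. By the cone monotonicity, $\partial_n u_r > 0$ in $\{u_r > 0\} \cap B_1$, and for any tangential direction $\tau$ and sufficiently large $M$ (depending on the cone opening), the functions $M \partial_n u_r \pm \partial_\tau u_r$ are strictly positive there. Pairing each of them with $v := \partial_n u_r$ and applying Corollary \ref{cor:BH_rhs_symmetric} (after normalizing at $e_n/2$), I obtain
\[
\left\|M \pm \frac{\partial_\tau u_r}{\partial_n u_r}\right\|_{\mathcal{C}^{0,\alpha}(\overline{\{u_r > 0\}}\cap B_{1/2})} \leq C,
\]
so the ratio $\partial_\tau u_r/\partial_n u_r$ is $\mathcal{C}^{0,\alpha}$ up to the free boundary. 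Since $u_r \geq 0$ vanishes on the free boundary, the outer unit normal is $\nu = -\nabla u_r/|\nabla u_r|$, whose tangential components are smooth functions of these ratios; hence $\nu \in \mathcal{C}^{0,\alpha}$ on $\partial\{u_r>0\} \cap B_{1/2}$, and the free boundary is locally a $\mathcal{C}^{1,\alpha}$ graph. Undoing the rescaling gives the conclusion at the origin.

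The main difficulty is the first step: quantifying that the blow-up condition at a regular point forces the Lipschitz constant $L(r)$ of $\{u > 0\} \cap B_r$ to fall below $L_0$ for all small $r$. This is a classical consequence of the nondegeneracy $f \geq c_0 > 0$ together with the $\mathcal{C}^{1,1}$ regularity of $u$, typically proved through the ACF monotonicity formula or equivalent semi-convexity arguments. Once this is in hand, the scaling of the $W^{1,q}$ seminorm with $q > n$ matches precisely the smallness hypothesis of Theorem \ref{thm:BH_rhs}, and the remainder is routine.
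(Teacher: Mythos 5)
Your overall strategy mirrors the paper's: rescale, observe that $\|\nabla f_r\|_{L^q(B_1)} = r^{1-n/q}\|\nabla f\|_{L^q(B_r)} \to 0$ precisely because $q > n$, apply the new boundary Harnack to derivatives of $u_r$, and conclude via the regularity of the unit normal. The one cosmetic difference is that you pair $M\partial_n u_r \pm \partial_\tau u_r$ with $v = \partial_n u_r$ so that both sides are positive and the symmetric Corollary \ref{cor:BH_rhs_symmetric} applies; the paper instead uses the one-sided Theorem \ref{thm:BH_rhs} directly with $w_1 = \partial_i u_r$ (not necessarily positive), which works because that theorem only requires an $L^p$ bound on the numerator. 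Both routes are valid.

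The substantive gap is in what you label ``the main difficulty'' and then dismiss as a ``classical consequence.'' The classical directional-monotonicity argument of Caffarelli, as usually presented (e.g.\ \cite{Caf98}, \cite{FR20}), uses that $f$ is Lipschitz: the positivity lemma compares $C_0\partial_\nu u - u$ against a quadratic polynomial, and the error term $\Delta(\text{comparison function})$ is controlled by $\|\nabla f\|_{L^\infty}$. When $f$ is only in $W^{1,n}$ this estimate fails pointwise, and one must instead run the argument through the ABP estimate with an $L^n$ right-hand side. Concretely, the paper's Lemma \ref{lem:classical_obstacle_derivada_positiva} shows that after rescaling to $u_r$, the relevant error terms $\|r\partial_\nu f(rx)\|_{L^n(B_1)} = \|\partial_\nu f\|_{L^n(B_r)}$ and, via Poincar\'e, $\|f(rx)-f(rx_0)\|_{L^n(B_1)} \leq C\|\nabla f\|_{L^n(B_r)}$ both vanish as $r \to 0$, which is exactly what lets ABP close the maximum-principle argument. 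This is where the $W^{1,n}$ regularity is genuinely spent, and it is not literally ``classical''; your proposal should either carry out this computation or at least acknowledge that the positivity lemma has to be reproved with $L^n$ bookkeeping rather than $L^\infty$ bookkeeping. Your reference to the ACF monotonicity formula would also need checking in this regularity class and is not the route the paper takes.
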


The fully nonlinear obstacle problem can be presented in at least two different formulations. The following one was studied by Lee in \cite{Lee98}.
\begin{equation}\label{eq:fully_nonlinear_thick_phi_informal}
    \left\{
    \begin{array}{rcl}
    F(D^2v) & \leq & 0\\
    v & \geq & \varphi\\
    F(D^2v) & = & 0 \quad \text{in} \quad \{v > \varphi\}.
    \end{array}
    \right.
\end{equation}
Here, we impose the following conditions:
\begin{itemize}
    \item $F$ is uniformly elliptic.
    \item $F(D^2\varphi) \leq -\tau_0 < 0$.
    \item $\varphi \in \mathcal{C}^\infty$.
\end{itemize}
Then, under these hypotheses, $v \in \mathcal{C}^{1,1}$ and the free boundary is $\mathcal{C}^{1,\alpha}$ at regular points. For our purposes, we will say a free boundary point is regular in the sense of Definition \ref{defn:regular_point_classical}, as in the classical obstacle problem.

More generally, one can study problems of the form
\begin{equation}\label{eq:fully_nonlinear}
\begin{cases}
F(D^2u, x) = f\chi_{\{u > 0\}}\\
u \geq 0.
\end{cases}
\end{equation}
This is a generalization of problem (\ref{eq:fully_nonlinear_thick_phi_informal}). Indeed, if we define $u = v - \varphi$, then
$$\tilde{F}(D^2u,x) := F(D^2u + D^2\varphi) - F(D^2\varphi) = -F(D^2\varphi) =: f(x) \quad \text{in} \quad \{u > 0\}.$$

This fully nonlinear obstacle problem (and more general ones without the sign condition on $u$) has been further studied by Lee, Shahgholian, Figalli, and more recently by Indrei and Minne in \cite{LS01,FS14, IM16}. They proved that if $F$ is convex, $f$ is Lipschitz and $f \geq \tau_0 > 0$, the free boundary $\partial\Omega$ is $\mathcal{C}^1$ at regular points.

As a consequence of our new boundary Harnack inequality, we extend their result for (\ref{eq:fully_nonlinear}) in two ways. We lower the Lipschitz regularity required for $f$ to $W^{1,q}$ with $q > n$, and we prove $\mathcal{C}^{1,\alpha}$ regularity of the free boundary instead of $\mathcal{C}^1$.

\begin{cor}\label{cor:fully_nonlinear_thick}
Let $u$ be a solution of (\ref{eq:fully_nonlinear}). Assume as well:
\begin{itemize}
    \item[\textit{(H1)}] $F$ is uniformly elliptic and $F(0,x) = 0$ for all $x \in \Omega$.
    \item[\textit{(H2)}] $F$ is convex and $\mathcal{C}^1$ in the first variable, and $W^{1,q}$ in the second variable for some $q > n$.
    \item[\textit{(H3)}] $f \in W^{1,q}$ for some $q > n$, and $f \geq \tau_0 > 0$.
\end{itemize}

Then, if the origin is a regular free boundary point in the sense of Definition \ref{defn:regular_point_classical}, the free boundary is a $\mathcal{C}^{1,\alpha}$ graph in $B_r$ for some small $r > 0$ and $\alpha > 0$.
\end{cor}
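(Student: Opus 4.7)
The plan is to follow the classical strategy of Caffarelli for the obstacle problem: at a regular point, first verify that $\{u>0\}$ is a Lipschitz graph with small Lipschitz constant, and then upgrade this to $\mathcal{C}^{1,\alpha}$ by applying a boundary Harnack inequality to directional derivatives of $u$. The use of our new Theorem \ref{thm:BH_rhs} is what makes this work in the fully nonlinear setting, since after differentiating $F$ one only controls a non-divergence-form linear equation with bounded measurable coefficients and an $L^q$ right hand side.

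Under hypotheses (H1)--(H3), $u\in\mathcal{C}^{1,1}_{\mathrm{loc}}$ by the results of Lee--Shahgholian, Figalli--Shahgholian and Indrei--Minne. Rescale $u_r(x):=u(rx)/r^2$. Combining the $\mathcal{C}^{1,1}$ bound, non-degeneracy, and Definition \ref{defn:regular_point_classical}, a standard compactness argument yields, up to a rotation, $u_r\to \tfrac{\gamma}{2}(x\cdot e_n)_+^2$ in $\mathcal{C}^1_{\mathrm{loc}}$ as $r\downarrow 0$. Hence, for $r$ small and every direction $\tau$ in a narrow cone $\Sigma$ around $e_n$, $\partial_\tau u_r>0$ in $\Omega_r:=\{u_r>0\}\cap B_1$, which in turn forces $\Omega_r$ to be the epigraph of a Lipschitz function with Lipschitz constant $L(r)\to 0$. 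This matches the geometric setup of Theorem \ref{thm:BH_rhs}.

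Next I show that each derivative $w:=\partial_{e_k}u$ satisfies a linear equation of the type treated in Theorem \ref{thm:BH_rhs}. Differentiating $F(D^2u,x)=f$ using (H2) gives
\[
    \operatorname{Tr}\!\bigl(F_M(D^2u(x),x)\,D^2w(x)\bigr) = \partial_{x_k} f(x) - (\partial_{x_k}F)(D^2u(x),x)=:h(x),
\]
where the matrix $A(x):=F_M(D^2u(x),x)$ is bounded and uniformly elliptic by (H1)--(H2) (with no regularity beyond measurability), and $h\in L^q_{\mathrm{loc}}$ because $D^2u\in L^\infty_{\mathrm{loc}}$ and $F$ is $W^{1,q}$ in $x$. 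Setting $w_r(x):=r^{-1}w(rx)$, the rescaled equation reads $\operatorname{Tr}(A(rx)D^2w_r(x))=r\,h(rx)$, with
\[
    \|r\,h(r\cdot)\|_{L^q(B_1)} = r^{\,1-n/q}\|h\|_{L^q(B_r)} \longrightarrow 0 \quad\text{as } r\to 0,
\]
since $q>n$. This is the scaling identity that makes the assumption $q>n$ decisive: it simultaneously shrinks the $L^q$ size of the right hand side in every linearized equation to below $c_0$.

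Fixing any pair $\tau,\tau'\in\Sigma$, both $\partial_\tau u_r$ and $\partial_{\tau'}u_r$ are positive in $\Omega_r$ and satisfy the rescaled equation with coefficients bounded and uniformly elliptic and right hand sides of $L^q$-norm below $c_0$. After normalization at $e_n/2$, Corollary \ref{cor:BH_rhs_symmetric} applies in $\Omega_r\cap B_1$ (now a Lipschitz domain with constant $<L_0$) and yields
\[
    \left\|\frac{\partial_\tau u_r}{\partial_{\tau'} u_r}\right\|_{\mathcal{C}^{0,\alpha}(\overline{\Omega_r}\cap B_{1/2})} \leq C.
\]
Taking $\tau'=e_n$ and letting $\tau$ range over an orthonormal basis, the ratios $\partial_i u_r/\partial_{e_n}u_r$ extend $\mathcal{C}^{0,\alpha}$-continuously to $\partial\Omega_r$; since along the graph $x_n=g_r(x')$ parametrizing $\partial\Omega_r$ these ratios coincide with $-\partial_i g_r$, we obtain $g_r\in\mathcal{C}^{1,\alpha}$, i.e.\ the free boundary is $\mathcal{C}^{1,\alpha}$ at the origin. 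I expect the main obstacle to be the careful verification in the Lipschitz-graph step that the $\mathcal{C}^1$ closeness of $u_r$ to the half-space profile really forces $L(r)<L_0$ quantitatively, together with checking that the normalization requirements $v(e_n/2)\geq 1$ of Theorem \ref{thm:BH_rhs} can be met uniformly in $r$; once those quantitative details are in place, the new boundary Harnack does all the rest.
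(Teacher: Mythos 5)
Your overall strategy matches the paper's: establish that $\{u>0\}$ is Lipschitz with arbitrarily small constant near a regular point, differentiate the fully nonlinear equation to get a non-divergence linear equation with $L^q$ right-hand side and bounded measurable coefficients, verify the right-hand side shrinks under the rescaling $w_r(x)=r^{-1}w(rx)$ because $q>n$, and then invoke Theorem~\ref{thm:BH_rhs} (or its symmetric version) for the derivative quotients. The last three steps are carried out correctly and coincide with the paper's proof.

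There is, however, a genuine gap in the Lipschitz step, and it is not merely a ``quantitative detail'' as you suggest at the end. The $\mathcal{C}^1$ convergence $u_{r_k}\to\frac{\gamma}{2}(x\cdot e)_+^2$ only gives $\partial_\tau u_{r_k}\to\gamma(x\cdot e)_+(\tau\cdot e)$, which \emph{vanishes} on $\{x\cdot e=0\}$; so near the free boundary the convergence yields at best $\partial_\tau u_{r_k}>-\varepsilon$, not positivity. Passing from ``$\partial_\tau u_r>-\varepsilon$ globally'' to ``$\partial_\tau u_r\geq 0$ in $\Omega_r\cap B_{1/2}$'' is where the real work is, and it cannot be done by compactness alone. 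The paper proves this as Lemma~\ref{lem:indrei_minne_modificat}, a maximum-principle/barrier argument adapted from \cite{IM16}: one works with $C_0\partial_\nu u - u$ (rather than $\partial_\nu u$ alone), exploits the convexity of $F$ to define measurable coefficients $a_{ij}(x)\in\partial_1 F(D^2u(x),rx)$ and to derive the one-sided viscosity inequality $a_{ij}\partial_{ij}(\partial_\nu u)\leq r\partial_\nu f - r\partial_{2,\nu}F$, adds the barrier $\tau_0|x-y_0|^2/(4n\Lambda)$ (using $f\geq\tau_0>0$ critically), and applies the ABP estimate with the $W^{1,n}$ hypotheses on $f$ and $F$ to make the error vanish as $r\to 0$. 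None of the structural hypotheses (convexity of $F$, the sign condition $f\geq\tau_0$, the $W^{1,q}$ regularity of $F$ in $x$) enter your Lipschitz step as written, yet all are essential there. Without such a positivity lemma, the assertion that $\Omega_r$ is a Lipschitz epigraph with $L(r)<L_0$ is unproven.

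A second, smaller point: Definition~\ref{defn:regular_point_classical} only furnishes convergence along some subsequence $r_k\downarrow 0$; your reasoning should be phrased along that subsequence, or you must separately argue uniqueness of the blow-up, as the paper quietly does by fixing a suitable $r_k$ and working at a single scale $\rho$.
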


\subsection{Thin obstacle problems}
The thin obstacle problem, also known as the Signorini problem, is a classical free boundary problem that admits several formulations, see \cite{Fer20} for a nice introduction to the topic. One can write the problem as the following, given an obstacle $\varphi$ defined on $\{x_n = 0\}$:
\begin{equation}\label{eq:classical_signorini}
\left\{
\begin{array}{rcll}
\Delta v & \leq & 0 & \text{in } B_1\\
v & \geq & \varphi & \text{on } B_1 \cap \{x_n = 0\}\\
\Delta v & = & 0 & \text{in } B_1 \setminus \{(x',0) : v(x',0) = \varphi(x')\}.
\end{array}
\right.
\end{equation}

The first results on regularity of the solution $v$ were established in the seventies, in particular it was proved in \cite{Caf79} that $v \in \mathcal{C}^{1,\alpha}$ for a small $\alpha > 0$. Free boundary regularity remained open for quite some time, until the first free boundary regularity result, \cite{ACS08}, establishing that the free boundary is $\mathcal{C}^{1,\alpha}$ at \textit{regular} points when $\varphi \equiv 0$. Further results have been obtained in \cite{KPS15, DS16} among others, proving that the free boundary is real analytic at regular points provided that $\varphi$ is analytic.

Consider now the fully nonlinear thin obstacle problem.
\begin{equation}\label{eq:fully_nonlinear_signorini}
\left\{
\begin{array}{rcll}
F(D^2v) & \leq & 0 & \text{in } B_1\\
v & \geq & \varphi & \text{on } B_1 \cap \{x_n = 0\}\\
F(D^2v) & = & 0 & \text{in } B_1 \setminus \{(x',0) : v(x',0) = \varphi(x')\},
\end{array}
\right.
\end{equation}
where $F$ is uniformly elliptic, convex and $F(0) = 0$.

Milakis and Silvestre proved in \cite{MS08} that solutions $u$ are $\mathcal{C}^{1,\alpha}$ in the symmetric case (even functions with respect to $x_n$). More recently, Fernández-Real extended the result to the non-symmetric case in \cite{Fer16}. The first result on free boundary regularity is due to the first author and Serra \cite{RS16}, where they proved the $\mathcal{C}^1$ regularity of the free boundary near regular points. Here, we will prove for the first time that the free boundary is actually $\mathcal{C}^{1,\alpha}$.

To do this, we need to adapt Theorem \ref{thm:BH_rhs} to the case of slit domains. We present here a simplified version, see Section \ref{sect:slit} for a more general result.

\begin{thm}\label{thm:BH_rhs_slit_intro_hiperpla}
Let $q > n$ and let $\mathcal{L}$ be as in (\ref{eq:non-divergence_operator}). There exists small $c_0 > 0$ such that the following holds.

Let $\Omega = B_1 \setminus K$ with $K$ a closed subset of $\{x_n = 0\}$. Let 
$$\Omega^+ = \Omega \cap \{x_n \geq 0\} \quad \text{and} \quad \Omega^- = \Omega \cap \{x_n \leq 0\}.$$
Let $u$ and $v > 0$ be $L^n$-viscosity solutions of
\begin{equation*}
\left\{
\begin{array}{rcll}
\mathcal{L}u & = & f & \text{in } B_1 \setminus K\\
u & = & 0 & \text{on } K
\end{array}
\right.
\ \text{and}\quad
\left\{
\begin{array}{rcll}
\mathcal{L}v & = & g & \text{in } B_1 \setminus K\\
v & = & 0 & \text{on } K,
\end{array}
\right.
\end{equation*}
with $f$ and $g$ satisfying (\ref{eq:f_g_fitades}). Assume in addition that $v(e_n/2) \geq 1$, $v(-e_n/2) \geq 1$, and either $u > 0$ in $B_1 \setminus K$ and $\max\{u(e_n/2),u(-e_n/2)\} \leq 1$, or $\|u\|_{L^p(B_1)} \leq 1$ for some $p > 0$. Then,
$$u \leq Cv \quad \text{in} \quad B_{1/2} \setminus K,$$
and
$$\left\|\frac{u}{v}\right\|_{\mathcal{C}^{0,\alpha}(\overline{\Omega^\pm}\cap B_{1/2})} \leq C.$$
The positive constants $C$, $c_0$, and $\alpha$ depend only on the dimension, $q$, $\lambda$, $\Lambda$, as well as $p$, when applicable.
\end{thm}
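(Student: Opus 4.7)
The plan is to follow the scheme used to prove Theorem \ref{thm:BH_rhs}, based on the scaling and comparison arguments of \cite{DS20}, with the Lipschitz graph in the hypothesis replaced by the flat closed set $K\subset\{x_n=0\}$. The two hypotheses $v(e_n/2)\geq 1$ and $v(-e_n/2)\geq 1$ naturally suggest running the argument once on each side $\Omega^\pm$ and then combining the resulting estimates on $B_{1/2}\setminus K$.

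The main step is the $L^\infty$ comparison $u\leq Cv$. Fix $x_0\in K\cap B_{1/2}$ and work on $\Omega^+\cap B_r(x_0)$. The DS20-type argument compares $v$ to an explicit barrier that vanishes on the boundary piece and whose $\mathcal{L}$-action is controlled; in the slit setting the natural choice is the half-space profile $x_n^{1/2}$, which vanishes on the full hyperplane $\{x_n=0\}\supset K$ (the inequality $v\geq c\,x_n^{1/2}$ is trivially satisfied on $\{x_n=0\}\setminus K$, where $x_n^{1/2}=0$ and $v\geq 0$) and is harmonic for the Laplacian in $\{x_n>0\}$. For general $\mathcal{L}$ as in (\ref{eq:non-divergence_operator}) it is an admissible subsolution up to error controlled by $\|f\|_{L^q},\|g\|_{L^q}\leq c_0$ via the Aleksandrov--Bakel'man--Pucci estimate (this is where $q>n$ enters). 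Propagating $v(e_n/2)\geq 1$ into $\Omega^+$ by interior Harnack and comparing yields $v\geq c\,x_n^{1/2}$ near $K$ in $\Omega^+$, and symmetrically $v\geq c\,|x_n|^{1/2}$ in $\Omega^-$ using $v(-e_n/2)\geq 1$. Combined with the interior $L^\infty$ bound on $u$ coming from either $\max\{u(\pm e_n/2)\}\leq 1$ (plus $u>0$) or $\|u\|_{L^p(B_1)}\leq 1$, this delivers $u\leq Cv$ in $B_{1/2}\setminus K$.

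Given the $L^\infty$ bound, the H\"older estimate for $u/v$ on each $\overline{\Omega^\pm}\cap B_{1/2}$ follows from the standard oscillation-decay iteration. At a boundary point $x_0\in K$ and dyadic scale $r_k=2^{-k}$, rescale so that the rescaled functions still satisfy the hypotheses of the theorem (the smallness of $\|f\|_{L^q},\|g\|_{L^q}$ is preserved under rescaling precisely because $q>n$), and apply the $L^\infty$ bound to $Mv-u$ and $u-mv$ with $M=\sup_{B_{r_k}}(u/v)$ and $m=\inf_{B_{r_k}}(u/v)$. This yields $\operatorname{osc}_{B_{r_{k+1}}}(u/v)\leq\theta\,\operatorname{osc}_{B_{r_k}}(u/v)$ for some fixed $\theta<1$, from which the H\"older bound follows by iteration.

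The main obstacle is the barrier step: $K$ is an arbitrary closed subset of $\{x_n=0\}$ with no assumed regularity, so one cannot flatten it locally. What saves the argument is the flatness of the ambient hyperplane $\{x_n=0\}$, which allows the \emph{universal} subsolution $x_n^{1/2}$ to work on each side $\Omega^\pm$ uniformly in the geometry of $K$. The secondary new point compared to Theorem \ref{thm:BH_rhs} is that the solutions $u,v$ are not required to vanish on $\{x_n=0\}\setminus K$; this is precisely why the argument must be carried out on $\Omega^+$ and $\Omega^-$ separately, and why the hypothesis demands both normalizations $v(\pm e_n/2)\geq 1$ to glue the two one-sided bounds into an estimate on $B_{1/2}\setminus K$.
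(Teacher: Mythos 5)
There is a genuine gap in the barrier step, and the claimed lower bound on $v$ is false.

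You propose to compare $v$ against $x_n^{1/2}$ and conclude $v\geq c\,x_n^{1/2}$ on $\Omega^+$. There are three problems. First, $x_n^{1/2}$ is not harmonic in $\{x_n>0\}$: $\Delta(x_n^{1/2})=-\tfrac14 x_n^{-3/2}<0$, so it is strictly \emph{superharmonic}, hence a supersolution, and thus it cannot serve as a lower barrier by comparison. (You may be thinking of the genuine slit-plane profile $\operatorname{Re}\bigl((x_{n-1}+ix_n)^{1/2}\bigr)$, which is harmonic but is not a function of $x_n$ alone, and whose trace on $\{x_n=0\}$ vanishes only on a half-hyperplane, not on an arbitrary $K$.) Second, and more importantly, the inequality $v\geq c\,x_n^{1/2}$ is simply false in general: take $K=\{x_n=0\}\cap B_1$ and $v(x)=x_n$ on $\Omega^+$; then $v$ is harmonic, positive, and vanishes on $K$, yet $x_n\geq c\,x_n^{1/2}$ fails for all small $x_n$. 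Near interior portions of $K$ the correct behaviour of $v$ is linear (Hopf), which lies strictly below $x_n^{1/2}$; the $d^{1/2}$ rate is the growth at the \emph{edge} of $K$, not a lower bound from the slit hyperplane. Third, the exponent $1/2$ clashes with the scaling that makes the $L^q$ right-hand side with $q>n$ subcritical: the iteration $f_{j+1}(x)=\rho^{2-\kappa}f_j(\rho x)$ preserves the smallness of the $L^q$ norm only when $\kappa<2-n/q$, and the relevant exponent is $\kappa$ slightly larger than $1$, never $1/2$.

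What the paper actually does is run the same cone-subsolution machinery as Theorem \ref{thm:BH_rhs} (Lemma \ref{lem:BH_subsolution} gives a subsolution growing like $t^\beta$ with $\beta>1$; Lemma \ref{lem:BH_rhs_iteracio}/\ref{lem:iteration_slit_domains} iterates), with the growth of $v$ measured in terms of $d(x,\Gamma)^\kappa$ for $\kappa\in(1,2-n/q)$, where $\Gamma$ is the whole hyperplane, not $K$ — this is precisely the conservative lower bound that holds regardless of the geometry of $K$. Your framing ``run the argument once on each side and glue'' also misses the key technical adaptation in Lemma \ref{lem:iteration_slit_domains}: the measure estimate used to control $u$ near the slit picks the point $z=x_0-\tfrac{5\delta}{2}e_n$ on the \emph{opposite} side of the hyperplane, exploiting that $u$ is already bounded below by $\rho^\kappa$ there. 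The two normalizations $v(\pm e_n/2)\geq 1$ enter to feed this two-sided iteration, not merely to run two independent one-sided arguments. Your outline of the H\"older step (oscillation decay with $w_i=Mv-u$, $u-mv$ and dyadic rescaling) is structurally correct, but it relies on the missing growth estimate $v\geq c\,d(x,\Gamma)^\kappa$ at the correct power, so the gap above propagates there as well.
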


Using this new boundary Harnack, we can prove the following.

\begin{cor}\label{cor:fully_nonlinear_thin}
Assume that $0$ is a regular free boundary point for (\ref{eq:fully_nonlinear_signorini}) in the sense of \cite{RS16}, with $F \in \mathcal{C}^1$ and $\varphi \in W^{3,q}$ for some $q > n$. Then, there exists $\rho > 0$ such that the free boundary is a $\mathcal{C}^{1,\alpha}$ graph in $B_\rho \cap \{x_n = 0\}$.
\end{cor}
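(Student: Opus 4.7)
The plan is to follow the classical strategy for free boundary regularity in the thin obstacle problem (see \cite{ACS08,RS16,Fer20}): reduce the $\mathcal{C}^{1,\alpha}$ regularity of the free boundary to a boundary Harnack estimate for \emph{positive tangential derivatives} of the solution in the slit domain $B_1\setminus\Lambda$, where $\Lambda\subset\{x_n=0\}$ is the contact set. The $\mathcal{C}^1$ regularity already obtained in \cite{RS16} supplies an open cone of admissible tangential directions, and the new slit-domain boundary Harnack with right hand side, Theorem \ref{thm:BH_rhs_slit_intro_hiperpla}, upgrades this to Hölder control on ratios of such derivatives, which translates directly into $\mathcal{C}^{1,\alpha}$ regularity of the free boundary.

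I first extend $\varphi$ to a $W^{3,q}(B_1)$ function on $\mathbb{R}^n$ and set $u:=v-\varphi$, so that $u\ge 0$ on $\{x_n=0\}$ and $\Lambda=\{u=0\}\cap\{x_n=0\}$. Then $u$ satisfies $\tilde F(D^2u,x):=F(D^2u+D^2\varphi(x))=0$ in $B_1\setminus\Lambda$, with $\tilde F$ uniformly elliptic and $\mathcal{C}^1$ in $M$, $W^{1,q}$ in $x$. By \cite{RS16}, after a rotation we may assume the inward normal to $\Lambda$ at $0$ is $e_{n-1}$, and there is an open cone $\mathcal{C}$ of tangential directions $e\in\{x_n=0\}$, close to $e_{n-1}$, for which the derivative $w_e:=\partial_e u$ satisfies $w_e\ge 0$ in some ball $B_{r_0}$ and $w_e>0$ in $B_{r_0}\setminus\Lambda$. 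Differentiating the equation for $u$ along $e$ (rigorously, via incremental quotients $(u(\cdot+\tau e)-u)/\tau$ and stability of $L^n$-viscosity solutions), $w_e$ turns out to be an $L^n$-viscosity solution of
\[\mathcal{L}w_e=g_e\ \text{in } B_{r_0}\setminus\Lambda,\qquad w_e=0\ \text{on } \Lambda,\]
where $\mathcal{L}=\operatorname{Tr}(\tilde F_M(D^2u,\cdot)\,D^2\,\cdot\,)$ is of the form (\ref{eq:non-divergence_operator}) and $g_e\in L^q$ thanks to $\varphi\in W^{3,q}$ and $F\in\mathcal{C}^1$.

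Next I blow up at the regular point. Using the optimal $\mathcal{C}^{1,1/2}$ regularity and non-degeneracy at regular points (\cite{MS08,Fer16,RS16}), the rescalings $u_\rho(x):=\rho^{-3/2}u(\rho x)$ converge in $\mathcal{C}^{1,\beta}_{\mathrm{loc}}$ to the thin obstacle half-space profile, and $w_{e,\rho}(x):=\rho^{-1/2}w_e(\rho x)$ solves an equation of the same type in $B_1$ with right hand side $g_{e,\rho}$ satisfying $\|g_{e,\rho}\|_{L^q(B_1)}\le\rho^{3/2-n/q}\|g_e\|_{L^q(B_1)}$. Since $q>n$, this is $\le c_0$ for $\rho$ small, and convergence to the blow-up gives $w_{e,\rho}(\pm e_n/2)\approx\partial_e u_0(\pm e_n/2)>0$; after normalization the hypotheses of Theorem \ref{thm:BH_rhs_slit_intro_hiperpla} are met. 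Applying it to $w_{e,\rho}$ and $w_{e_{n-1},\rho}$ for $e\in\mathcal{C}$ yields
\[\left\|\frac{w_{e,\rho}}{w_{e_{n-1},\rho}}\right\|_{\mathcal{C}^{0,\alpha}(\overline{\Omega^\pm}\cap B_{1/2})}\le C.\]
Since the unit normal $\nu(x_0)$ to $\partial\Lambda$ at each $x_0\in\partial\Lambda$ is encoded, modulo a positive constant, by the limiting values at $x_0$ of such ratios (as one sees from differentiating the blow-up $\mathrm{Re}((x\cdot\nu+i|x_n|)^{3/2})$), this gives Hölder continuity of $\nu$ along $\partial\Lambda$, hence the free boundary is a $\mathcal{C}^{1,\alpha}$ graph in $B_\rho\cap\{x_n=0\}$.

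The hard part will be the rigorous justification of the linearized equation for $w_e$: since $u$ is only $\mathcal{C}^{1,1/2}$, there is no \emph{a priori} third-order regularity, so the equation for $w_e$ must be obtained as a limit of those satisfied by the incremental quotients $u^\tau$, whose right hand sides remain uniformly bounded in $L^q$ by the $W^{1,q}$ regularity of $D^2\varphi$ and the $\mathcal{C}^1$ dependence of $F$ on $M$. A secondary technical point is checking the hypotheses of Theorem \ref{thm:BH_rhs_slit_intro_hiperpla} uniformly after rescaling: positivity of $w_{e,\rho}$ in the slit domain is preserved by $\mathcal{C}^{1,\beta}$ convergence to a blow-up whose tangential derivatives in every direction of $\mathcal{C}$ are strictly positive outside $\{x_n=0\}$, and the rescaled contact set is automatically a closed subset of $\{x_n=0\}$ since this hyperplane is the ambient space where $\Lambda$ lives.
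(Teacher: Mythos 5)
Your proposal follows essentially the same strategy as the paper: pass to $u=v-\varphi$, use the cone of monotone tangential directions from \cite{RS16}, differentiate the fully nonlinear equation to get a linear non-divergence equation for $\partial_e u$ with right hand side $-\operatorname{Tr}\!\left(A\,D^2\partial_e\varphi\right)\in L^q$, rescale so that the right hand side becomes small and the vertical values $w(\pm e_n/2)$ are bounded below, apply the slit-domain boundary Harnack, and read off $\mathcal{C}^{1,\alpha}$ regularity of the free boundary from the ratio $\partial_i u/\partial_{n-1}u$ as in the classical obstacle case.

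The one place you over-complicate is what you call ``the hard part.'' There is no need for incremental quotients: the $\mathcal{C}^{1,1/2}$ ceiling only holds up to the contact set, whereas the equation for $w_e$ is needed only in the \emph{interior} of the slit domain $B_1\setminus\Lambda$, where $F(D^2v)=0$ is a homogeneous fully nonlinear equation with $F\in\mathcal{C}^1$ convex. Evans--Krylov plus a Calder\'on--Zygmund bootstrap gives $v\in W^{3,p}_{\mathrm{loc}}(B_1\setminus\Lambda)$ for every $p<\infty$, so the linearized equation $\operatorname{Tr}\!\left(F_{ij}(D^2v)D^2\partial_e v\right)=0$ holds pointwise a.e.\ in $\Omega$, and $\mathcal{L}w_e=-\mathcal{L}(\partial_e\varphi)$ with the right hand side in $L^q(B_1)$ by $\varphi\in W^{3,q}$ and boundedness of the coefficients. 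This is exactly the paper's argument. The other minor difference is your normalization $\rho^{-3/2},\ \rho^{-1/2}$ versus the paper's simpler $s^{-1},\ s^{-2}$ scaling (which lets one use $\tilde w_2(e_{n-1}/2)\to\infty$ and then interior Harnack plus ABP to get $\tilde w_2(\pm e_n/2)\ge 1$), but both give the needed smallness $s^{1-n/q}\to0$ of the rescaled right hand side and the required lower bounds, so either works.
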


This is new, even when $\varphi \in \mathcal{C}^\infty$. The higher regularity of the free boundary remains a challenging open question.

\subsection{Plan of the paper}
The paper is organized as follows.

In Section \ref{sect:preliminaries}, we recall some classical results and tools, such as the ABP estimate and the weak Harnack inequality. Then, in Section \ref{sect:main} we prove our new boundary Harnack inequality for elliptic equations with right hand side, Theorem \ref{thm:BH_rhs}, by scaling and barrier arguments. Section \ref{sect:slit} is devoted to adapting the result to slit domains. In Section \ref{sect:applications}, we prove the $\mathcal{C}^{1,\alpha}$ regularity of the free boundary in the fully nonlinear obstacle problem, Corollary \ref{cor:fully_nonlinear_thick}, and in the fully nonlinear thin obstacle problem, Corollary \ref{cor:fully_nonlinear_thin}. Finally, in Section \ref{sect:counterexample}, we present two counterexamples that show the sharpness of our new boundary Harnack and in Section \ref{sect:hopf} we introduce a Hopf lemma for equations with right hand side.

\section{Preliminaries}\label{sect:preliminaries}
In this section we recall some classical tools and results that will be used throughout the paper. We will denote
$$\mathcal{M^-}(D^2u) := \inf\limits_{\lambda I \leq A \leq \Lambda I}\operatorname{Tr}(AD^2u), \quad \mathcal{M^+}(D^2u) := \sup\limits_{\lambda I \leq A \leq \Lambda I}\operatorname{Tr}(AD^2u)$$
the Pucci extremal operators, see \cite{CC95} or \cite{FR20} for their properties. 

\subsection{$L^n$-viscosity and weak solutions}
In this work we are considering linear elliptic equations of the form $\mathcal{L}u = f$, with $f \in L^q$, with $q \geq n$. The most appropriate notion of solutions for a divergence form equation are the well-known weak solutions.

For the non-divergence form case, one could consider \textit{strong} ($W^{2,n}_{\mathrm{loc}}$, solving the PDE in the a.e.\ sense) solutions, but all the arguments of the proof are equally viable for $L^n$-viscosity solutions, which are more general. We present the minimal definition for the linear case.

\begin{defn}[\cite{CCKS96}]\label{defn:Ln-viscosity}
Let $u \in \mathcal{C}(\Omega)$, $f \in L^n_{\mathrm{loc}}(\Omega)$ and $\mathcal{L}$ in non-divergence form. We say $u$ is a $L^n$-viscosity subsolution (resp. supersolution) if, for all ${\varphi \in W^{2,n}_{\mathrm{loc}}(\Omega)}$ such that $u - \varphi$ has a local maximum (resp. minimum) at $x_0$,
$$\operatorname{ess}\liminf\limits_{x \rightarrow x_0} \mathcal{L}\varphi - f \leq 0$$
$$(\text{resp.} \ \operatorname{ess}\limsup\limits_{x \rightarrow x_0} \mathcal{L}\varphi - f \geq 0).$$

We will say equivalently that $u$ is a solution of $\L u \leq (\geq) f$. When $u$ is both a subsolution and a supersolution, we say $u$ is a solution and write $\L u = f$.
\end{defn}

$L^n$-viscosity solutions coincide with strong, viscosity or even classical solutions when they have the required regularity, and satisfy the maximum and comparison principles, but are more flexible, for example, allowing to compute limits under some reasonable hypotheses, and are thus preferred in some contexts.

Throughout this paper, the Dirichlet boundary conditions must be understood in the pointwise sense when we are dealing with $L^n$-viscosity solutions, and in the trace sense when we are dealing with weak solutions.

\subsection{Interior estimates}
The Alexandroff-Bakelmann-Pucci estimate is one of the main tools in regularity theory for non-divergence form elliptic equations. We refer to \cite[Theorem 3.2]{CC95} and \cite[Proposition 3.3]{CCKS96} for the full details and a proof.

\begin{thm}[ABP Estimate]\label{thm:ABP}
Assume that $\Omega \subset \R^n$ is a bounded domain. Let $\L$ be a non-divergence form operator as in (\ref{eq:non-divergence_operator}) and let $u \in \mathcal{C}(\overline{\Omega})$ satisfy $\mathcal{L}u \geq f$ in the $L^n$-viscosity sense, with $f \in L^n(\Omega)$. Assume that $u$ is bounded on $\partial\Omega$.

Then,
$$\sup\limits_\Omega u \leq \sup\limits_{\p\Omega} u + C\operatorname{diam}(\Omega)\|f\|_{L^n(\Omega)}$$
with $C$ only depending on the dimension, $\lambda$ and $\Lambda$.
\end{thm}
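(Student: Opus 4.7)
The plan is to run the classical Alexandrov--Bakelman--Pucci argument based on the upper concave envelope, extending it from strong to $L^n$-viscosity solutions as in \cite{CCKS96}. First I would reduce to a clean setup: subtracting $\sup_{\p\Omega} u$, assume $u \leq 0$ on $\p\Omega$ and set $M := \sup_\Omega u$, which may be assumed positive. Let $d = \operatorname{diam}(\Omega)$, fix a ball $B = B_{2d}(x_0)$ containing $\Omega$, extend $u^+$ by zero to $B$ to obtain a continuous $w$, and let $\Gamma$ be the upper concave envelope of $w$ in $B$, i.e.\ the infimum of all affine functions dominating $w$. Then $\Gamma$ is concave (hence locally Lipschitz and twice differentiable a.e.\ by Alexandrov's theorem), $\Gamma \geq w$ in $B$, and the contact set $\{\Gamma = w\}$ is compactly contained in $\{u > 0\} \subset \Omega$.

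The second step is the geometric covering $\nabla\Gamma(\{\Gamma = w\}) \supseteq B_{M/(2d)}(0)$: for $p$ with $|p| \leq M/(2d)$, the affine function $L_p(x) = M + p\cdot(x - x_M)$, where $u(x_M)=M$, satisfies $L_p \geq 0 = w$ on $\p B$, so sliding it downward until it first touches $w$ produces a contact point with gradient exactly $p$. The area formula for the Lipschitz map $\nabla\Gamma$ then gives
$$c_n \bigl(M/(2d)\bigr)^n = \bigl|B_{M/(2d)}(0)\bigr| \leq \int_{\{\Gamma = w\}} \bigl|\det D^2\Gamma(x)\bigr|\, dx.$$

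The heart of the proof is the pointwise bound $|\det D^2\Gamma(x)| \leq (n\lambda)^{-n}|f(x)|^n$ at almost every contact point. At such a point, $\Gamma$ touches $u$ from above with $\Gamma(x) = u(x) > 0$ and $-D^2\Gamma(x) \geq 0$; feeding $\Gamma$ (suitably approximated by $W^{2,n}$ test functions) into the subsolution inequality yields $\operatorname{Tr}(A(x) D^2\Gamma(x)) \geq f(x)$ a.e.\ on $\{\Gamma = w\}$. Since $A \geq \lambda I$ and $-D^2\Gamma \geq 0$, matrix AM--GM gives
$$\lambda \bigl(\det(-D^2\Gamma)\bigr)^{1/n} \leq \bigl(\det A \cdot \det(-D^2\Gamma)\bigr)^{1/n} \leq \tfrac{1}{n}\operatorname{Tr}\bigl(A(-D^2\Gamma)\bigr) \leq \tfrac{|f(x)|}{n}.$$
Raising to the $n$th power, integrating, and combining with the covering estimate yields $M \leq Cd\|f\|_{L^n(\Omega)}$ with $C = C(n,\lambda,\Lambda)$.

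The main obstacle is the contact-point inequality in the $L^n$-viscosity framework: $\Gamma$ is only concave and need not lie in $W^{2,n}_{\mathrm{loc}}$, so it cannot be plugged directly into Definition~\ref{defn:Ln-viscosity}. One resolves this either by a sup-convolution regularization of $u$ (whose regularizations are semiconcave and satisfy a slightly perturbed inequality) followed by passage to the limit, or by a careful local $W^{2,n}$ approximation of $\Gamma$ near contact points preserving concavity and the touching property. This is precisely the technical content of \cite[Proposition 3.3]{CCKS96}, and it is essentially the only place where genuine work beyond the classical strong-solution proof is required.
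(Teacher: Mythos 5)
The paper does not give its own proof of this theorem; it is stated as a preliminary and attributed to \cite[Theorem 3.2]{CC95} and \cite[Proposition 3.3]{CCKS96}. Your sketch is a correct and complete outline of exactly the argument contained in those references: reduction to $\sup_{\p\Omega} u = 0$, upper concave envelope of $u^+$ extended by zero to a containing ball, the gradient-image covering $\nabla\Gamma(\{\Gamma=w\}) \supseteq B_{M/(2d)}$, the area formula, the pointwise bound $|\det D^2\Gamma|\leq (n\lambda)^{-n}|f|^n$ on the contact set via matrix AM--GM, and the sup-convolution (or $W^{2,n}$-approximation) step from \cite{CCKS96} needed to justify plugging the merely concave $\Gamma$ into the $L^n$-viscosity inequality. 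So the proposal is correct and coincides with the (cited) proof; there is nothing in the paper itself to compare it against beyond the references you correctly identify as carrying the technical weight.
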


In the case of divergence form equations, the global boundedness of weak solutions is known in more generality. For our purposes, it is sufficient to consider the case $p = n$.

\begin{thm}[\protect{\cite[Theorem 8.16]{GT98}}]\label{thm:ABP_divergence}
Assume that $\Omega \subset \R^n$ is a bounded domain. Let $\L$ be a divergence form operator as in (\ref{eq:divergence_operator}) and let $u \in \mathcal{C}(\overline{\Omega})$ be a weak solution of $\mathcal{L}u \geq f$, with $f \in L^p(\Omega)$, $p > n/2$. Assume that $u$ is bounded on $\partial\Omega$.

Then,
$$\sup\limits_\Omega u \leq \sup\limits_{\p\Omega} u + C\|f\|_{L^p(\Omega)}$$
with $C$ only depending on the dimension, $|\Omega|$, $p$, $\lambda$ and $\Lambda$.
\end{thm}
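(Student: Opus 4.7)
The plan is to combine a De Giorgi-Moser iteration with a single energy estimate. The supersolution statement follows from the subsolution one via $u \mapsto -u$, so I will work with $\L u \geq f$. After subtracting $\sup_{\p\Omega} u$, I may assume $u \leq 0$ on $\p\Omega$, which places $u_+$ in $H^1_0(\Omega)$.

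\emph{Moser iteration.} Setting $w := u_+ + k$ with $k := \|f\|_{L^p(\Omega)}$ (so that the inhomogeneity is absorbed into $w$ rather than yielding a stray additive constant), I would use $\varphi = w^{2\beta+1} - k^{2\beta+1} \in H^1_0(\Omega)$ as a test function in the weak subsolution inequality. Uniform ellipticity $A \geq \lambda I$ on the left and Hölder's inequality with $k \leq w$ on the right produce a Caccioppoli-type bound
\begin{equation*}
\|\nabla (w^{\beta+1})\|_{L^2(\Omega)}^2 \leq C(\beta+1) \|w\|_{L^{(2\beta+1)p'}(\Omega)}^{2(\beta+1)}, \quad p' = \tfrac{p}{p-1}.
\end{equation*}
The Sobolev embedding $H^1_0 \hookrightarrow L^{2^*}$ with $2^* = 2n/(n-2)$ turns this into a reverse Hölder inequality $\|w\|_{L^{\chi q}(\Omega)} \leq [C(q+1)]^{1/q}\|w\|_{L^q(\Omega)}$ with growth factor $\chi = 2^*/(2p') = n(p-1)/((n-2)p)$, which is strictly larger than $1$ precisely because $p > n/2$. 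Iterating on the geometric sequence $q_j = \chi^j q_0$ and summing the convergent telescoping series gives $\|w\|_{L^\infty(\Omega)} \leq C\|w\|_{L^{q_0}(\Omega)}$ for some finite $q_0 = q_0(n,p)$.

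\emph{Closing the loop.} To control $\|u_+\|_{L^{q_0}}$ in terms of $\|f\|_{L^p}$, I test the equation with $u_+$ itself to get $\lambda\|\nabla u_+\|_{L^2}^2 \leq \|f\|_{L^p}\|u_+\|_{L^{p'}}$; since $p > n/2$ forces $p' < 2^*$, Sobolev-Poincaré gives $\|u_+\|_{L^{p'}} \leq C\|\nabla u_+\|_{L^2}$ and hence $\|u_+\|_{L^{p'}} \leq C(|\Omega|,n,\lambda)\|f\|_{L^p}$. Hölder on the bounded domain interpolates this to the $L^{q_0}$ norm, and substituting into the Moser output and translating $u_+$ back to $u$ yields the desired estimate.

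The main technical obstacle is the careful choice of $k$ and the bookkeeping of exponents in the iteration: the choice $k = \|f\|_{L^p}$ is exactly what pairs the inhomogeneity with the iterated quantity $w$, and the hypothesis $p > n/2$ is precisely what separates $\chi$ from $1$ so that the iterated exponents diverge while the product of multiplicative constants stays finite. The borderline case $p = n/2$ would require an exponential Orlicz-Moser iteration or fail outright, consistent with the strict inequality in the statement.
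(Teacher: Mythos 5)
The paper does not prove this statement; it cites it directly as Theorem 8.16 of Gilbarg--Trudinger, whose proof is precisely a Moser-type iteration with power test functions applied to $\bar u = u^+ + k$, $k \sim \|f\|_{L^p}$, combined with Sobolev embedding. Your proposal reproduces that argument faithfully: the absorption of the inhomogeneity via $w = u_+ + k$, the reverse H\"older inequality with ratio $\chi = \frac{n(p-1)}{(n-2)p} > 1 \iff p > n/2$, the iteration, and the closing energy estimate are all as in the textbook. The only omissions are standard bookkeeping: the test functions $w^{2\beta+1}-k^{2\beta+1}$ should be truncated to ensure membership in $H^1_0$, the absorption of the factor $\|f\|_{L^p}=k$ into $\|w\|_{L^q}$ costs a harmless $|\Omega|^{-1/q}$, and for $n=2$ one replaces $2^*$ by an arbitrary large Sobolev exponent --- none of which affects the conclusion or the role of $p>n/2$.
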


We will need the two estimates that are classically combined to obtain the Krylov-Safonov Harnack inequality. The first one is the following weak Harnack inequality, valid for $L^n$-viscosity solutions of non-divergence form equations. We refer to \cite[Theorem 2]{Tru80} and \cite[Theorem 4.5]{KS09}.

\begin{thm}[Weak Harnack inequality]
Let $\L$ be a non-divergence form operator as in (\ref{eq:non-divergence_operator}). Let $u$ satisfy $\mathcal{L}u \leq 0$ in $\Omega$ in the $L^n$-viscosity sense and let $B_R(y) \subset \Omega$. Then, for all $\sigma < 1$,
$$\|u\|_{L^p(B_{\sigma R})} \leq C\inf\limits_{B_{\sigma R}}u,$$
where $p$ and $C$ are positive and depend only on the dimension, $\sigma$ and $\Lambda/\lambda$.
\end{thm}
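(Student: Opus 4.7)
The plan is to follow the Krylov--Safonov strategy, reducing the $L^p$ bound to a polynomial decay estimate on the distribution function of $u$, which itself follows from a single-scale ABP-based estimate that is iterated via a cube-stacking argument.

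By scaling and translation I normalize $R=1$, $y=0$, and I may assume $u \geq 0$ (otherwise the right-hand side is useless). It suffices to prove the estimate with $B_{\sigma R}$ replaced by $B_{1/2}$ on the left and $B_{3/4}$ on the right; the form stated follows by a standard chaining covering. The key single-scale estimate I need is the following \emph{$\varepsilon$-lemma}: there exist constants $M > 1$ and $\mu \in (0,1)$ depending only on $n$ and $\Lambda/\lambda$ such that
\[
\inf_{B_{1/2}} u \leq 1 \implies |\{u \leq M\} \cap B_{1/2}| \geq \mu |B_{1/2}|.
\]
To prove it I introduce a radial barrier $\phi(x) = c_1(|x|^{-\alpha} - c_2)_+$, choosing $\alpha$ large enough so that $\mathcal{M}^+(D^2\phi) \leq 0$ in $B_1 \setminus \overline{B_{1/4}}$ and $\phi \geq 0$ with $\phi \geq c$ on $\overline{B_{1/2}}$ and $\phi \leq 0$ outside $B_1$. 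Applying the ABP estimate (Theorem~\ref{thm:ABP}) to $u-\phi$ on $B_1$, which is an $L^n$-viscosity supersolution of $\mathcal{L}(u-\phi) \leq 0$ there (modulo a bounded contribution supported on $B_{1/4}$), and using the hypothesis $\inf_{B_{1/2}} u \leq 1$, yields a pointwise lower bound for $u$ on a set of positive measure in $B_{1/2}$, hence the stated measure estimate.

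The next step is the classical Calder\'on--Zygmund/Krylov--Safonov iteration: starting from $\inf_{B_{3/4}} u \leq 1$ and writing $A_k := \{u > M^k\} \cap B_{1/2}$, I apply the $\varepsilon$-lemma at all dyadic subcubes of $B_{1/2}$ on which $u$ drops below $M^{k-1}$ at some interior point, and a Vitali-type covering lemma then gives
\[
|A_k| \leq (1-\mu)^k |B_{1/2}|, \qquad k \in \mathbb{N},
\]
which is equivalent to the weak-type estimate
\[
|\{u > t\} \cap B_{1/2}| \leq C\, t^{-p_0}, \qquad p_0 := \frac{\log(1-\mu)}{\log(1/M)} > 0.
\]
Integrating the distribution function via the layer-cake formula
\[
\int_{B_{1/2}} u^p \, dx = p \int_0^\infty t^{p-1} |\{u > t\} \cap B_{1/2}| \, dt
\]
for any $p \in (0, p_0)$ gives a finite bound depending only on $n$, $\Lambda/\lambda$ and $p$. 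Undoing the normalization turns this into $\|u\|_{L^p(B_{1/2})} \leq C \inf_{B_{3/4}} u$, and a chaining/covering argument upgrades the two balls into $B_{\sigma R}$ on both sides with $C$ deteriorating as $\sigma \to 1$.

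The main obstacle I expect is the legality of applying the ABP estimate and the comparison with the barrier $\phi$ in the $L^n$-viscosity framework rather than the strong one: one must check that $u-\phi$ is genuinely an $L^n$-viscosity supersolution and that the boundary condition on $\partial B_1$ is attained in the correct sense, which requires the compatibility result for linear operators with bounded measurable coefficients from \cite{CCKS96}. Once this is in place, all the remaining pieces (barrier construction, ABP, Vitali covering, layer-cake integration) are classical and quantitative in exactly the parameters $n$, $\Lambda/\lambda$, and $\sigma$ allowed by the statement.
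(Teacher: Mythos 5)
The paper does not prove this theorem; it is a recalled classical result cited from \cite{Tru80} and \cite{KS09}. Your reconstruction follows the standard Krylov--Safonov/Caffarelli--Cabr\'e route (barrier, ABP at a single scale, Calder\'on--Zygmund iteration, layer-cake), which is indeed the right strategy. However, there is a concrete error in the barrier step that, as written, makes the $\varepsilon$-lemma fail.

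You claim that for $\phi(x) = c_1\bigl(|x|^{-\alpha}-c_2\bigr)_+$ one can take $\alpha$ large so that $\mathcal{M}^+(D^2\phi)\le 0$ away from the origin. This is false: writing $r=|x|$, the eigenvalues of $D^2(r^{-\alpha})$ are $-\alpha r^{-\alpha-2}$ with multiplicity $n-1$ and $\alpha(\alpha+1)r^{-\alpha-2}$ with multiplicity $1$, so
$$\mathcal{M}^+(D^2\phi)=c_1\alpha r^{-\alpha-2}\bigl[\Lambda(\alpha+1)-\lambda(n-1)\bigr],$$
which is \emph{strictly positive} precisely when $\alpha$ is large. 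The inequality you want is the opposite Pucci operator going the opposite way:
$$\mathcal{M}^-(D^2\phi)=c_1\alpha r^{-\alpha-2}\bigl[\lambda(\alpha+1)-\Lambda(n-1)\bigr]\ge 0 \quad\text{for } \alpha\ge \tfrac{\Lambda}{\lambda}(n-1)-1.$$
That is what you need: $\mathcal{M}^-(D^2\phi)\ge 0$ makes $\phi$ a subsolution of every admissible $\mathcal L$, so that $\mathcal L(u-\phi)=\mathcal L u-\mathcal L\phi\le -\mathcal{M}^-(D^2\phi)\le 0$ and $u-\phi$ is a supersolution. With $\mathcal{M}^+(D^2\phi)\le 0$, $\phi$ would be a supersolution, and $u-\phi$ would be a difference of two supersolutions, which gives nothing.

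Two further points worth tightening. First, your $\phi$ blows up as $|x|\to 0$, so $u-\phi\to-\infty$ at the origin and you cannot apply ABP on all of $B_1$; the standard fix (as in Caffarelli--Cabr\'e, Lemma~4.1) is to truncate and smooth $\phi$ near $0$ so it is bounded, $\mathcal{C}^{1,1}$, and satisfies $\mathcal{M}^-(D^2\phi)\ge -C\xi$ with $\xi$ a smooth bump supported near the origin --- that bump is the ``bounded contribution'' you mention, but for the raw $\phi$ it is not bounded, and the kink of $(\cdot)_+$ also takes $\phi$ out of $W^{2,n}_{\mathrm{loc}}$, which matters in the $L^n$-viscosity framework. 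Second, the ABP estimate as stated in Theorem~\ref{thm:ABP} yields an $L^\infty$ bound, not a measure estimate; to extract $|\{u\le M\}\cap B_{1/2}|\ge\mu$ you need the refined ABP where the $L^n$ norm of the right-hand side is taken over the lower contact set, which is proved in \cite{CC95,CCKS96} but not restated in the paper. With the Pucci sign corrected and the barrier smoothed, the rest of your outline (the Calder\'on--Zygmund iteration giving $|A_k|\le (1-\mu)^k$, the layer-cake integration for $p<p_0$, and the final covering/Harnack chain to reach general $\sigma<1$) is the classical argument and is sound.
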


Now, combining this theorem with the ABP estimate, applied to the function $1 - u$, we obtain the following result. This is also valid for divergence form equations, and sometimes known as De Giorgi oscillation lemma in that setting. The case with $f = 0$ is found in \cite[Theorem 11.2]{CS05}, and we can extend it easily to the general case using Theorem \ref{thm:ABP_divergence}.

\begin{cor}\label{cor:measure_Harnack_rhs}
Let $\L$ be as in (\ref{eq:non-divergence_operator}) or (\ref{eq:divergence_operator}). Let $r \in (0,1]$, $u \leq 1$, $\mathcal{L}u \geq f$ in $B_r$, in the $L^n$-viscosity or the weak sense, with $f \in L^n(B_r)$. Assume $|\{u \leq 0\}| \geq \eta|B_r| > 0$, and that $\|f\|_{L^n(B_r)} \leq \delta(\eta)$. Then,
$$\sup\limits_{B_{r/2}} u \leq 1 - \gamma(\eta),$$
where $\delta(\eta) > 0$ and $\gamma(\eta) \in (0,1)$ depend only on the dimension, $\lambda, \Lambda$ and $\eta$.
\end{cor}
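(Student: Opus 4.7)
The plan is precisely the one sketched just above the statement: apply the weak Harnack inequality (in its De Giorgi oscillation form) to $1-u$ and absorb the right-hand side with an ABP-type corrector. A preliminary rescaling $\tilde u(x)=u(rx)$, $\tilde f(x)=r^{2}f(rx)$ gives $\|\tilde f\|_{L^{n}(B_{1})}=r\|f\|_{L^{n}(B_{r})}\leq\delta(\eta)$, and since the measure condition is scale invariant, I reduce at once to $r=1$.

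Next I introduce a corrector to absorb $f$. Let $h$ be the $L^{n}$-viscosity (respectively weak) solution of $\mathcal{L}h=f$ in $B_{1}$ with $h=0$ on $\partial B_{1}$. Applying the ABP estimate (Theorem \ref{thm:ABP} in the non-divergence case, Theorem \ref{thm:ABP_divergence} with exponent $n$ in the divergence case) to $\pm h$ yields
\[
\|h\|_{L^{\infty}(B_{1})}\leq C\|f\|_{L^{n}(B_{1})}\leq C\delta(\eta)=:M.
\]
Setting $w:=1-u+h+M$, we have $w\geq 0$ in $B_{1}$ (because $u\leq 1$ and $h+M\geq 0$) and, by linearity of $\mathcal{L}$,
\[
\mathcal{L}w=-\mathcal{L}u+\mathcal{L}h\leq -f+f=0,
\]
so $w$ is a non-negative supersolution of the \emph{homogeneous} equation. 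Moreover, on $\{u\leq 0\}\cap B_{1}$ (a set of measure at least $\eta|B_{1}|$) one has $w\geq 1+(h+M)\geq 1$.

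The homogeneous case of the Corollary is now the classical $L^{\varepsilon}$ / De Giorgi oscillation estimate (\cite[Theorem~11.2]{CS05} in the non-divergence case; the corresponding De Giorgi lemma for divergence operators), which converts the measure bound $|\{w\geq 1\}\cap B_{1}|\geq\eta|B_{1}|$ into a pointwise lower bound
\[
\inf_{B_{1/2}}w\geq c(\eta)>0.
\]
Unwinding the definition of $w$ yields $\sup_{B_{1/2}}u\leq 1+2M-c(\eta)$, and choosing $\delta(\eta)>0$ small enough that $2C\delta(\eta)\leq c(\eta)/2$ gives $\sup_{B_{1/2}}u\leq 1-c(\eta)/2=:1-\gamma(\eta)$, as required.

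The essentially only subtle point is the measure-to-infimum step, which is not literally the same-ball weak Harnack quoted just above: the measure information is inside all of $B_{1}$, while the conclusion is a pointwise bound on the strictly smaller $B_{1/2}$. This is the standard strengthening of the weak Harnack coming from the Krylov-Safonov covering argument (respectively De Giorgi's iteration), and is the form in which the estimate is customarily applied in free boundary regularity; every other step is a direct application of ABP and linearity.
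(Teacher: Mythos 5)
Your plan is the one the paper gestures at, and the computations (the $r$-rescaling giving $\|\tilde f\|_{L^n(B_1)} = r\|f\|_{L^n(B_r)}$, the ABP bound on the corrector, the lower bound $w \geq 1$ on the set $\{u \leq 0\}$, and the final choice of $\delta(\eta)$) are all correct. There is, however, one step that you treat as if it were automatic and that is not: in the $L^n$-viscosity setting you cannot, in general, add two viscosity solutions. You define $h$ to be the $L^n$-viscosity solution of $\mathcal{L}h = f$ with $h = 0$ on $\partial B_1$ and then conclude $\mathcal{L}(1-u+h+M) \leq 0$ "by linearity of $\mathcal{L}$." For $L^n$-viscosity solutions this requires one of the summands to belong to $W^{2,n}_{\mathrm{loc}}$ (see \cite{CCKS96}); since $\mathcal{L}$ only has bounded measurable coefficients and $f$ is only in $L^n$, the corrector $h$ is not a priori $W^{2,n}_{\mathrm{loc}}$, so this addition is not licensed by the definition. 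The gap can be closed in at least two ways: either replace the corrector by one that solves a Pucci-type equation $\mathcal{M}^-(D^2h) = |f|$ regularized enough to be strong (then one only needs the elementary comparison $\mathcal{L}h \geq \mathcal{M}^-(D^2h)$), or, more simply, skip the corrector entirely and apply directly the version of the weak Harnack/De~Giorgi oscillation estimate that already carries a right-hand side, as in the references the paper cites (\cite{Tru80}, \cite{KS09}): $1-u$ is a nonnegative supersolution with $\mathcal{L}(1-u) \leq -f$ and the measure bound $|\{1-u \geq 1\} \cap B_1| \geq \eta|B_1|$, and the inhomogeneous weak Harnack gives $\inf_{B_{1/2}}(1-u) \geq c(\eta) - C\|f\|_{L^n(B_1)}$, which is your conclusion after choosing $\delta(\eta)$. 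In the divergence-form case your corrector argument is fine as stated, since weak solutions add. Everything else in your proposal, including the observation that the measure-to-infimum step is a different ball than in the stated weak Harnack, is correctly identified and handled.
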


The second estimate is the upper bound in Harnack inequality, also valid for \mbox{$L^n$-viscosity} solutions of non-divergence form equations \cite[Theorem 1]{Tru80}, \cite{KS12} and weak solutions of divergence form equations \cite{DG57, LZ17}. In the divergence form case, we can add the right hand side using Theorem \ref{thm:ABP_divergence}.

\begin{thm}[$L^\infty$ bound for subsolutions]\label{thm:mig_Harnack_sup_norma}
Let $p > 0$ and let $\L$ be as in (\ref{eq:non-divergence_operator}) or (\ref{eq:divergence_operator}). Let $\mathcal{L}u \geq f$ in $B_1$, in the $L^n$-viscosity or the weak sense. Then,
$$\sup\limits_{B_{1/2}} u \leq C_p(\|u\|_{L^p(B_1)} + \|f\|_{L^n(B_1)}),$$
where $C_p > 0$ depends only on the dimension, $p, \lambda$ and $\Lambda$.
\end{thm}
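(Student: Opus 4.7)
The plan is to first obtain the estimate for some fixed exponent $p_0 > 0$, and then extend it to every $p > 0$ by a scaling, interpolation and iteration argument.

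For the fixed exponent $p_0 > 0$, the base bound
$$\sup\limits_{B_{1/2}} u \leq C\bigl(\|u_+\|_{L^{p_0}(B_1)} + \|f\|_{L^n(B_1)}\bigr)$$
is a classical local maximum principle. In the non-divergence setting (\ref{eq:non-divergence_operator}), it is the Krylov--Safonov upper bound from \cite{Tru80, KS12}, obtained by combining the weak Harnack inequality with the ABP estimate of Theorem \ref{thm:ABP} to absorb the contribution of $f$. In the divergence setting (\ref{eq:divergence_operator}), it follows from the De Giorgi--Moser iteration, the term with $f$ being controlled via Theorem \ref{thm:ABP_divergence} (for instance by subtracting a solution $w$ of $\mathcal{L}w = f$ with zero boundary data). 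For $p \geq p_0$, the statement is then immediate by Hölder's inequality.

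To reach $p \in (0, p_0)$, I would set $\Phi(r) := \sup\limits_{B_r} u_+$ and rescale the base estimate to an arbitrary ball $B_\rho(x_0) \subset B_1$, obtaining
$$\sup\limits_{B_{\rho/2}(x_0)} u \leq C\bigl(\rho^{-n/p_0}\|u_+\|_{L^{p_0}(B_\rho(x_0))} + \rho\|f\|_{L^n(B_\rho(x_0))}\bigr).$$
Covering $B_r$ by such balls with $\rho = R - r$, for $1/2 \leq r < R \leq 1$, one gets
$$\Phi(r) \leq C(R-r)^{-n/p_0}\|u_+\|_{L^{p_0}(B_R)} + C\|f\|_{L^n(B_1)}.$$
Combining the interpolation $\|u_+\|_{L^{p_0}(B_R)} \leq \Phi(R)^{1-p/p_0}\|u_+\|_{L^p(B_1)}^{p/p_0}$ with Young's inequality, applied with the conjugate exponents $p_0/(p_0 - p)$ and $p_0/p$, produces
$$\Phi(r) \leq \tfrac{1}{2}\Phi(R) + C(R-r)^{-n/p}\|u_+\|_{L^p(B_1)} + C\|f\|_{L^n(B_1)}$$
for all $1/2 \leq r < R \leq 1$. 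A standard iteration lemma then absorbs the term $\tfrac{1}{2}\Phi(R)$ and gives $\Phi(1/2) \leq C_p\bigl(\|u_+\|_{L^p(B_1)} + \|f\|_{L^n(B_1)}\bigr)$, as desired.

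The main technical point is the interpolation step: the scaling of the base inequality and the powers of $(R-r)$ must be tracked carefully so that Young's inequality produces precisely the exponent $-n/p$ and so that the iteration closes. Once this machinery is in place, the divergence and non-divergence cases are handled uniformly, since the base $L^\infty$--$L^{p_0}$ bound with right hand side is available in both frameworks.
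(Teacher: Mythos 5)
The paper does not prove Theorem \ref{thm:mig_Harnack_sup_norma}: it is stated as a preliminary and attributed to the literature (Trudinger \cite{Tru80} and Koike--\'Swi\k{e}ch \cite{KS12} in the non-divergence case, De Giorgi \cite{DG57} and Li--Zhang \cite{LZ17} in the divergence case), with only a one-line remark that the right hand side is incorporated via the ABP estimate. Your write-up is a correct reconstruction of exactly this classical argument: establish the base $L^\infty$--$L^{p_0}$ bound (Krylov--Safonov / De Giorgi--Moser plus ABP to absorb $f$), then lower the exponent by the standard rescaling--interpolation--Young--iteration scheme, and your bookkeeping of the scaling exponents $(R-r)^{-n/p_0}$, $(R-r)^{-n/p}$ is consistent. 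One small technical point worth flagging: the iteration lemma requires $\Phi(r)=\sup_{B_r}u_+$ to be finite on the whole interval, and $\Phi(1)$ need not be finite; this is fixed by running the iteration on $[1/2,\,3/4]$ (or any $[1/2,\rho_1]$ with $\rho_1<1$), which is enough since the final bound is in terms of $\|u_+\|_{L^p(B_1)}$ and $\|f\|_{L^n(B_1)}$ anyway. With that adjustment the argument is complete and does not conflict with anything in the paper.
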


\section{Proof of Theorem \ref{thm:BH_rhs}}\label{sect:main}
\subsection{Nondegeneracy}
To study solutions of $\mathcal{L}u = f$ in a Lipschitz domain it is useful to know their behaviour in a cone. In this first part of the proof we show that, much like solutions of elliptic equations with zero Dirichlet boundary conditions separate linearly from the boundary of the domain in domains with the interior ball condition (Hopf lemma), the solutions of elliptic equations with zero Dirichlet boundary conditions separate as a power of the distance at corners, and the exponent approaches $1$ as the corners become wider.

\begin{lem}\label{lem:BH_subsolution}
Let $\mathcal{L}$ be in non-divergence form as in (\ref{eq:non-divergence_operator}). Let $\beta > 1$. There exist sufficiently small $c(\beta) > 0, \eta > 0$, only depending on the dimension, $\beta$, $\lambda$ and $\Lambda$, such that the following holds. 

Let $u$ be any solution of
\begin{equation*}
\left\{
\begin{array}{rcll}
\mathcal{L}u & \leq & c(\beta) & \text{in } C_\eta\\
u & \geq & 1 & \text{on } \{x_n = 1\} \cap \overline{C_\eta}\\
u & \geq & 0 & \text{in } \partial C_\eta,
\end{array}
\right.
\end{equation*}
where $C_\eta$ is the cone defined as
$$C_\eta := \left\{x \in \R^n : \eta|x'| < x_n < 1\right\}.$$
Then,
$$u(te_n) \geq t^\beta, \quad \forall t \in (0,1).$$
\end{lem}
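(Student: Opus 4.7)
The plan is to find an explicit barrier $\phi \in C(\overline{C_\eta})$ which is a subsolution of $\mathcal{L}\phi = c(\beta)$ in $C_\eta$ (in the $L^n$-viscosity sense) and which satisfies $\phi \leq 0$ on the lateral face $\{x_n = \eta|x'|\}$, $\phi \leq 1$ on the top $\{x_n = 1\} \cap \overline{C_\eta}$, and $\phi(te_n) \geq t^\beta$ for $t \in (0,1)$. Once such a $\phi$ is in hand, the function $w := u - \phi$ satisfies $\mathcal{L} w \leq c(\beta) - c(\beta) = 0$ in $C_\eta$ together with $w \geq 0$ on $\partial C_\eta$ by the boundary hypotheses on $u$; the ABP estimate of Theorem~\ref{thm:ABP} then forces $w \geq 0$ in $C_\eta$, and evaluating at $te_n$ gives $u(te_n) \geq \phi(te_n) \geq t^\beta$.

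For $\beta \in (1,2]$ I would take the smooth $\beta$-homogeneous ansatz
\[
\phi(x) := g(x)^{\beta/2}, \qquad g(x) := x_n^2 - \eta^2|x'|^2,
\]
which is smooth inside $C_\eta$, vanishes on the lateral face, is bounded by $1$ on the top, and equals $t^\beta$ along the axis. Writing
\[
D^2\phi = \tfrac{\beta}{2}\,g^{\beta/2-1}D^2g + \tfrac{\beta(\beta-2)}{4}\,g^{\beta/2-2}\nabla g \otimes \nabla g,
\]
with $D^2 g = \operatorname{diag}(-2\eta^2,\dots,-2\eta^2, 2)$ and $\nabla g \otimes \nabla g$ a rank-one tensor supported in the $(e_n, \hat x')$-plane, a trace/determinant computation in that $2$-plane (whose determinant works out to $-\eta^2\beta^2(\beta-1) g^{\beta-2}$) shows that $D^2\phi$ has one positive eigenvalue close to $\beta(\beta-1)x_n^{\beta-2}$ and $n-1$ small negative eigenvalues of order $-\beta \eta^2 x_n^{\beta-2}$. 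Hence
\[
\mathcal{M^-}(D^2\phi) \geq \beta x_n^{\beta-2}\bigl[\lambda(\beta-1) - (n-1)\Lambda \eta^2 + O(\eta^4)\bigr] \geq \tfrac{1}{2}\lambda\beta(\beta-1)\, x_n^{\beta-2}
\]
for $\eta = \eta(\beta,\lambda,\Lambda,n)$ small enough. Because $\beta-2\leq 0$ and $x_n \leq 1$, we have $x_n^{\beta-2}\geq 1$, and we conclude $\mathcal{L}\phi \geq c(\beta) := \tfrac{1}{2}\lambda\beta(\beta-1) > 0$ uniformly in $C_\eta$.

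For $\beta > 2$ the previous lower bound degenerates as $x_n \to 0$, so I would take the augmented barrier
\[
\phi(x) := (1-\varepsilon)\, g(x)^{\beta/2} + \varepsilon\, g(x),
\]
for a small $\varepsilon > 0$. Superadditivity of $\mathcal{M^-}$ together with the bound $\mathcal{L} g = 2A_{nn} - 2\eta^2 \operatorname{Tr}(A') \geq 2\lambda - 2(n-1)\Lambda\eta^2 \geq \lambda$ (for $\eta$ small) yields $\mathcal{L}\phi \geq (1-\varepsilon)\tfrac{1}{2}\lambda\beta(\beta-1)\, x_n^{\beta-2} + \varepsilon \lambda \geq \varepsilon\lambda =: c(\beta)$ uniformly. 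The boundary conditions are preserved: both $g$ and $g^{\beta/2}$ vanish on the lateral face, $\phi \leq (1-\varepsilon)+\varepsilon = 1$ on the top, and along the axis $\phi(te_n) = (1-\varepsilon)t^\beta + \varepsilon\, t^2 \geq t^\beta$ since $t^2 \geq t^\beta$ for $t \in (0,1]$ and $\beta > 2$.

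The main obstacle is verifying the Pucci inequality $\mathcal{M^-}(D^2\phi) \geq c(\beta)$ uniformly in $C_\eta$: the natural barrier automatically has $n-1$ negative eigenvalues competing with a single positive one, and only the narrowness of the cone (small $\eta$) lets the positive eigenvalue dominate, which is exactly what forces the threshold $\eta_0(\beta)$ to depend on $\beta, \lambda, \Lambda$ and $n$. A secondary technicality is that the $(e_n, \hat x')$-plane is ill defined at $x'=0$, but at such points $\nabla g$ is already aligned with $e_n$ and the positive $\partial_n^2$ eigenvalue survives intact, so the same inequality follows by continuity (or, equivalently, by a smoothing argument and a standard passage to the limit in the viscosity sense).
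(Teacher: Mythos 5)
Your overall strategy (construct an explicit homogeneous subsolution of the Pucci inequality in the cone, then apply the comparison principle) matches the paper's, but the specific barrier you chose does not work for $\beta \in (1,2)$, which is exactly the range the paper reduces to. The issue is near the lateral boundary $\{g = 0\}$ of the cone, where $g^{\beta/2}$ is not $\mathcal{C}^{1,1}$ and the Pucci inequality fails badly.

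Concretely: you correctly computed that the determinant of the $2\times 2$ block of $D^2\phi$ in the $(\hat x', e_n)$-plane equals $-\eta^2\beta^2(\beta-1)\,g^{\beta-2}$. But for $\beta < 2$ this tends to $-\infty$ as $g\to 0^+$, i.e.\ as one approaches the lateral face $\partial_l C_\eta = \{x_n = \eta|x'|\}$. Since the determinant is negative, the block has eigenvalues of opposite signs; the positive one in fact tends to $0$ as $g\to 0$ (it is comparable to $|\det|/|\operatorname{tr}| \sim g^{\beta/2}$), so the negative eigenvalue tends to $-\infty$. Equivalently, writing $s = \eta|x'|/x_n \in [0,1)$, the $\partial_n^2$ entry is $\beta x_n^{\beta-2}(1-s^2)^{\beta/2-2}(\beta-1-s^2)$, which $\to -\infty$ as $s\to 1$ for $\beta<2$. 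Consequently $\mathcal{M}^-(D^2\phi) \to -\infty$ near $\partial_l C_\eta$, and $\phi$ is \emph{not} a subsolution of $\mathcal{M}^-(D^2\phi)\geq c(\beta)$ in $C_\eta$ no matter how small $\eta$ is taken. Your expansion with the ``$O(\eta^4)$'' error implicitly assumed $s$ small and missed this blow-up; and one cannot absorb it by adding $\varepsilon g$ (as you do for $\beta>2$), nor by invoking ABP with an error term, since $\|\mathcal{M}^-(D^2\phi)^-\|_{L^n(C_\eta)} = \infty$ for $1<\beta<2$ as well.

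This is precisely the obstruction the paper's choice of barrier is designed to avoid: the paper uses $\varphi(x) = x_n^\beta f_\varepsilon(\eta|x'|/x_n)$ with a profile $f_\varepsilon(t) = \frac{\sqrt{1+\varepsilon}-\sqrt{t^2+\varepsilon}}{\sqrt{1+\varepsilon}-\sqrt{\varepsilon}}$ that is $\mathcal{C}^2$ on the closed interval $[0,1]$ and vanishes \emph{linearly} (not like $(1-t)^{\beta/2}$) at $t=1$. This keeps all entries of $D^2\varphi$ bounded by $C(\varepsilon)\,x_n^{\beta-2}$ uniformly in $t\in[0,1)$, and then the single positive $\partial_n^2$ eigenvalue dominates the $O(\eta^2\varepsilon^{-1/2})$ remainders once $\eta$ is chosen small in terms of $\varepsilon$ and $\beta$. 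Your profile $f(t) = (1-t^2)^{\beta/2}$ has $f'' \to -\infty$ at $t = 1$ for $\beta<2$, which is why the estimate cannot close. Your $\beta\geq 2$ analysis is fine (there the rank-one tensor is PSD, so $\mathcal{M}^-(D^2 g^{\beta/2}) \geq \frac{\beta}{2}g^{\beta/2-1}\mathcal{M}^-(D^2g) \geq 0$ and the $\varepsilon g$ correction does the job), but as the paper notes one may reduce to $\beta\in(1,2)$, so the problematic range is the one that matters.
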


\begin{proof}
Assume without loss of generality that $\beta \in (1, 2)$, because if the inequality holds for $\beta > 1$, it holds also for all $\beta' > \beta$. We will use the comparison principle with a subsolution that has the desired behaviour. Let $\varepsilon \in (0,1/20)$ to be chosen later. Notice that $\sqrt{1+\varepsilon} - \sqrt{\varepsilon} > 4/5$. Define the subsolution $\varphi$ as:

$$\varphi(x) = x_n^\beta f_\varepsilon\left(\frac{\eta|x'|}{x_n}\right), \quad f_\varepsilon(t) = \frac{\sqrt{1+\varepsilon} - \sqrt{t^2 + \varepsilon}}{\sqrt{1+\varepsilon} - \sqrt{\varepsilon}}.$$

We can readily check that $\varphi(x) = 0$ for $x \in \partial C_\eta$. It is also clear that $\varphi(x) \leq 1$ in $\{x_n = 1\} \cap C_\eta$, and that $\varphi > 0$ in $C_\eta$. Now, we need some estimates on $f_\varepsilon$ and its derivatives. For $t \in [0, 1)$,

\begin{align*}
&f_\varepsilon(t) = \frac{\sqrt{1+\varepsilon} - \sqrt{t^2 + \varepsilon}}{\sqrt{1+\varepsilon} - \sqrt{\varepsilon}} \geq \frac{\sqrt{1+\varepsilon} - t - \sqrt{\varepsilon}}{\sqrt{1+\varepsilon} - \sqrt{\varepsilon}} = 1 - \frac{t}{\sqrt{1+\varepsilon} - \sqrt{\varepsilon}} > 1 - \frac{5}{4}t\\
&f_\varepsilon'(t) = -\frac{t}{\sqrt{t^2+\varepsilon}(\sqrt{1+\varepsilon} - \sqrt{\varepsilon})} \geq \frac{-1}{\sqrt{1+\varepsilon} - \sqrt{\varepsilon}} > -\frac{5}{4}\\
&|t^{-1}f_\varepsilon'(t)| \leq \frac{1}{\sqrt{\varepsilon}(\sqrt{1+\varepsilon} - \sqrt{\varepsilon})} < \frac{5}{4}\varepsilon^{-1/2}\\
&f_\varepsilon'(t) \leq \frac{-t}{(t + \sqrt{\varepsilon})(\sqrt{1+\varepsilon} - \sqrt{\varepsilon})} \leq \frac{-t}{1 + \varepsilon} < -\frac{20}{21}t\\
&|f_\varepsilon''(t)| = \left|\frac{-\varepsilon}{(t^2 + \varepsilon)^{3/2}(\sqrt{1+\varepsilon} - \sqrt{\varepsilon})}\right| \leq \frac{1}{\sqrt{\varepsilon}(\sqrt{1+\varepsilon} - \sqrt{\varepsilon})} < \frac{5}{4}\varepsilon^{-1/2}
\end{align*}
\begin{equation*}
\begin{split}
|t^2f_\varepsilon''(t)| &= \left|\frac{-\varepsilon t^2}{(t^2 + \varepsilon)^{3/2}(\sqrt{1+\varepsilon} - \sqrt{\varepsilon})}\right| \leq \left(\frac{\varepsilon^{2/3}t^{4/3}}{t^2 + \varepsilon}\right)^{3/2}\frac{1}{\sqrt{1+\varepsilon} - \sqrt{\varepsilon}}\\
&< \left(\frac{2^{2/3}\varepsilon^{1/3}}{3}\right)^{3/2}\frac{5}{4} < \frac{1}{2}\varepsilon^{1/2}.
\end{split}
\end{equation*}

In the last inequality we used that
$$\varepsilon^{2/3}t^{4/3} = 2^{2/3}\varepsilon^{1/3}\sqrt[3]{\varepsilon(t^2/2)(t^2/2)} \leq 2^{2/3}\varepsilon^{1/3}\frac{\varepsilon + t^2/2 + t^2/2}{3} = \frac{2^{2/3}\varepsilon^{1/3}}{3}(t^2 + \varepsilon).$$

Then, we will make $\varepsilon$ small and then $\eta$ small in such a way that $\mathcal{L}\varphi \geq c(\beta)$. To make the computations easier, we will use the Pucci operator $\mathcal{M}^-$, and we will denote $t = \eta|x'|/x_n$. On the one hand, we can check that
\begin{equation*}
\begin{split}
\frac{\partial^2\varphi}{\partial x_n^2} &= x_n^{\beta - 2}((\beta^2-\beta)f_\varepsilon(t) + (2-2\beta)tf_\varepsilon'(t) + t^2f_\varepsilon''(t))\\
&> x_n^{\beta-2}\left((\beta^2-\beta)\left(1 - \frac{5}{4}t\right) + (\beta - 1)\frac{40}{21}t^2 - \frac{1}{2}\varepsilon^{1/2}\right)\\
&> x_n^{\beta-2}\left((\beta - 1)\left(\beta - \frac{5\beta}{4}t + \frac{40}{21}t^2\right) - \frac{1}{2}\varepsilon^{1/2}\right).
\end{split}
\end{equation*}

Now, we compute the discriminant of the second order polynomial that we found:
$$\operatorname{Discriminant}\left(\beta - \frac{5\beta}{4}t + \frac{40}{21}t^2\right) = \frac{25\beta^2}{16} - \frac{160\beta}{21} = \beta\left(\frac{25\beta}{16} - \frac{160}{21}\right) < 0.$$

Hence, the second order polyonmial is always positive and attains a minimum $m_\beta > 0$. Choose $\varepsilon$ such that $\varepsilon^{1/2} < (\beta-1)m_\beta$. Then,

$$\frac{\partial^2\varphi}{\partial x_n^2} > x_n^{\beta-2}\left((\beta-1)m_\beta - \frac{1}{2}\varepsilon^{1/2}\right) > x_n^{\beta-2}\frac{(\beta-1)m_\beta}{2} =: c_\beta x_n^{\beta-2}> 0$$

Consider now $i = 1, \ldots, n - 1$.
\begin{equation*}
\begin{split}
\left|\frac{\partial^2\varphi}{\partial x_i^2}\right| &= x_n^{\beta-2}\left|\eta^2 t^{-1}f_\varepsilon'(t)\frac{|x'|^2 - x_i^2}{|x'|^2} + \eta^2f_\varepsilon''(t)\frac{x_i^2}{|x'|^2}\right|\\
&\leq x_n^{\beta-2}(\eta^2|t^{-1}f_\varepsilon'(t)| + \eta^2|f_\varepsilon''(t)|)\\
&< x_n^{\beta-2}\eta^2\left(\frac{5}{4}\varepsilon^{-1/2} + \frac{5}{4}\varepsilon^{-1/2}\right) < x_n^{\beta-2}\eta^2\frac{5\varepsilon^{-1/2}}{2}.
\end{split}
\end{equation*}

Now we need to compute the crossed derivatives. We begin with
\begin{equation*}
\begin{split}
\left|\frac{\partial^2\varphi}{\partial x_i\partial x_n}\right| &= x_n^{\beta-2}\left|\eta(\beta-1)\frac{x_i}{|x'|}f_\varepsilon'(t) - \eta^2\frac{x_i}{|x'|}f_\varepsilon''(t)\right|\\
&\leq x_n^{\beta-2}(\eta(\beta-1)|f_\varepsilon'(t)| + \eta^2|f_\varepsilon''(t)|)\\
&< x_n^{\beta-2}\left(\eta\frac{5(\beta - 1)}{4} + \eta^2\frac{5}{4}\varepsilon^{-1/2}\right) < x_n^{\beta-2}(\eta+\eta^2)\frac{5\varepsilon^{-1/2}}{2}.
\end{split}
\end{equation*}

And finally, taking $i \neq j$ in $\{1, \ldots, n-1\}$,
\begin{equation*}
\begin{split}
\left|\frac{\partial^2\varphi}{\partial x_i\partial x_j}\right| &= x_n^{\beta-2}\left|-\eta^2 t^{-1}f_\varepsilon'(t)\frac{x_ix_j}{|x'|^2} + \eta^2f_\varepsilon''(t)\frac{x_ix_j}{|x'|^2}\right|\\
&\leq x_n^{\beta-2}(\eta^2|t^{-1}f_\varepsilon'(t)| + \eta^2|f_\varepsilon''(t)|)\\
&< x_n^{\beta-2}\eta^2\left(\frac{5}{4}\varepsilon^{-1/2} + \frac{5}{4}\varepsilon^{-1/2}\right) < x_n^{\beta-2}\eta^2\frac{5\varepsilon^{-1/2}}{2}.
\end{split}
\end{equation*}

Define $H(x) = D^2\varphi(x)$, and also $H_0(x)$ to be the matrix with $\partial^2\varphi/\partial x_n^2$ at the lower right corner and zeros in all other entries. On the one hand, by the definition of $\mathcal{M}^-$:
$$\mathcal{M}^-(H_0) \geq \lambda x_n^{\beta - 2}c_\beta.$$

Moreover, using that $\|H-H_0\|$ is bounded by the sum of the coefficients,
$$\mathcal{M}^-(H) \geq \mathcal{M}^-(H_0) - \Lambda\sum_{i,j=1}^n|(H-H_0)_{ij}| \geq x_n^{\beta - 2}F(\eta),$$
where
$$F(\eta) = \lambda c_\beta - 5\Lambda(n-1)(\eta+\eta^2)\varepsilon^{-1/2} - \frac{5\Lambda(n-1)^2}{2}\eta^2\varepsilon^{-1/2}.$$

Since $\varepsilon > 0$ is fixed, we choose $\eta$ small enough such that $F(\eta) \geq \lambda c_\beta/2$. To end the proof,
$$\mathcal{M}^-(D^2\varphi) = \mathcal{M}^-(H) \geq x_n^{\beta - 2}\frac{\lambda c_\beta}{2} \geq \frac{\lambda c_\beta}{2} =: c(\beta),$$
where we use that $x_n \leq 1$ and $\beta - 2 < 0$.

By the comparison principle, we conclude that $u(te_n) \geq \varphi(te_n) = t^\beta$.
\end{proof}

\begin{obs}
The constant $L_0$ in Theorem \ref{thm:BH_rhs} is limited, in fact, by the value of $\eta$ from this lemma, because the domain must contain wide enough cones, so the Lipschitz constant of the boundary must be small enough.
\end{obs}

To prove the nondegeneracy property for solutions of divergence form equations, we proceed by approximation. The continuity assumption on the coefficients in (\ref{eq:divergence_operator}) is necessary, see Proposition \ref{prop:counterexample_div_form}.

The following lemma is a natural approximation property of divergence form equations.

\begin{lem}\label{lem:approximation}
Let $\Omega$ be a bounded Lipschitz domain and $K \subset \Omega$ a compact subset. Let $\mathcal{L}_1, \mathcal{L}_2$ be divergence form operators, and let $u_1, u_2 \in H^1(\Omega)$ be the solutions of the following Dirichlet problems
\[
\left\{
\begin{array}{rcll}
\mathcal{L}_1u_1 & = & 0 & \text{in } \Omega\\
u_1 & = & g & \text{on } \p\Omega,\\
\end{array}
\right.
and\quad
\left\{
\begin{array}{rcll}
\mathcal{L}_2u_2 & = & 0 & \text{in } \Omega\\
u_2 & = & g & \text{on } \p\Omega,\\
\end{array}
\right.
\]
with $g \in H^1(\Omega)$ and
$$\mathcal{L}_1u_1 = \operatorname{Div}(A_1(x)\nabla u_1), \quad \mathcal{L}_2u_2 = \operatorname{Div}(A_2(x)\nabla u_2).$$

Then,
$$\|u_1 - u_2\|_{L^\infty(K)} \leq C\{\|A_1 - A_2\|_{L^\infty(\Omega)}, \|A_1 - A_2\|_{L^\infty(\Omega)}^\tau\},$$
where $C > 0$ and $\tau \in (0,1)$ depend only on $K$, $\Omega$, $g$ and the ellipticity constants.
\end{lem}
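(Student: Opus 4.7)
The plan is to estimate $w := u_1 - u_2$ in $L^\infty(K)$ by combining an $H^1$ smallness estimate (which is small when $A_1 \approx A_2$) with a Hölder regularity estimate (which is uniform in the coefficients), via a standard interpolation argument.

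First I would identify the equation for $w$. Since $\mathcal{L}_1 u_1 = 0 = \mathcal{L}_2 u_2$ and both $u_i$ share the same boundary data $g$, we have $w \in H^1_0(\Omega)$ and
\[
\operatorname{Div}(A_1 \nabla w) \;=\; \operatorname{Div}(A_1 \nabla u_1) - \operatorname{Div}(A_1 \nabla u_2) \;=\; \operatorname{Div}\bigl((A_2 - A_1)\nabla u_2\bigr)
\]
in the weak sense. Testing this identity with $w$ itself, using ellipticity on the left and Cauchy--Schwarz on the right, yields
\[
\lambda \|\nabla w\|_{L^2(\Omega)}^2 \;\leq\; \|A_1 - A_2\|_{L^\infty(\Omega)}\,\|\nabla u_2\|_{L^2(\Omega)}\,\|\nabla w\|_{L^2(\Omega)}.
\]
A standard energy estimate for $u_2$ gives $\|\nabla u_2\|_{L^2(\Omega)} \leq C(\lambda,\Lambda)\|g\|_{H^1(\Omega)}$, and combining with Poincaré (using $w \in H^1_0$) yields
\[
\|w\|_{L^2(\Omega)} \;\leq\; C_1 \, \|A_1 - A_2\|_{L^\infty(\Omega)},
\]
with $C_1$ depending on $\Omega$, $g$ and the ellipticity constants.

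Next I would upgrade this to an $L^\infty$ bound on $K$ via interior Hölder regularity. By the De Giorgi--Nash--Moser theorem applied to the homogeneous equations $\mathcal{L}_i u_i = 0$, each $u_i$ belongs to $\mathcal{C}^{0,\gamma}(K')$ for any compact $K \subset K' \Subset \Omega$, with
\[
\|u_i\|_{\mathcal{C}^{0,\gamma}(K')} \;\leq\; C_2\bigl(\|u_i\|_{L^2(\Omega)} + \|g\|_{H^{1/2}(\partial\Omega)}\bigr) \;\leq\; C_2(K', \lambda, \Lambda, g),
\]
and crucially both the exponent $\gamma > 0$ and the constant $C_2$ are independent of how regular the coefficients $A_i$ are, depending only on the ellipticity constants. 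Hence $\|w\|_{\mathcal{C}^{0,\gamma}(K')} \leq 2C_2 =: C_3$.

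Finally I would interpolate. Suppose that $|w(x_0)| = M$ for some $x_0 \in K$ and set $r = (M/(2C_3))^{1/\gamma}$, which we may take less than $\operatorname{dist}(K,\partial K')$ (otherwise $M$ is already controlled by $C_3$). Hölder continuity gives $|w| \geq M/2$ on $B_r(x_0) \subset K'$, so
\[
\|w\|_{L^2(\Omega)}^2 \;\geq\; (M/2)^2\,|B_r| \;\geq\; c\, M^{2 + n/\gamma}.
\]
Combined with the $L^2$ bound from the first step, this yields $M \leq C\,\|A_1 - A_2\|_{L^\infty(\Omega)}^{\tau}$ with $\tau = \tfrac{2\gamma}{n+2\gamma} \in (0,1)$. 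The $\max\{\cdot,\cdot\}$ in the statement simply absorbs the borderline regime where $\|A_1 - A_2\|_{L^\infty}$ is not small (in which case the linear estimate itself is vacuously true, since $|A_1-A_2| \leq 2\Lambda$).

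There is no serious obstacle here: the only point requiring care is that the Hölder constant $C_2$ is genuinely independent of the modulus of continuity of $A_i$ --- it depends only on $\lambda, \Lambda$ and the $H^1$-norm of the boundary data. This is exactly the content of De Giorgi--Nash--Moser, and it is what allows the interpolation to produce a rate depending only on ellipticity.
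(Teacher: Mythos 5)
Your proof is correct and follows essentially the same path as the paper's: test the equation for $w=u_1-u_2$ with $w$ itself to get the $H^1_0$ (hence $L^2$) smallness proportional to $\|A_1-A_2\|_{L^\infty}$, apply De Giorgi--Nash--Moser to obtain a coefficient-independent Hölder bound on a slightly larger compact set, and interpolate near a near-maximum point to pass from $L^2$ to $L^\infty(K)$. The only minor discrepancy is the value of $\tau$: your computation $\tau = \tfrac{2\gamma}{n+2\gamma}$ correctly tracks the dimension, whereas the paper writes $\tau = \tfrac{\alpha}{\alpha+1}$ (which would match your formula only for $n=2$); since the lemma only requires $\tau\in(0,1)$ and the precise exponent is never used, this has no bearing on what follows.
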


\begin{proof}
Since $u_1 = u_2$ on $\p\Omega$, we can use $v = u_1 - u_2$ as a test function in $H^1_0(\Omega)$, to obtain
$$\int_\Omega \nabla u_1^\top A_1 \nabla v = \int_\Omega \nabla u_2^\top A_2 \nabla v = 0,$$
so
$$0 = \int_\Omega (\nabla u_1^\top A_1 - \nabla u_2^\top A_2) \nabla v = \int_\Omega \nabla v^\top A_1\nabla v + \nabla u_2^\top(A_1 - A_2)\nabla v$$
and thus
\begin{align*}
\lambda\|\nabla v\|_{L^2(\Omega)}^2 &\leq \int_\Omega \nabla v^\top A_1\nabla v = -\int_\Omega\nabla u_2^\top(A_1-A_2)\nabla v\\
&\leq \|A_1 - A_2\|_{L^\infty(\Omega)}\|\nabla u_2\|_{L^2(\Omega)}\|\nabla v\|_{L^2(\Omega)}.
\end{align*}

Hence, using that the $H^1$ norm of $u_2$ can be bounded by a constant depending on the domain, the ellipticity constants and the boundary data,
$$\|\nabla v\|_{L^2(\Omega)} \leq C_1\|A_1 - A_2\|_{L^\infty(\Omega)}.$$
This, combined with the Poincaré inequality, yields $\|v\|_{L^2(\Omega)} \leq C_2\|A_1 - A_2\|_{L^\infty(\Omega)}$.

On the other hand, let $\delta = d(K,\p\Omega)$ and define the enlarged compact set\linebreak $K' = \{x \in \Omega : d(x,K) \leq \delta/2\}$. By the De Giorgi-Nash-Moser theorem, we have\linebreak $\|u_i\|_{\mathcal{C}^{0,\alpha}(K')} \leq C_3$, where $\alpha$ and $C_3$ depend only on the domain, the dimension and the ellipticity constants, thus $\|v\|_{\mathcal{C}^{0,\alpha}(K')} \leq 2C_3$.

Let $p \in K$ such that $|v|$ reaches its maximum, and assume without loss of generality that $v(p) > 0$. Then, for all $x \in B_{\delta/2}$, $v(p+x) \geq v(p) - 2C_3|x|^\alpha$, and we can estimate $\|v\|_{L^2(\Omega)}$. The first observation is that $v(p+x) \geq v(p)/2$ when $x$ is small enough, quantitatively,
$$v(p+x) \geq v(p)/2 \quad \Longleftrightarrow \quad 2C_3|x|^\alpha \leq v(p)/2 \quad \Longleftrightarrow |x| \leq C_4v(p)^{1/\alpha},$$
so now we can use $v(p+x) \geq v(p)\chi_E/2$, $E = B_{C_4v(p)^{1/\alpha}}$ to obtain
\begin{align*}
\|v\|_{L^2(\Omega)} &\geq \left(\int_{B_{\delta/2}(p)}v^2\right)^{1/2} = \left(\int_{B_{\delta/2}}v(p+x)^2\right)^{1/2}\\
&\geq \left(\int_{B_{\delta/2}}v(p)^2\chi_E/4\right)^{1/2} \geq \min\{|B_{\delta/2}|,|E|\}^{1/2}v(p)/2.
\end{align*}

This presents us with two cases. When $B_{\delta/2} \subset E$, $v(p) \leq C_5\|v\|_{L^2(\Omega)}$. On the other hand, if $E \subset B_{\delta/2}$, $v(p)^{1+1/\alpha} \leq C_6\|v\|_{L^2(\Omega)}$. In either case,
$$v(p) \leq C_7\max\{\|v\|_{L^2(\Omega)},\|v\|_{L^2(\Omega)}^{\frac{\alpha}{\alpha+1}}\} \leq C\max\{\|A_1 - A_2\|_{L^\infty(\Omega)}, \|A_1 - A_2\|_{L^\infty(\Omega)}^{\frac{\alpha}{\alpha+1}}\},$$
and the result follows.
\end{proof}

As a consequence, we can prove the analogue of Lemma \ref{lem:BH_subsolution} for divergence form equations.

\begin{lem}\label{lem:div_subsolution}
Let $\mathcal{L}$ be in divergence form with continuous coefficients, with modulus of continuity $\sigma$ as in (\ref{eq:divergence_operator}). Let $\beta' > 1$. There exists sufficiently small $\eta' > 0$ such that the following holds.

Let $u$ be a solution of
\begin{equation*}
\left\{
\begin{array}{rcll}
\mathcal{L}u & \leq & 0 & \text{in } C_{2,\eta'}\\
u & \geq & 1 & \text{in } \{x_n > 1\} \cap C_{2,\eta'}\\
u & \geq & 0 & \text{on } \partial C_{2,\eta'}.
\end{array}
\right.
\end{equation*}
Then,
$$u(te_n) \geq t^{\beta'}, \quad \forall t \in (0,t_\sigma),$$
where
$$C_{2,\eta'} := \{x \in \R^n : \eta'|x'| < x_n < 2\}.$$
The constants $t_\sigma$ and $\eta'$ are positive and depend only on the dimension, $\beta'$, $\sigma$, $\lambda$ and~$\Lambda$.
\end{lem}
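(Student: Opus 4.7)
The plan is to reduce to the constant-coefficient, non-divergence case (Lemma \ref{lem:BH_subsolution}) by freezing coefficients at the origin via Lemma \ref{lem:approximation}, and then iterating the resulting one-step decay at dyadic scales.

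First, I would fix an intermediate exponent $\beta\in(1,\beta')$ and apply Lemma \ref{lem:BH_subsolution} to the constant-coefficient operator $w\mapsto\operatorname{Tr}(A(0)D^2 w)$ to obtain $\eta_\beta>0$ and $c(\beta)>0$. I would set $\eta':=\eta_\beta/2$; the strict inequality is important, as it places the closed top of the sub-cone $C_{\eta_\beta}$ at height $1$ strictly inside $C_{2,\eta'}$, so that interior Harnack supplies a uniform positive lower bound for $u$ on that closed top (which would otherwise fail at the corners where the Dirichlet vanishing meets the lateral boundary). An interior Harnack chain from the normalization region $\{x_n>1\}$ also yields a baseline $u(e_n)\geq c_H>0$.

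The heart of the proof is a one-step decay estimate: there exist $r_\sigma>0$ and $\beta_1\in(\beta,\beta')$ such that $u(re_n/2)\geq 2^{-\beta_1}u(re_n)$ for all $r\leq r_\sigma$. To prove it, I set $w_r(x):=u(rx)/u(re_n)$, a non-negative weak solution of $\operatorname{Div}(A(r\cdot)\nabla w_r)=0$ vanishing on the lateral boundary, with $w_r(e_n)=1$. Since $\|A(r\cdot)-A(0)\|_{L^\infty(C_{\eta_\beta})}\leq\sigma(r/\eta_\beta)$, Lemma \ref{lem:approximation} compares $w_r$ with the solution $\tilde w_r$ of the constant-coefficient problem $\operatorname{Div}(A(0)\nabla\tilde w_r)=\operatorname{Tr}(A(0)D^2\tilde w_r)=0$ on $C_{\eta_\beta}$ with the same boundary data (bounded uniformly by interior Harnack). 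The Harnack lower bound yields $c_1>0$ with $\tilde w_r\geq c_1$ on the closed top of $C_{\eta_\beta}$, so applying Lemma \ref{lem:BH_subsolution} to $\tilde w_r/c_1$ (with right-hand side $0\leq c(\beta)$) gives $\tilde w_r(e_n/2)\geq c_1\cdot 2^{-\beta}$. The approximation error $\epsilon(r)=C\max\{\sigma(r/\eta_\beta),\sigma(r/\eta_\beta)^\tau\}$ tends to $0$, so choosing $r_\sigma$ with $\epsilon(r_\sigma)\leq c_1 2^{-\beta}-2^{-\beta_1}$ (a fixed positive quantity, ensured by taking $\eta'$ small enough that $c_1>2^{\beta-\beta_1}$) yields $w_r(e_n/2)\geq 2^{-\beta_1}$.

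Iterating this one-step estimate from some $r_0\leq r_\sigma$ gives $u(te_n)\geq C\,t^{\beta_1}$ at dyadic $t\leq r_0$ with $C=C(c_H,r_0)>0$. Since $\beta_1<\beta'$, setting $t_\sigma:=\min\{r_0,C^{1/(\beta'-\beta_1)}\}$ absorbs $C$ into the exponent gap and gives $u(te_n)\geq t^{\beta'}$ for dyadic $t\leq t_\sigma$; an interior Harnack at each dyadic scale fills in the non-dyadic values. The principal technical hurdle is the one-step estimate itself: obtaining a uniform positive lower bound $c_1$ for the rescaled boundary data on the closed top of the sub-cone (which would fail at the corners if $\eta'=\eta_\beta$) is what forces the choice $\eta'<\eta_\beta$, and the slack $\beta<\beta_1<\beta'$ is precisely what absorbs the Harnack-constant losses in the final approximation comparison.
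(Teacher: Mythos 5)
Your proposal follows the same broad strategy as the paper (freeze coefficients, compare with the constant-coefficient operator via Lemma \ref{lem:approximation}, apply Lemma \ref{lem:BH_subsolution}, iterate dyadically with a slack between exponents), but the concrete one-step estimate you propose does not close, for two related reasons.

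First, Lemma \ref{lem:approximation} is stated with a constant $C$ that depends on the boundary datum $g$; tracing the proof, it depends on $\|\nabla u_2\|_{L^2(\Omega)}$, hence on $\|g\|_{H^1}$. You apply it to $w_r$ with boundary datum $w_r|_{\partial C_{\eta_\beta}}$, which changes with $r$, and justify uniformity by ``interior Harnack.'' Interior Harnack gives an $L^\infty$ bound on compact subsets of $C_{2,\eta'}$, not on $\partial C_{\eta_\beta}$; in particular it gives nothing near the vertex at the origin, which lies on $\partial C_{2,\eta'}$, and even an $L^\infty$ bound does not control the $H^1$ trace norm. To make your step uniform you would need a Carleson estimate and a boundary Caccioppoli inequality up to the lateral boundary — neither of which you invoke. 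The paper sidesteps all of this by \emph{not} approximating $u$ directly: it solves auxiliary Dirichlet problems with an explicit, fixed boundary datum $c\,2^{-k\gamma}b(x)$ (a Lipschitz cutoff), so after rescaling the $H^1$ norm of the data is a fixed universal quantity and Lemma \ref{lem:approximation} applies with uniform constants. The actual solution $u$ only enters through the comparison principle $u\geq v$.

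Second, and more seriously, your one-step gain $w_r(e_n/2)\geq c_1\,2^{-\beta}-\epsilon(r)\geq 2^{-\beta_1}$ requires $c_1>2^{\beta-\beta_1}$, where $c_1$ is a Harnack-chain lower bound on the entire closed top $\{x_n=1,\ |x'|\leq 1/\eta_\beta\}$ of $C_{\eta_\beta}$. Since $\beta<\beta_1<\beta'$ with $\beta'\in(1,2)$ WLOG, this forces $c_1>1/2$. But $c_1$ is a Harnack constant over a slice of width $\sim 1/\eta_\beta$, where $\eta_\beta$ is the (possibly very small) constant produced by Lemma \ref{lem:BH_subsolution}, and it degenerates as $\eta_\beta\to 0$; it does \emph{not} improve by shrinking $\eta'$, and there is no reason it exceeds $1/2$. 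The paper avoids ever needing such a uniform lower bound on a full wide slice: the induction carries a profile $u\geq c\,2^{-k\gamma}b_1$ (not just a value on the axis), and the nondegeneracy Lemma \ref{lem:BH_subsolution} is applied on a narrower cone $\mathcal{C}'$ with vertex on $\partial C_{2,\eta'}$ whose top sits well inside the previous slice. The resulting decay condition $\bigl(\tfrac{\eta-8\eta'}{2\eta-8\eta'}\bigr)^\beta\geq 2^{-\gamma}+\delta$ involves only geometric ratios and the exponent slack $\beta<\gamma<\beta'$, no Harnack chain constant, and is automatically satisfied for $\eta'$ small. That is the key structural point your proposal misses: you need to propagate a profile across the slice, not a single pointwise value on the axis.
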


\begin{proof}
We will assume without loss of generality that $\beta' \in (1,2)$ and that $\mathcal{L}u = 0$ in $C_{2,\eta'}$. Let $\beta, \gamma$ such that $\beta' > \gamma > \beta > 1$. Let $\eta > 0$ be the one provided by Lemma \ref{lem:BH_subsolution} with exponent $\beta$. Let $\eta' < \eta/8$ and $k_0 \in \mathbb{Z}^+$, to be chosen later. We will prove by induction that $u(2^{-k}e_n) \geq c2^{-k\gamma}$ for all integer $k \geq k_0$ and some $c > 0$. Notice that this implies that $u(te_n) \geq c't^{\gamma}$ for some smaller $c' > 0$ by a direct application of interior Harnack. To end the proof, notice that if $t$ is small enough, since $\beta' > \gamma$,
$$u(te_n) \geq c' t^{\gamma} \geq t^{\beta'}.$$

We proceed now with the induction. First, we define the following auxiliary functions.
$$b(x) = \frac{x_n}{2^{-k}}\tilde{b}\left(\frac{|x'|}{x_n}\right), \quad \tilde{b}(t) = \begin{cases}
1 & t < B,\\
4 - 3t/B & B \leq t < 4B/3,\\
0 & \text{otherwise},
\end{cases}$$
with $B = 3/(2\eta)$. We also write $b_1(x',x_n) = \tilde{b}(|x'|/x_n)$ for convenience of the notation.

We claim that there exists $c > 0$ such that, for all integer $k \geq k_0$, $u \geq c 2^{-k\gamma}b_1$ in the $(n-1)$-dimensional ball $B'_{2^{2-k}/\eta}\times\{2^{-k}\}$.

For the first $k_0$, first observe that $u \geq 0$ everywhere by the maximum principle. Then, apply the interior Harnack inequality to the cylinder $B'_{2^{2-k}/\eta}\times[2^{-k},3/2]$, which is compactly contained in $C_{2,\eta'}$. Since $\sup u = 1$ in the cylinder, we have $u \geq m > 0$, and using that $b_1 \leq 1$, $u \geq mb_1$ in $B'_{2^{2-k}/\eta}\times\{2^{-k}\}$ and we can choose $c$ accordingly.

Now, for the inductive step, let $K = B'_{2^{1-k}/\eta}\times\{2^{-k-1}\}$, which is compactly contained in $C_{2^{-k},\eta'}$, and let $v$ and $v_0$ the solutions of the following Dirichlet problems
\[
\left\{
\begin{array}{rcll}
\mathcal{L}v & = & 0 & \text{in } C_{2^{-k},\eta'}\\
v & = & 2^{-k\gamma}c b(x) & \text{on } \p C_{2^{-k},\eta'},\\
\end{array}
\right.
\ and\quad
\left\{
\begin{array}{rcll}
\mathcal{L}_0v_0 & = & 0 & \text{in } C_{2^{-k},\eta'}\\
v_0 & = & 2^{-k\gamma}c b(x) & \text{on } \p C_{2^{-k},\eta'},\\
\end{array}
\right.
\]
with $\mathcal{L}_0v_0 := \operatorname{Div}(A_0\nabla v_0)$, $A_0 = A(0)$.

Observe that $v = v_0 = 0$ on the \textit{lateral} boundary of the cone $C_{2^{-k},\eta'}$. Then, it is clear that $u \geq v$ from the boundary conditions. Furthermore, by a rescaling of Lemma \ref{lem:approximation}, 
$$\|v - v_0\|_{L^\infty(K)} \leq 2^{-k\gamma}cC\max\{\|A - A_0\|_{L^\infty(C_{2^{-k},\eta'})}, \|A - A_0\|_{L^\infty(C_{2^{-k},\eta'})}^\tau\}.$$

For each $p \in K$, consider the cone $\mathcal{C}'$ with vertex in $(p', \eta'|p'|) \in \p C_{2,\eta'}$ and slope~$\eta$,
$$\mathcal{C}' := \{(x', x_n) \in \mathbb{R}^n : \eta|x'-p'| + \eta'|p'| < x_n < 2^{-k}\}.$$ 

Since $\eta' > \eta$, $\mathcal{C}' \subset C_{2,\eta'}$. Hence, $u \geq 0$ in $\p \mathcal{C}'$. Moreover, by construction, the top part, $\{x_n = 2^{-k}\} \cap \overline{\mathcal{C}'}$ is contained in $B'_{2^{2-k}/\eta}\times\{2^{-k}\}$. Hence, we can apply a rescaled Lemma \ref{lem:BH_subsolution} to the normalized $2^{k\gamma}v_0$, because $\mathcal{L}_0$ has constant coefficients and is also a non-divergence form operator to obtain
$$2^{k\gamma}v_0(p) \geq c\left(\frac{2^{-k-1} - \eta'|p'|}{2^{-k} - \eta'|p'|}\right)^\beta \geq c\left(\frac{2^{-k-1} - 2^{2-k}\eta'/\eta}{2^{-k} - 2^{2-k}\eta'/\eta}\right)^\beta = c\left(\frac{\eta - 8\eta'}{2\eta - 8\eta'}\right)^\beta.$$

Then, passing the information on $v_0$ to $v$,
\begin{align*}
v(p) &\geq c\left(\frac{\eta - 8\eta'}{2\eta - 8\eta'}\right)^\beta - \|v - v_0\|_{L^\infty(K)}\\
&\geq 2^{-k\gamma}c\left(\left(\frac{\eta - 8\eta'}{2\eta - 8\eta'}\right)^\beta - C\max\{\|A - A_0\|_{L^\infty(C_{2^{-k},\eta'})}, \|A - A_0\|_{L^\infty(C_{2^{-k},\eta'})}^\tau\}\right)\\
&\geq 2^{-k\gamma}c(1/2)^{\gamma},
\end{align*}
where for the last inequality we first choose a small $\eta'$ such that
$$\left(\frac{\eta-8\eta'}{2\eta-8\eta'}\right)^\beta \geq \left(\frac{1}{2}\right)^\gamma + \delta,$$
with $\delta > 0$, and then take $k$ large and use that $A$ is continuous. Hence, if $\eta'$ is small enough and $k_0$ is large enough in the first place, the inductive step holds.
\end{proof}

Now we are ready to prove Theorem \ref{thm:BH_rhs}. We divide the proof into three parts: an upper bound, a lower bound, and the proof of the $\mathcal{C}^{0,\alpha}$ regularity of the quotient.

\subsection{Upper bound}
We follow the arguments of \cite{DS20}; see also \cite{Saf10}. The first lemma is a geometric fact that will make subsequent computations easier.

\begin{lem}\label{lem:A_is_convex}
Let $\Omega$ be a Lipschitz domain as in Definition \ref{defn:lipschitz_domain}, with Lipschitz constant $L < 1/16$. Let
$$A = \{x \in \Omega \cap \overline{B_{1 - \delta}} : d(x, \partial\Omega) \geq \delta\} = \{x \in \Omega\cap B_1 : d(x, \partial(\Omega\cap B_1)) \geq \delta\},$$ 
where $\delta \in (0, 1/3)$. Then $A$ is star-shaped with respect to the point $e_n/2$.
\end{lem}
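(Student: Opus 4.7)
Plan: First I would verify that the two presented characterizations of $A$ agree: for $y \in \Omega \cap B_1$, the distance $d(y, \partial(\Omega \cap B_1))$ equals $\min\{d(y, \partial\Omega), 1 - |y|\}$, so the condition $\geq \delta$ amounts to $y \in \overline{B_{1-\delta}}$ together with $d(y, \partial\Omega) \geq \delta$. That $e_n/2 \in A$ is immediate: $1/2 > g(0) = 0$; $|e_n/2| = 1/2 \leq 1 - \delta$; and for every $(z', g(z')) \in \partial\Omega$ with $z' \in \overline{B_1'}$, $|e_n/2 - (z', g(z'))| \geq 1/2 - |g(z')| \geq 1/2 - L > 7/16 > \delta$.

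For the star-shapedness, set $y_s = (1-s)e_n/2 + sx_0$ for $x_0 \in A$ and $s \in [0, 1]$. Containment in $\overline{B_{1-\delta}}$ is immediate by convexity, so it suffices to show $d(y_s, \partial\Omega) \geq \delta$, equivalently $\overline{B_\delta(y_s)} \subset \overline\Omega$. My strategy is the Minkowski identity
\[
\overline{B_\delta(y_s)} = (1-s)\overline{B_\delta(e_n/2)} + s\overline{B_\delta(x_0)},
\]
which writes every point of $\overline{B_\delta(y_s)}$ as $(1-s)p + sq$ with $p \in \overline{B_\delta(e_n/2)}$ and $q \in \overline{B_\delta(x_0)}$. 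Both balls lie in $\overline\Omega$ because $d(e_n/2, \partial\Omega) \geq 1/2 - L > \delta$ and $d(x_0, \partial\Omega) \geq \delta$. The problem therefore reduces to the following key claim: for any such $p$ and any $q \in \overline\Omega$ with $|q| \leq 1$, the entire segment $[p, q]$ lies in $\overline\Omega$.

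To prove the claim, parametrize $z_t = (1-t)p + tq$ and apply the \emph{asymmetric} Lipschitz estimate $g((1-t)p' + tq') \leq g(q') + L(1-t)|p' - q'|$ (i.e.\ comparing $g$ at $(z_t)'$ to its value at $q'$) to get
\[
(z_t)_n - g((z_t)') \geq (1-t)\bigl[p_n - g(q') - L|p' - q'|\bigr] + t\bigl[q_n - g(q')\bigr].
\]
The second bracketed term is $\geq 0$ since $q \in \overline\Omega$. For the first, $p_n \geq 1/2 - \delta$ (from $p \in \overline{B_\delta(e_n/2)}$), $g(q') \leq L|q'| \leq L$, and $|p' - q'| \leq |p'| + |q'| \leq \delta + 1$, so
\[
p_n - g(q') - L|p' - q'| \geq \tfrac{1}{2} - \delta(1 + L) - 2L,
\]
which is at least $1/48 > 0$ under the hypotheses $L < 1/16$ and $\delta < 1/3$. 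Hence $(z_t)_n \geq g((z_t)')$ for all $t \in [0, 1]$, proving the claim. The main technical point lies in selecting the Lipschitz estimate relative to $q'$ rather than a symmetric one: this places the entire ``error'' into the $(1-t)$-coefficient, where it is absorbed by the large lower bound on $p_n$. A symmetric estimate would leave a residual $L\min(t, 1-t)|p' - q'|$ term that does not vanish at the critical endpoint $t = 1$ (where $q$ may lie on $\partial\Omega$), and the resulting inequality would fail.
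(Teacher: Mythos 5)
Your proof is correct, and it takes a genuinely different route from the paper's. The paper first shows that the lower boundary $\partial_l A = \{x \in \partial A : d(x,\partial\Omega) = \delta\}$ is itself the graph of a Lipschitz function $h$ (realized as the upper envelope of the translates $g(x'+t) + \sqrt{\delta^2 - |t|^2}$, $|t|\le\delta$), and then shows by a two-case analysis on $q_n$ (cases $q_n \geq 7/16$ and $q_n < 7/16$) that the segment $\overline{(e_n/2)q}$ stays strictly above the graph of $h$. Your argument instead exploits the Minkowski decomposition $\overline{B_\delta(y_s)} = (1-s)\overline{B_\delta(e_n/2)} + s\overline{B_\delta(x_0)}$ to reduce everything to a single clean segment lemma: any segment from a point $p$ in the small ball around $e_n/2$ to a point $q \in \overline\Omega\cap\overline{B_1}$ stays in $\overline\Omega$. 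The asymmetric Lipschitz bound $g((1-t)p'+tq') \leq g(q') + L(1-t)|p'-q'|$ then splits the verification into two nonnegative brackets, and the arithmetic $\tfrac12 - \delta(1+L) - 2L > 1/48$ under $L<1/16$, $\delta<1/3$ closes it. What your approach buys is a shorter, coordinate-light proof with no case analysis and no need to describe $\partial_l A$ as a separate Lipschitz graph; what the paper's approach buys is the explicit description of $\partial_l A$ as a Lipschitz graph with constant $\leq L$, which is potentially useful information in its own right though not actually reused elsewhere. Two small cosmetic remarks: (i) the step ``$\overline{B_\delta(y_s)}\subset\overline\Omega$ implies $d(y_s,\partial\Omega)\ge\delta$'' tacitly uses that $\mathrm{int}(\overline\Omega)=\Omega$, which holds for a Lipschitz epigraph but is worth a word; (ii) your closing remark about a ``symmetric'' estimate leaving a residual $L\min(t,1-t)|p'-q'|$ is not quite the right description of the alternative (anchoring at $p'$ would leave an $Lt|p'-q'|$ error, which indeed does not vanish at $t=1$), but the underlying point — that the Lipschitz estimate must be anchored at $q'$ because the only information about $q_n$ is $q_n\ge g(q')$ — is exactly the right insight.
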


\begin{proof}
It is easy to check that $d(A, \partial(\Omega \cap B_1)) = \delta$, and that $e_n/2 \in A$ (since $L < 1/16$, $d(e_n/2, \partial\Omega) \geq 1/(2\sqrt{L^2+1}) > 1/3$).

We distinguish the \textit{upper} and the \textit{lower} boundaries of $A$ as:
$$\partial_u A = \{x \in \partial A : d(x, \partial\Omega) > \delta\}, \quad \partial_l A = \{x \in \partial A : d(x, \partial\Omega) = \delta\}.$$

The first step is proving that $\partial_l A$ is a Lipschitz graph with the same or lower Lipschitz constant. For this, consider the set $\Omega_\delta = \{x \in B'_{1-\delta}\times\R : d(x, \partial\Omega) > \delta\}$, which contains the points above $\partial_l A$. For every vertical line $l$ passing through $(x',0)$, with $x' \in B_{1-\delta}'$, the set $l \cap \Omega_\delta$ is not empty, so we can define $h : B_{1-\delta}' \to \R$ as 
$$h(x') = \inf \{x_n : d((x',x_n),\partial\Omega) > \delta\}.$$

Then, for a given $x' \in B_{1-\delta}'$, $(x',y) \in \Omega_\delta$ for all $y > h(x')$. Indeed, for every point $z = (z', z_n)$ in $\partial\Omega$, either $|z' - x'| > \delta$, and hence $d((x', y),z) > \delta$, or $|z' - x'| \leq \delta$. In this case, $z_n = g(z') \leq g(x') + L|z'-x'| < g(x') + \delta/16 \leq h(x') - \delta + \delta/16 < h(x')$, and then, $d((x',y),z) > d((x',x_n),z) = \delta$, because $y > h(x') > z_n$.

In any case, we have proven that $\Omega_\delta = \{(x',x_n) \in B_{1-\delta}'\times\R : x_n > h(x')\}$. Moreover, this shows that $\partial_l A$ is a subset of the graph of $h$. Now we want to see that $h$ is Lipschitz. Notice that we can also define $h$ with the complement set,
$$h(x') = \sup \{x_n : d((x',x_n),\partial\Omega) \leq \delta\} = \sup\{x_n : d((x',x_n),\partial\Omega) = \delta\}.$$

This can be seen as the superior envolvent of a union of spheres of radius $\delta$ centered at every point of $\partial\Omega$, hence
$$h(x') = \sup \{g(x'+t)+\sqrt{\delta^2-|t|^2}, t \in B'_\delta\}.$$

Since this is a supremum of equi-Lipschitz functions, $h$ is also Lipschitz with the same or lower constant, $L' \leq L < 1/16$. From $g(0) = 0$ we can also derive $h(0) \geq \delta$, and $h(0) \leq \delta\sqrt{L^2+1} < 1.02\delta$.

Now we will see that $A$ is star-shaped with \textit{center} at $e_n/2$, constructing a segment from $e_n/2$ to every point in $A$ that lies entirely inside $A$. Let $p \neq e_n/2$ be a point in $A$, and let $q = (q', q_n)$ be the intersection of the line through $p$ and $e_n/2$ and $\partial A$, that lies on the side of $p$ and is furthest from $e_n/2$. We will see later that there is only one intersection at each side, but considering the furthest is enough for now.

If $q$ lies in $\partial_l A$, $q_n = h(q')$. If $q$ lies in $\partial_u A$, the point is above $\partial_l A$ and $q_n > h(q')$. In any case, we have always $q_n \geq h(q')$. It is clear that the segment $\overline{(e_n/2)q}$, that can be parametrised by $\{(tq', (1-t)/2 + tq_n), t \in [0, 1]\}$ is contained in $\overline{B_{1-\delta}}$. We will prove that it lies entirely \textit{above} $\partial_l A$ (except maybe in the point $q$), so it has not other intersections with $\partial A$ besides $q$. We distinguish two cases:

If $q_n \geq 7/16$, for any point $tq'$ inside the segment joining $0$ and $q'$ in $B_{1-\delta}'$ (this means $t \in (0,1)$), using that $h$ is Lipschitz,
$$h(tq') \leq h(0) + L|tq'| < 1.02\delta + t/16 < 0.34 + t/16.$$
Moreover, the height of the segment $\overline{(e_n/2)q}$ above the point $tq'$ is
$$(1-t)/2+tq_n \geq 0.5 + (q_n - 0.5)t \geq 0.5 + (h(q') - 0.5)t \geq 0.5 - t/16,$$
and $0.5 - t/16 > 0.34 + t/16$ because $t/8 < 1/8 < 0.5-0.34 = 0.16$. Combining the two inequalities, $h(tq') < (1-t)/2 + tq_n$ as required.

On the other hand, if $q_n < 7/16$, $h(q') < 7/16$ as well. Since $h$ is Lipschitz,
$$h(tq') \leq h(q') + L|q'-tq'| < q_n + (1-t)/16 = (q_n+1/16) - (1/16)t.$$
The height of the segment $\overline{(e_n/2)q}$ above the point $tq'$ is 
$$(1-t)/2 + tq_n = 1/2 - (1/2-q_n)t,$$
and $1/2 - (1/2-q_n)t > (q_n+1/16) - (1/16)t$ for $t \in (0,1)$ by a simple calculation.

Hence, in any case the segment joining $e_n/2$ and $q$ crosses $\partial A$ at $q$ for the first time, implying $A$ is star-shaped.
\end{proof}

Now, we derive an interior Harnack inequality for domains with the shape we want to consider.

\begin{lem}\label{lem:Harnack_interior_Lipschitz_rhs}
Let $\Omega$ be a Lipschitz domain as in Definition \ref{defn:lipschitz_domain}, with Lipschitz constant $L < 1/16$. Let $\delta \in (0, 1/3)$. Let $\L$ be as in (\ref{eq:non-divergence_operator}) or (\ref{eq:divergence_operator}). Let $u$ be a positive solution, in the $L^n$-viscosity or the weak sense, of
\begin{equation*}
\left\{
\begin{array}{rcll}
\mathcal{L}u & = & f & \text{in } \Omega \cap B_1\\
u & = & 0 & \text{on } \partial\Omega \cap B_1,
\end{array}
\right.
\end{equation*}
with $f \in L^n(B_1)$. Let $A = \{x \in \Omega \cap \overline{B_{1 - \delta}} : d(x, \partial\Omega) \geq \delta\}$. Then,
$$\sup\limits_{A} u \leq C(\inf\limits_A u + \|f\|_{L^n(B_1)}),$$
with $C$ depending on the dimension, $\delta$, $\lambda$ and $\Lambda$, but not on the particular shape of~$\Omega$.
\end{lem}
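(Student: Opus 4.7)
The plan is a classical Harnack chain argument that exploits the star-shapedness of $A$ with respect to $e_n/2$ provided by Lemma \ref{lem:A_is_convex}.

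First, I would record the following standard interior Harnack inequality with right hand side on a fixed ball: if $B_r(p) \subset \Omega \cap B_1$ and $u > 0$ solves $\mathcal{L}u = f$ in $B_r(p)$ in the appropriate sense, then
$$\sup_{B_{r/2}(p)} u \leq C\bigl(\inf_{B_{r/2}(p)} u + \|f\|_{L^n(B_r(p))}\bigr),$$
with $C$ depending only on the dimension, $\lambda$, $\Lambda$ and $r$. In the non-divergence case this follows by noting $\mathcal{M}^-(D^2 u) \leq |f|$ and $\mathcal{M}^+(D^2 u) \geq -|f|$, then combining the weak Harnack inequality for supersolutions of the Pucci operators with Theorem \ref{thm:mig_Harnack_sup_norma}; in the divergence case it is an analogous consequence of De Giorgi--Nash--Moser together with Theorem \ref{thm:ABP_divergence}. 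This is the form of Krylov--Safonov/Moser Harnack with $L^n$ right hand side that is standard in the literature.

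Next, for any $p \in A$, the definition of $A$ gives $d(p, \partial(\Omega \cap B_1)) \geq \delta$, so $B_\delta(p) \subset \Omega \cap B_1$. By Lemma \ref{lem:A_is_convex}, for every $x \in A$ the entire segment from $e_n/2$ to $x$ lies in $A$. I would place along this segment a sequence of points $p_0 = e_n/2,\, p_1,\, \dots,\, p_N = x$ with consecutive points at distance at most $\delta/4$. Since $\operatorname{diam}(A) \leq 2$, one may take $N \leq 8/\delta + 1$, a bound depending only on $\delta$ and the dimension, not on the particular $\Omega$. Each $p_i \in A$, hence $B_{\delta/2}(p_i) \subset \Omega \cap B_1$, and consecutive balls $B_{\delta/4}(p_i)$ and $B_{\delta/4}(p_{i+1})$ intersect. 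Applying the interior Harnack above with $r = \delta/2$ at each $p_i$ and chaining through the overlaps yields, iteratively,
$$u(e_n/2) \leq C^N \bigl(u(x) + \|f\|_{L^n(B_1)}\bigr),$$
and symmetrically $u(x) \leq C^N(u(e_n/2) + \|f\|_{L^n(B_1)})$ for every $x \in A$. Combining both inequalities and taking supremum and infimum over $A$ delivers
$$\sup_A u \leq C'\bigl(\inf_A u + \|f\|_{L^n(B_1)}\bigr)$$
with $C'$ depending only on the dimension, $\delta$, $\lambda$ and $\Lambda$.

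The only point requiring care is that the Harnack chain must be realizable with a number of balls depending only on $\delta$ and the dimension, independently of the particular Lipschitz domain $\Omega$. This is exactly what Lemma \ref{lem:A_is_convex} supplies: the star-shapedness of $A$ with respect to the fixed interior point $e_n/2$ guarantees the straight-line segment from $e_n/2$ to any $x \in A$ lies in $A$, so a uniform chain always exists. Beyond that, the argument is routine, as every ball in the chain is comfortably inside $\Omega \cap B_1$ and the number of chaining steps is controlled a priori by $\operatorname{diam}(A) \leq 2$.
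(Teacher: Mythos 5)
Your proof is correct and takes essentially the same route as the paper's: a Harnack chain along the segment from any $x\in A$ to $e_n/2$, with a number of balls controlled only by $\delta$, then chaining any two points of $A$ through $e_n/2$. You are slightly more explicit than the paper in invoking Lemma \ref{lem:A_is_convex} to justify that the segment stays in $A$ (the paper leaves this implicit, relying on the star-shapedness just established); both are the same argument.
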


\begin{proof}
Let $x \in A$, and we will denote $y = e_n/2$ to simplify the notation. Since $A \subset B_{1-\delta}$, $|x - y| < 2$. We define
$$m := \left\lceil\frac{8}{\delta}\right\rceil.$$

Take $x_0 = x, \ldots, x_m = y$ a uniform partition on the segment $\overline{xy}$. It is clear that $|x_{i+1} - x_i| < \delta / 4$. Then, consider the balls $B_\delta(x_i)$. We apply the interior Harnack inequality to obtain that
$$\sup\limits_{B_{\delta/2}(x_i)} u \leq C\left(\inf\limits_{B_{\delta/2}(x_i)}u + \delta\|f\|_{L^n(B_\delta(x_i))}\right).$$

In particular, $u(x_i) \leq C(u(x_{i+1}) + \delta\|f\|_{L^n(B_i)}) \leq C(u(x_{i+1}) + \delta\|f\|_{L^n(B_1)})$, and iterating this, $u(y) \leq C^{m + 1}u(x) + C'\|f\|_{L^n(B_1)}$. Taking the points in reverse order yields $u(x) \leq C^{m + 1}u(y) + C'\|f\|_{L^n(B_1)}$.

Now take $x, z \in A$, and apply the inequalities between $u(x)$ and $u(y)$ to $u(y)$ and $u(z)$. We can put them together finally to get
$$u(x) \leq C^{2(m+1)}u(z) + C''\|f\|_{L^n(B_1)}, \quad u(z) \leq C^{2(m+1)}u(x) + C''\|f\|_{L^n(B_1)}.$$

Finally, notice that $C$, $m$ and $C''$ do not depend on the shape of $\Omega$.
\end{proof}

The next step is the following lemma, that shows that the condition $u > 0$ and $u(e_n/2) \leq 1$ implies $\|u\|_{L^p(B_1)} \leq c_p$ in Theorem \ref{thm:BH_rhs}.

\begin{lem}\label{lem:BH_rhs_positiva_implica_lp}
Let $\Omega$ a Lipschitz domain as in Definition \ref{defn:lipschitz_domain}, with Lipschitz constant $L < 1/16$. Let $\L$ be as in (\ref{eq:non-divergence_operator}) or (\ref{eq:divergence_operator}). Let $u$ be a positive solution, in the \mbox{$L^n$-viscosity} or the weak sense, of
\begin{equation*}
\left\{
\begin{array}{rcll}
\mathcal{L}u & = & f & \text{in } \Omega \cap B_1\\
u & = & 0 & \text{on } \partial\Omega \cap B_1,
\end{array}
\right.
\end{equation*}
such that $u(e_n/2) \leq 1$, with $f \in L^n(B_1)$. Then, there exist $p, C_p > 0$ such that 
$$\|u\|_{L^p(B_1)} \leq C_p,$$
with $p$ and $C_p$ only depending on the dimension, $\lambda$, $\Lambda$ and $\|f\|_{L^n(B_1)}$. 
\end{lem}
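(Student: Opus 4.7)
The strategy is to upgrade the single pointwise bound $u(e_n/2)\le 1$ to a quantitative pointwise decay estimate
\begin{equation*}
u(x)\le C\,(1+\|f\|_{L^n(B_1)})\,d(x)^{-N},\qquad x\in\Omega\cap B_1,
\end{equation*}
where $d(x):=\operatorname{dist}(x,\partial(\Omega\cap B_1))$ and $N>0$ depends only on the dimension, $\lambda$ and $\Lambda$, and then to integrate this estimate. Once such a bound is in hand, extending $u$ by $0$ to $B_1\setminus\Omega$ (consistent with the Dirichlet condition) and using that $\partial(\Omega\cap B_1)$ is Lipschitz gives $|\{x\in\Omega\cap B_1:d(x)<t\}|\le Ct$ for $t\in(0,1)$, hence $\int_{\Omega\cap B_1}d(x)^{-Np}\,dx<\infty$ for every $p<1/N$; choosing $p:=1/(2N)$ yields $\|u\|_{L^p(B_1)}\le C_p(1+\|f\|_{L^n(B_1)})$.

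To establish the pointwise bound, I would connect an arbitrary $x\in\Omega\cap B_1$ to $e_n/2$ by a Harnack chain of \emph{logarithmic}, not linear, length in $d(x)^{-1}$ (the naive chain from Lemma~\ref{lem:Harnack_interior_Lipschitz_rhs} has Harnack constant $C_0^{1/\delta}$, which is too large for any $L^p$ integrability). The key geometric input is that $L<1/16$ provides a uniform interior cone condition for $\Omega\cap B_1$ pointing toward $e_n/2$: starting from any interior point $y$ with $r:=d(y)\le 1/3$, a bounded number of sub-steps inside $B_{r/2}(y)\subset\Omega\cap B_1$ directed toward $e_n/2$ reaches a new point $y'$ with $d(y')\ge 2r$. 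Iterating the doubling step yields a sequence $x=x_0,x_1,\ldots,x_k$ with $k\le C_1\log_2(d(x)^{-1})+C_2$, $d(x_i)\ge 2^i d(x)$, each ball $B_{d(x_i)/2}(x_i)\subset\Omega\cap B_1$, and $x_k$ at distance $\gtrsim 1$ from the boundary, so that standard interior Harnack compares $u(x_k)$ to $u(e_n/2)\le 1$. At each step the interior Harnack inequality with right hand side (valid in both the $L^n$-viscosity and weak setting via the ABP bound of Theorem~\ref{thm:ABP} or Theorem~\ref{thm:ABP_divergence}) gives
\begin{equation*}
u(x_i)\le C_0\,u(x_{i+1})+C_0\,d(x_i)\,\|f\|_{L^n(B_1)},
\end{equation*}
and iterating through the chain produces
\begin{equation*}
u(x)\le C_0^k u(e_n/2)+C_0\,\|f\|_{L^n(B_1)}\sum_{i=0}^{k-1}C_0^i d(x_i)\le C\,(1+\|f\|_{L^n(B_1)})\,d(x)^{-N},
\end{equation*}
since $k=O(\log(1/d(x)))$ and $d(x_i)\le 2^i d(x)$ make both $C_0^k$ and the geometric sum polynomial in $d(x)^{-1}$, with exponent $N$ depending only on $C_0$.

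The hard part is executing the doubling Harnack chain rigorously while respecting the geometry of $\Omega\cap B_1$ near both the graph portion $\partial\Omega\cap B_1$ and the spherical portion $\Omega\cap\partial B_1$ of the boundary, and in particular near the corner $\partial\Omega\cap\partial B_1$ where the two pieces meet. The smallness of $L$ (together with the interior star-shapedness already exploited in Lemma~\ref{lem:A_is_convex}) is precisely what furnishes the uniform interior cone geometry, and hence a direction from every interior point along which the distance to $\partial(\Omega\cap B_1)$ grows at a fixed linear rate, which is what makes the doubling sub-step possible.
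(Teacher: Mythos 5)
Your approach is essentially the same as the paper's: construct a Harnack chain from an arbitrary interior point toward the interior reference point so that the distance to the boundary doubles after a bounded number of steps (hence the chain has only logarithmic length), deduce that $u$ grows at most polynomially in $1/d(x)$, and integrate. The paper implements this by an induction over the nested sets $A_k=\{x\in\Omega\cap B_{1-2^{-k}}:d(x,\partial\Omega)>2^{-k}\}$, proving a recursion $a_{k+1}\le c_1a_k+c_2$ for $a_k=\sup_{A_k}u$ and then summing $\sum_j |A_{j+1}\setminus A_j|\,(cb^j)^p$; your pointwise bound $u\le C(1+\|f\|_{L^n})d^{-N}$ is exactly the dual formulation of this recursion, and the star-shapedness of $A_k$ (Lemma~\ref{lem:A_is_convex}) is what the paper uses to keep the chain balls inside $\Omega\cap B_1$, addressing the ``hard part'' you flag.

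One concrete slip in the sketch: if $r=d(y)$, then every point of $\overline{B_{r/2}(y)}$ has distance at most $3r/2$ from $\partial(\Omega\cap B_1)$, so no sub-chain confined to $B_{r/2}(y)$ can reach $y'$ with $d(y')\ge 2r$. The sub-steps must be allowed to travel a total distance of several multiples of $r$ (the paper takes the segment to the next level set $\partial A_k$, of length up to $\sim 16\cdot 2^{-k-1}$, covered by $32$ balls of radius $2^{-k-1}$), with the ball radii growing along the chain as $d$ grows. With that correction the argument goes through exactly as in the paper.
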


\begin{proof}
We will prove that there exist a sequence $\{a_k\}$ and some positive $c$ and $b$ such that $\sup u \leq a_k \leq cb^k$ in the sets $A_k = \{x \in \Omega \cap B_{1-2^{-k}} : d(x,\partial\Omega) > 2^{-k}\}$ for all $k \geq 3$. This means roughly that $\sup u$ grows at most like $d^{-K}$ for some big $K$, and then $u^p$ will be integrable if $p > 0$ is small enough.

First, by Lemma \ref{lem:Harnack_interior_Lipschitz_rhs} applied with $\delta = 1/8$, together with the fact that $u \leq 1$ in at least a point of $A_3$, $\sup\limits_{A_3} \leq C(1+\|f\|_{L^n(B_1)}) =: a_3$.

Now, we will show that $a_{k+1} \leq c_1a_k + c_2$, with $c_i > 0$. This easily implies by induction that $a_k \leq cb^k$ for some $b, c > 0$.

Take $x \in A_{k+1}$. We will prove that there exists a close $y \in A_k$ such that\linebreak $d(x,y) < 2^{-k+3}$. In fact, let $z = (z',z_n)$ be the intersection of $\partial A_k$ with the segment $\overline{(e_n/2)x}$. Proving $d(x,z) < 2^{-k+3}$ will suffice, because there are points in $A_k$ arbitrarily close to $z$.

We can parametrise the segment as $\psi(t)=t(e_n/2) + (1-t)x$, with $t \in (0,1)$. Then, $z = \psi(t_*)$, where we define
$$t_* := \inf I_k = \inf\{t \in (0,1) : \psi(t) \in A_k\}.$$

Since $A_k$ is star-shaped with rays coming from $e_n/2$ by Lemma \ref{lem:A_is_convex}, and it contains an open ball around $e_n/2$, $I_k$ is an open interval. Looking closely at the definition of $A_k$, we can write $I_k$ as the intersection of two conditions:
$$I_k = (t_1, 1) \cap (t_2, 1) := \{t \in (0,1) : \psi(t) \in B_{1-2^{-k}}\} \cap \{t \in (0,1) : d(\psi(t), \partial\Omega) > 2^{-k}\}.$$

First, the condition $\psi(t) \in B_{1-2^{-k}}$ means $|te_n/2+(1-t)x| < 1-2^{-k}$, which is automatically fulfilled when $t \geq 2^{-k+1}$, because then
$$|te_n/2+(1-t)x| \leq t/2 + (1-t)|x| < t/2 + (1-t) = 1 - t/2.$$

Hence $t_1 \leq 2^{-k+1}$. To finish this argument we need an upper bound on $t_2$ as well. Take an arbitrary $t \in [2^{-k+2},1]$, and we will see that $d(\psi(t),\partial\Omega) > 2^{-k}$. To do so, we will prove that $\psi_n(t) > g(\psi'(t)) + 2^{-k}\sqrt{L^2+1}$, with $\psi(t) = (\psi'(t), \psi_n(t))$ as usual. Since $g$ is Lipschitz with constant $L < 1/16$, $|g(x')| < 1/16$, and 
$$g(\psi'(t)) \leq g(x') + L|x'-\psi'(t)| < g(x') + t/16,$$
we deduce
\begin{equation*}
\begin{split}
\psi_n(t) &= t/2 + (1-t)x_n > t/2 + (1-t)g(x') = g(x') + t(1/2-g(x'))\\
&> g(x') + 7t/16,\\
\end{split}
\end{equation*}
and
$$g(\psi'(t)) + 2^{-k}\sqrt{L^2+1} < g(x') + t/16 + 2^{-k}\sqrt{L^2+1} < g(x') + t/16 + 3\cdot2^{-k-1}.$$
Finally, since $6t/16 \geq 3\cdot2^{-k-1}$, $\psi_n(t) > g(\psi'(t)) + 2^{-k}\sqrt{L^2+1}$ as desired and $t_2 \leq 2^{-k+2}$. Now, $t_* = \max\{t_1,t_2\} \leq 2^{-k+2}$, and this implies $d(x, z) = t_*d(x,e_n/2) < 2t_* \leq 2^{-k+3}$.

Now, for a given $x \in A_{k+1}$, we have $y \in A_k$ such that $d(x,y) < 2^{-k+3}$. Consider a uniform partition in 32 pieces of the segment $\overline{xy}$, $p_0 = y, \ldots, p_{31} = x$. Since $A_{k+1}$ is star-shaped, $\overline{xy} \subset A_{k+1}$, so the balls $B_i = B_{2^{-k-1}}(p_i)$ are completely contained in $\Omega \cap B_1$. Now, $d(p_i, p_{i+1}) < 2^{-k-2}$, and applying the interior Harnack inequality we get $u(p_{i+1}) \leq C(u(p_i) + \|f\|_{L^n(B_i)}) \leq C(u(p_i) + \|f\|_{L^n(B_1)})$. Iterating this inequality, $u(y) \leq c_1u(x) + c_2$, for some constants $c_1, c_2$ only depending on the dimension and~$\|f\|_{L^n(B_1)}$.

Now we know that $\sup u \leq cb^k$ in $A_k$. Let $p = \log_b \sqrt{2}$, and compute the $L^p$ norm of $u$:
\begin{equation*}
\begin{split}
\int_{B_1}|u|^p &= \int_{A_3}|u|^p + \sum\limits_{j = 3}^\infty\int_{A_{j+1} \setminus A_j}|u|^p \leq |A_3|cb^{3p} + \sum\limits_{j = 4}^\infty|A_j \setminus A_{j - 1}|c(b^p)^j\\
&\leq c\left(2\sqrt{2}|A_3| + \sum\limits_{j = 3}^\infty|A_{j+1} \setminus A_j|2^{j/2}\right) \leq c\left(2\sqrt{2}|B_1| + \sum\limits_{j = 3}^\infty2^{-j}V(n)2^{j/2}\right)\\
&= c\left(2\sqrt{2}|B_1| + V(n)(1+\sqrt{2})/2\right) =: C_p^p,
\end{split}
\end{equation*}
where we have used that $|A_{j+1} \setminus A_j| \leq 2^{-j}V(n)$. We will prove it now.
$$A_{j+1} \setminus A_j \subset (B_1 \setminus B_{1-2^{-j}}) \cup \{x \in B_1 : d(x, \partial\Omega) \leq 2^{-j}\}.$$

On the one hand, $|B_1 \setminus B_{1-2^{-j}}| \leq 2^{-j}|\partial B_1|$, where $|\partial B_1|$ is the $(n-1)$-dimensional measure of the boundary of the ball $B_1$. On the other hand, the second set is a subset of the \textit{thickening} of $\partial\Omega$ in the $e_n$ direction, with height $2^{-j}\sqrt{L^2+1}$ at each side:
$$\left\{(x',x_n) \in B_1' \times \R : |x_n - g(x')| \leq 2^{-j}\sqrt{L^2+1}\right\}.$$

The measure of this second set is $2^{-j+1}\sqrt{L^2+1}|B_1'|$ (again using the measure of $\R^{n-1}$). Hence, defining $V(n) = |\partial B_1|+2\sqrt{1/16^2+1}|B_1'|$ serves our purpose.
\end{proof}

The previous Lemma \ref{lem:BH_rhs_positiva_implica_lp} implies that $u \in L^p(B_1)$. Then, we can use Theorem \ref{thm:mig_Harnack_sup_norma} to obtain the following $L^\infty$ bound on $u$:

\begin{prop}\label{prop:BH_rhs_fitada}
Let $\Omega$ be a Lipschitz domain as in Definition \ref{defn:lipschitz_domain}, $\L$ as in (\ref{eq:non-divergence_operator}) or (\ref{eq:divergence_operator}) and $r \in (0,1)$. Let $u$ be a $L^n$-viscosity or weak solution of
\begin{equation*}
\left\{
\begin{array}{rcll}
\mathcal{L}u & = & f & \text{in } \Omega \cap B_1\\
u & = & 0 & \text{on } \partial\Omega \cap B_1,
\end{array}
\right.
\end{equation*}
with $f \in L^n(\Omega)$.

Then, for all $p > 0$, if $u \in L^p(\Omega\cap B_1)$, $u$ is bounded in $B_r$, i.e.
$$\sup\limits_{B_r} u \leq K(\|u\|_{L^p(B_1)}+ \|f\|_{L^n(\Omega)}),$$
with $K = K(n, p, \lambda, \Lambda, r)$.
\end{prop}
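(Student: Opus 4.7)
The plan is to reduce the proposition to the interior $L^\infty$ bound for subsolutions stated in Theorem~\ref{thm:mig_Harnack_sup_norma}, by extending the positive part of $u$ by zero across the Dirichlet portion of the boundary.

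First, since $\sup_{B_r} u \leq \sup_{B_r} u^+$ with $u^+ := \max(u,0)$, it suffices to bound $u^+$. The function $u^+$ is the maximum of the two subsolutions $u$ (which satisfies $\mathcal{L}u \geq f \geq -|f|$) and $0$, so a standard pasting argument shows that $u^+$ is itself an $L^n$-viscosity (respectively, weak) subsolution of $\mathcal{L}u^+ \geq -|f|$ in $\Omega \cap B_1$. The Dirichlet condition $u = 0$ on $\partial\Omega \cap B_1$ passes to $u^+$, so I would then extend $u^+$ by $0$ across this portion of the boundary to obtain $\tilde u : B_1 \to \mathbb{R}$, which is continuous (respectively, lies in $H^1_{\mathrm{loc}}(B_1)$) by construction.

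The core technical step is then that $\tilde u$ is an $L^n$-viscosity (resp.\ weak) subsolution of $\mathcal{L}\tilde u \geq -|f|\chi_{\Omega \cap B_1}$ in all of $B_1$. Inside $\Omega$ this is the subsolution property already established; on $B_1 \setminus \overline{\Omega}$ it is trivial because $\tilde u \equiv 0$ and the right hand side vanishes; at a point $x_0 \in \partial\Omega \cap B_1$ it is a pasting claim. In the viscosity framework this follows from the standard fact that the maximum of two $L^n$-viscosity subsolutions is again one, applied to $u^+$ and the constant $0$ globally on $B_1$, the vanishing boundary condition making the two competitors agree across $\partial\Omega$. In the weak framework, the zero trace of $u^+$ on $\partial\Omega \cap B_1$ places $\tilde u$ in $H^1_{\mathrm{loc}}(B_1)$, and the sign $A\nabla u^+ \cdot \nu \leq 0$ (with $\nu$ the outer normal to $\Omega$, using $u^+ \geq 0$ inside and $u^+ = 0$ on $\partial\Omega \cap B_1$) makes the boundary term arising in the integration by parts favourable.

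Granted the global subsolution property of $\tilde u$, I would cover $\overline{B_r}$ by finitely many balls $B_\rho(x_i) \subset B_1$ of radius $\rho = (1-r)/4$ and apply the rescaled version of Theorem~\ref{thm:mig_Harnack_sup_norma} to $\tilde u$ in each of them, obtaining
$$\sup_{B_r} u \;\leq\; \sup_{B_r} \tilde u \;\leq\; K\bigl(\|\tilde u\|_{L^p(B_1)} + \|f\|_{L^n(B_1)}\bigr) \;\leq\; K\bigl(\|u\|_{L^p(\Omega \cap B_1)} + \|f\|_{L^n(\Omega)}\bigr),$$
with $K = K(n, p, \lambda, \Lambda, r)$, which is the stated estimate. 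The main obstacle is the pasting claim across $\partial\Omega \cap B_1$: although standard, it must be verified separately in the $L^n$-viscosity and in the weak frameworks, and each verification uses crucially the vanishing Dirichlet condition on $\partial\Omega \cap B_1$.
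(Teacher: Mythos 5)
Your proposal is essentially the paper's argument: the paper also sets $v = u^+$, extends $v$ (and $f$) by zero across $\partial\Omega \cap B_1$, observes that the extension is a global subsolution in $B_1$, and concludes by Theorem~\ref{thm:mig_Harnack_sup_norma} plus a covering argument. You carry out the pasting step with more care (and correctly write the subsolution inequality as $\mathcal{L}\tilde u \geq -|f|\chi_{\Omega\cap B_1}$ rather than the paper's slightly imprecise $\mathcal{L}v \geq f$, which could fail on $\{u<0\}$ where $f>0$), but since only $\|f\|_{L^n}$ enters the final bound this distinction is harmless and both versions yield the stated estimate.
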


\begin{proof}
Denote $v = u^+$, extending $v$ by zero in $B_1 \setminus \Omega$, and extend $f$ by zero in $B_1 \setminus \Omega$. Then, it is easy to check that $\mathcal{L}v \geq f$ in $B_1$. Now use Theorem \ref{thm:mig_Harnack_sup_norma} and a covering argument to get
$$\sup\limits_{B_r} v \leq C_p(r)(\|u\|_{L^p(B_1)}+ \|f\|_{L^n(B_1)}).$$
The conclusion trivially follows.
\end{proof}

\subsection{Lower bound}

The next step is to construct an iteration to see that solutions of $\mathcal{L}u = f$ that are sufficiently positive away from the boundary, and not very negative near it, are actually positive everywhere. As we have a right hand side $f \in L^q$, we need to be careful with the scaling, so we cannot use directly interior Harnack estimates to prove positivity, and we will need the nondegeneracy estimates in  Lemmas \ref{lem:BH_subsolution} and \ref{lem:div_subsolution}.

\begin{lem}\label{lem:BH_rhs_iteracio}
Let $q > n$, $\kappa > 1$, and let $\mathcal{L}$ be as in (\ref{eq:non-divergence_operator}) or (\ref{eq:divergence_operator}). There exists $L_* = L_*(q,n,\kappa,\lambda,\Lambda)$ such that the following holds.

Let $\Omega$ be a Lipschitz domain as in Definition \ref{defn:lipschitz_domain} with constant $L < L_*$. Let $f$ be such that $\|f\|_{L^q(B_1)} \leq c_0$. Let $u$ be a $L^n$-viscosity or weak solution of
\begin{equation}\label{eq:condicions_iteracio_rhs}
\left\{
\begin{array}{rcll}
\mathcal{L}u & = & f & \text{in } \Omega \cap B_1\\
u & = & 0 & \text{on } \partial\Omega,
\end{array}
\right.
\text{with }
\left\{
\begin{array}{rcll}
u & \geq & 1 & \text{in } \Omega \cap \{x \in B_1 : d(x, \partial\Omega) > \delta\}\\
u & \geq & -\varepsilon & \text{in } \Omega \cap B_1.
\end{array}
\right.
\end{equation}
Then,
\begin{equation*}
\left\{
\begin{array}{rcll}
u & \geq & \rho^\kappa & \text{in } \Omega \cap \{x \in B_\rho : d(x, \partial\Omega) > \rho\delta\}\\
u & \geq & -\rho^\kappa\varepsilon & \text{in } \Omega \cap B_\rho
\end{array}
\right.
\end{equation*}
for some sufficiently small $\rho, \varepsilon, \delta, c_0 \in (0, 1)$, with $\rho > 2\delta$, only depending on the dimension, $\kappa$, $q$, $\lambda$, $\Lambda$, as well as $\sigma$, when applicable.
\end{lem}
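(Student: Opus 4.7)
I will prove the lemma by combining the cone nondegeneracy of Lemmas~\ref{lem:BH_subsolution} (resp.~\ref{lem:div_subsolution}) with the ABP estimate. Fix an auxiliary exponent $\beta \in (1, \kappa)$ and let $\eta = \eta(\beta)$ be the cone opening supplied by Lemma~\ref{lem:BH_subsolution}. Set $L_* := \eta/2$ so that every upward cone $x_0 + C_\eta^R := \{x_0 + y : \eta|y'| < y_n < R\}$ based at a boundary point is contained in $\Omega$. The four small parameters will be chosen in the order $\rho$ (small, depending on $\beta,\kappa$), then $\delta = \rho/8$ (so that $\rho > 2\delta$ and the top of cones of height $R := 4\delta$ lies in $\{d(\cdot,\partial\Omega) > \delta\}$ by the Lipschitz geometry), then $\varepsilon$ and $c_0$.

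For the interior lower bound, fix $x \in B_\rho$ with $d(x,\partial\Omega) > \rho\delta$, let $x_0 = (x', g(x'))\in\partial\Omega$ be the point directly below $x$, and write $x = x_0 + s e_n$ with $s = x_n - g(x') > \rho\delta$. If $s \geq R = 4\delta$, then $d(x,\partial\Omega) \geq s/\sqrt{1+L^2} > \delta$, placing $x$ in the interior region $\{d > \delta\}$ where $u \geq 1 \geq \rho^\kappa$; otherwise $s \in (\rho\delta, R)$ and $x$ lies on the axis of $x_0 + C_\eta^R$. Rescaling $\tilde u(y) := u(x_0+Ry)+\varepsilon$ onto the unit cone: the top disk is in $\{d > \delta\}$ so $\tilde u \geq 1$ there, the lateral boundary is in $\Omega\cap B_1$ so $\tilde u \geq 0$ there, and $\|\tilde f\|_{L^n(C_\eta)} \leq CR^{2-n/q} c_0$ by H\"older's inequality (this is precisely where $q > n$ enters). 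Applying ABP to $\varphi - \tilde u$, where $\varphi$ is the subsolution of Lemma~\ref{lem:BH_subsolution} with $\mathcal M^-(D^2\varphi) \geq c(\beta)$ and $\varphi(te_n) = t^\beta$, absorbs the source and gives $u(x) \geq (s/R)^\beta - \varepsilon - CR^{2-n/q}c_0 \geq (\rho/4)^\beta - \varepsilon - C\delta^{2-n/q}c_0$. Since $\beta < \kappa$, taking $\rho$ small guarantees $(\rho/4)^\beta \geq 2\rho^\kappa$, and then $\varepsilon, c_0$ small conclude the step.

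For the negative lower bound, the interior portion $d > \rho\delta$ follows from the previous step (since $\rho^\kappa > -\rho^\kappa\varepsilon$), leaving only the strip $\{d \leq \rho\delta\}\cap B_\rho$. Introduce the $\mathcal L$-harmonic function $h$ in $\Omega\cap B_1$ with $h = \varepsilon$ on $\partial\Omega\cap B_1$ and $h = 0$ on $\partial B_1\cap\Omega$. Since $\Phi := u + \varepsilon \geq 0$ and $\Phi - h$ vanishes on $\partial\Omega\cap B_1$ and is nonnegative on $\partial B_1\cap\Omega$, ABP yields $\Phi \geq h - Cc_0$, equivalently $u \geq -\varepsilon H - Cc_0$ where $H := 1 - h/\varepsilon \in [0,1]$ is $\mathcal L$-harmonic and vanishes on $\partial\Omega\cap B_1$. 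It therefore suffices to prove the pointwise bound $H(x) \leq \rho^\kappa/2$ for $x \in B_\rho$, obtained by constructing an $\mathcal L$-supersolution dual to the cone subsolution of Lemma~\ref{lem:BH_subsolution} that vanishes on $\partial\Omega$ at rate $d^{\beta'}$ for some $\beta' > \kappa$. Matching the pair of exponents $\beta < \kappa < \beta'$ is what forces $L_*$ to be small enough to accommodate both simultaneously.

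\medskip
\noindent\textbf{Main obstacle.} The principal technical difficulty is the sharp upper bound on the harmonic majorant $H$ in part (ii): standard Hopf-type estimates give only linear decay $H \leq Cd(\cdot,\partial\Omega)$, insufficient when $\kappa > 1$, so one must build an $\mathcal L$-supersolution vanishing at rate $d^{\beta'}$ with $\beta' > \kappa$ from $\partial\Omega$. For non-divergence operators with bounded measurable coefficients, this requires a Pucci computation analogous to the one in Lemma~\ref{lem:BH_subsolution} but for the opposite extremal operator, tuned to yield $\mathcal M^+(D^2\Psi) \leq 0$; for divergence form operators, the variable-coefficient version is then recovered via the same frozen-coefficient approximation as in Lemma~\ref{lem:div_subsolution}, relying on Lemma~\ref{lem:approximation}. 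This construction pins down the quantitative dependence $L_* = L_*(\kappa)$.
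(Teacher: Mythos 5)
Your treatment of the interior lower bound (first inequality) is essentially the paper's argument: scale a cone with vertex on $\partial\Omega$, split off the source term via ABP, and apply the subsolution of Lemma~\ref{lem:BH_subsolution} (or Lemma~\ref{lem:div_subsolution}). The scaling $\|\tilde f\|_{L^n} \lesssim R^{2-n/q}c_0$ is the correct use of $q>n$, and the exponent comparison $(\rho/4)^\beta \geq 2\rho^\kappa$ with $\beta<\kappa$ matches the paper.

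The second part has a genuine gap. You reduce matters to showing $H(x)\leq \rho^\kappa/2$ for $x\in B_\rho$, where $H$ is $\mathcal{L}$-harmonic, $H=0$ on $\partial\Omega\cap B_1$, $H=1$ on $\partial B_1\cap\Omega$, and you propose to achieve this via a supersolution barrier vanishing like $d^{\beta'}$ for some $\beta'>\kappa>1$. Such a barrier cannot exist, and the target inequality is in fact false for small $\rho$. Already in the flat case $L=0$, $\Omega=\{x_n>0\}$, $\mathcal L=\Delta$: the odd reflection of $H$ is harmonic across $\{x_n=0\}$, so $H(te_n)=\partial_n H(0)\,t+O(t^3)$ with $\partial_n H(0)>0$ a dimensional constant; hence $H(\rho e_n/2)\sim c\rho$, which dominates $\rho^\kappa$ for every $\kappa>1$ once $\rho$ is small. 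More generally, for a Lipschitz domain the sharp upper decay exponent for a nonnegative solution vanishing on $\partial\Omega$ is a number $\beta'<1$ (approaching $1$ from \emph{below} as $L\to 0$), never larger than $1$; the exponent $>1$ only appears on the subsolution side (narrow interior cones), not on the supersolution side (wide exterior cones). So the pair you wish to match is $\beta'<1<\beta$, not $\beta<\kappa<\beta'$, and no choice of $L_*$ produces $\beta'>\kappa$.

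The paper's mechanism for the second inequality is of a different nature and does not rely on any power decay in the distance to $\partial\Omega$. Setting $v=u^-/\varepsilon$, it uses that a fixed measure fraction of each ball $B_{3\delta}(x_0)$ with $d(x_0,\partial\Omega)\leq\delta$ lies outside $\Omega$ (where $v=0$), so Corollary~\ref{cor:measure_Harnack_rhs} gives $v\leq 1-\gamma$ on the half-ball. Iterating this $j$ times yields $v\leq(1-\gamma)^j$ on $B_{1-3j\delta}$. One first picks $j$ so that $(1-\gamma)^j<\rho^\kappa$, and only afterwards chooses $\delta$ so small that $1-3j\delta\geq\rho$ and $c_0$ so small that the (rescaled) oscillation lemma can be applied $j$ times. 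In particular $\delta$ must be taken much smaller than your fixed choice $\delta=\rho/8$ (which is incompatible with a large number of De Giorgi steps) and is chosen last, not alongside $\rho$. Replacing your barrier step by this iteration scheme would repair the argument.
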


\begin{proof}
Let $\beta \in (1, \kappa)$. Apply Lemma \ref{lem:BH_subsolution} in the non-divergence case (respectively, Lemma \ref{lem:div_subsolution} in the divergence case) to obtain $\eta > 0$ (resp. $\eta'$).

Let $h > 0$ to be chosen later. For $x_0 = (x_0', x_{0n})$ in $\{x \in B_\rho : d(x, \partial\Omega) > \rho\delta\}$, define the cone
$$\mathcal{C} = (x_0', g(x_0')) + hC_\eta = \{x \in \R^n : \eta|x' - x_0'| < x_n - g(x_0') < h\}.$$

Here, we distinguish the upper and the lateral boundaries, respectively,
\begin{equation*}
\begin{split}
\partial_u \mathcal{C} &= \{x \in \R^n : \eta|x' - x_0'| \leq x_n = g(x_0') + h\}\\
\partial_l \mathcal{C} &= \{x \in \R^n : \eta|x' - x_0'| = x_n < h\}.
\end{split}
\end{equation*}
and the upper half cone
$$\mathcal{C}^+ = \mathcal{C} \cap \{x_n > g(x_0') + h/2\}.$$

Now, take $L_* = \min\{\eta/2, 1/16\}$. Hence, the slope of $\partial\Omega$ will be at most half of the slope of $\mathcal{C}$, so the cone separates from the boundary. By some geometric computations, we find $d(\partial_u \mathcal{C}, \partial\Omega) \geq h/\sqrt{4+\eta^2}$.

Let now $\rho \leq 1/2$. The distance of the furthest points of $\mathcal{C}$ to $(x_0', g(x_0)')$ is $h\sqrt{1+1/\eta^2}$. Hence, taking $h \leq 1/(2\sqrt{1+1/\eta^2})$ suffices to have $\mathcal{C} \subset \Omega \cap B_1$. Furthermore, making $h = 4\delta\sqrt{4+\eta^2}$, we will have $\partial_u \mathcal{C} \subset \{x \in B_1 : d(x, \partial\Omega) > \delta\}$, and also $\mathcal{C}^+ \subset \{x \in B_1 : d(x, \partial\Omega) > \delta\}$. Note that this forces $\delta$ to be small, but we will choose it at the end, so this is not a problem.

Define $\tilde{u}(x) = u(x_0', g(x_0') + hx) + \varepsilon$. Let $\tilde{u} = v + w$, where

\[
\left\{
\begin{array}{rcll}
\mathcal{L}v & = & 0 & \text{in } C_\eta\\
v & = & \tilde{u} & \text{on } \p C_\eta
\end{array}
\right.
\quad and\quad
\left\{
\begin{array}{rcll}
\mathcal{L}w & = & f & \text{in } C_\eta\\
w & = & 0 & \text{on } \p C_\eta.
\end{array}
\right.
\]

By the ABP estimate, Theorem \ref{thm:ABP}, in the non-divergence form case, or by Theorem \ref{thm:ABP_divergence} in the divergence form case, $\|w\|_{L^\infty(C_\eta)} \leq C'\|f\|_{L^n(C_\eta)} \leq C'c_0$. On the other hand, $v \geq 0$ on $\partial C_\eta$, and $v \geq 1$ on $\partial_u C_\eta$ and $C_\eta^+$ (defining the upper boundary and the upper half anologously). Hence, we can apply Lemma \ref{lem:BH_subsolution} or a rescaled Lemma \ref{lem:div_subsolution} to $v$ to conclude that $v(te_n) \geq t^\beta$, possibly only for small $t < t_\sigma$. 

Putting all together, $\tilde{u}(te_n) \geq t^\beta - C'c_0$, which means 
$$u((x_0', g(x_0')) + hte_n) \geq t^\beta - C'c_0 - \varepsilon,$$
only when $t < t_\sigma$ for divergence form operators. Therefore,
$$u(x_0) \geq \left(\frac{x_{0n} - g(x_0')}{h}\right)^\beta - C'c_0 - \varepsilon \geq \left(\frac{\rho\delta}{h}\right)^\beta - (C'c_0 + \varepsilon) = \left(\frac{\rho}{2\sqrt{4+\eta^2}}\right)^\beta - C'c_0-\varepsilon.$$

Finally, since $\beta < \kappa$, we can choose $\rho > 0$ small enough such that $\rho^{\beta - \kappa} \geq 6\sqrt{4+\eta^2}$ (and $t < t_\sigma$ if needed), and then, choosing $\varepsilon, c_0 > 0$ small enough,
$$u(x_0) \geq 3\rho^\kappa - C'c_0 - \varepsilon \geq \rho^\kappa.$$

Now, for the second inequality, let $x_0 \in B_{1-{3\delta}}$, $d(x_0, \partial\Omega) \leq \delta$. Let $v = u^-/\varepsilon$ in the ball $B_{3\delta(x_0)}$, extending $u$ by $0$ below $\partial\Omega$. By elementary properties of \mbox{$L^n$-viscosity} and weak solutions, since $\mathcal{L}u \leq f$, $\mathcal{L}v \geq -f^+/\varepsilon$. Now, $v \geq 0$ in the whole ball, $v \leq 1$ because $u \geq -\varepsilon$, and $v = 0$ below $\partial\Omega$. Let $z \in \partial\Omega$ be the closest point of the boundary to $x_0$. Let $C_z$ be the downwards cone with slope $L_*$ and vertex in $z$. Then, $C_z$ lies entirely below $\partial\Omega$, and $v = 0$ in $C_z \cap B_{3\delta}(x_0)$. Since $d(x_0, z) \leq \delta$, $|C_z \cap B_{3\delta}(x_0)| \geq c(L_*)|B_{3\delta}(x_0)|$, where $c(L_*)$ is a geometric constant that only depending on the dimension and $L_*$.

Applying Theorem \ref{cor:measure_Harnack_rhs}, $v \leq 1 - \gamma$ in $B_{3\delta/2}(x_0)$, and in particular $v(x_0) \leq 1 - \gamma$. In order to do it, we need $f^+/\varepsilon$ to be small enough, to have $\|f^+/\varepsilon\|_{L^n(B_1)} \leq \delta(c_L)$ in the notation of the theorem.

We will iterate this reasoning with the functions $v_j = v / (1 - \gamma)^j$, defined in $B_{3\delta}(x_0)$, with $x_0 \in B_{1 - 3j\delta}$, $d(x_0, \partial\Omega) \leq \delta$. The conclusion of each iteration is $v_j \leq 1 - \gamma$ in $B_{1 - 3(j+1)\delta}$, i.e.\ $v_{j+1} \leq 1$ in $B_{1 - 3(j+1)\delta}$. This implies $v \leq (1 - \gamma)^j$ in $B_{1 - 3j\delta}$, and then $u \geq -(1-\gamma)^j\varepsilon$ in $B_{1-3j\delta}$.

To end the proof we only need to choose $j$ such that $(1 - \gamma)^j < \rho^\kappa$, and then make $\delta$ small until $1 - 3j\delta \geq \rho$. Finally, notice that we need $\|f^+/((1-\gamma)^i\varepsilon)\|_{L^n(B_1)} \leq \delta(c_L)$ for $i = 1,\ldots,j$ to be able to apply successively Theorem \ref{cor:measure_Harnack_rhs}. This is possible choosing $c_0$ accordingly once we know $j$.
\end{proof}

Now, we iterate the lemma to obtain the desired result.

\begin{prop}\label{prop:BH_rhs_positiva}
Let $q > n$, $\kappa > 1$, and let $\mathcal{L}$ be as in (\ref{eq:non-divergence_operator}) or (\ref{eq:divergence_operator}). There exist $L_* = L_*(q,n,\kappa,\lambda,\Lambda) > 0$ and $\varepsilon, \delta, c_0 \in (0,1)$, such that the following holds. 

Let $\Omega$ be a Lipschitz domain as in Definition \ref{defn:lipschitz_domain} with constant $L < L_*$. Let $u$ be a solution of (\ref{eq:condicions_iteracio_rhs}) with $f$ such that $\|f\|_{L^q(B_1)} \leq c_0$. Then,
$$u > 0 \quad \text{in } \Omega \cap B_{2/3}.$$
Moreover, for all $t \in (0,1)$,
$$u(te_n) \geq t^\kappa.$$
The constants $L_*$, $\varepsilon$, $\delta$ and $c_0$ depend only on the dimension, $\kappa$, $q$, $\lambda$, $\Lambda$, as well as $\sigma$, when applicable.
\end{prop}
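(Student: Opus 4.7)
The strategy is to iterate Lemma~\ref{lem:BH_rhs_iteracio} at the dyadic scales $\rho^k$. Set $\Omega_k := \rho^{-k}\Omega$, $f_k(x) := \rho^{k(2-\kappa)} f(\rho^k x)$, and define the rescaled function $u_k(x) := \rho^{-k\kappa} u(\rho^k x)$. A direct computation shows that $\mathcal{L}_k u_k = f_k$, where $\mathcal{L}_k$ has the same ellipticity constants as $\mathcal{L}$, and $\Omega_k$ has the same Lipschitz constant $L$ as $\Omega$. The key scaling identity is
$$
\|f_k\|_{L^q(B_1)} = \rho^{k(2-\kappa-n/q)}\|f\|_{L^q(B_{\rho^k})} \leq \rho^{k(2-\kappa-n/q)}\,c_0,
$$
which is bounded by $c_0$ whenever $\kappa \leq 2 - n/q$. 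Since $q > n$, the range $(1, 2-n/q)$ is nonempty; for larger $\kappa$ we may prove the statement for some $\kappa' \in (1, 2-n/q)$ and use $t^{\kappa'} \geq t^{\kappa}$ on $(0,1]$, so henceforth we assume $\kappa < 2-n/q$.

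The core of the proof is the inductive claim that for every $k \geq 0$,
\begin{equation*}
u \geq \rho^{k\kappa} \text{ in } \Omega \cap \{x \in B_{\rho^k}: d(x,\partial\Omega) > \rho^k \delta\}, \qquad u \geq -\rho^{k\kappa}\varepsilon \text{ in } \Omega \cap B_{\rho^k}.
\end{equation*}
The base case $k = 0$ is precisely hypothesis~(\ref{eq:condicions_iteracio_rhs}). For the inductive step, the inequalities at step $k$ translate exactly into the hypotheses of Lemma~\ref{lem:BH_rhs_iteracio} applied to $u_k$ on $B_1$ with right hand side $f_k$; the conclusion of that lemma, rewritten back in the original variables, is the inequality at step $k+1$.

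From this iteration, the nondegeneracy estimate follows quickly. Given $t \in (0,1)$, choose $k$ with $\rho^{k+1} < t \leq \rho^k$. Then $te_n \in B_{\rho^k}$, and because $g(0) = 0$ and $L \leq L_*$ is small, $d(te_n,\partial\Omega) \geq t/\sqrt{1+L^2} \geq t/2$. Using $\rho > 2\delta$ from Lemma~\ref{lem:BH_rhs_iteracio}, this distance exceeds $\rho^{k+1}/2 > \rho^k \delta$, so $te_n$ lies in the good set at scale $\rho^k$; the inductive claim gives $u(te_n) \geq \rho^{k\kappa} \geq t^\kappa$. For strict positivity in $\Omega \cap B_{2/3}$, the union of the good sets from iteration around $0$ already covers the cone of influence $\{x : d(x,\partial\Omega) > \delta |x|\}$, and for points $x$ outside this cone one reapplies the same iteration scheme centered at the closest boundary point $p \in \partial\Omega$; the hypotheses survive translation since both the interior bound $u \geq 1$ on $\{d > \delta\}$ and the $L^q$ smallness of $f$ are center-independent, and the Lipschitz structure of $\Omega$ around $p$ is the same as around $0$. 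A finite covering argument then yields $u > 0$ at every point of $\Omega \cap B_{2/3}$.

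The only delicate point in this plan is the scaling constraint $\kappa \leq 2 - n/q$, which is exactly what forces the threshold $q > n$ in the hypothesis; once one fixes $\kappa$ in that range, the rest of the proof is routine induction and geometric bookkeeping on the distance function.
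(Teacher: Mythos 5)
Your proposal follows the paper's proof essentially verbatim: the same rescaling $u_k(x)=\rho^{-k\kappa}u(\rho^k x)$, the same $L^q$ scaling computation showing $\|f_k\|_{L^q(B_1)}\leq c_0$ under $\kappa\leq 2-n/q$, the same inductive propagation of hypothesis~(\ref{eq:condicions_iteracio_rhs}), and the same re-centering to establish positivity. Two points deserve sharpening, however.

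First, your claim that the good sets $\{x\in B_{\rho^k}:d(x,\partial\Omega)>\rho^k\delta\}$ cover the cone $\{x:d(x,\partial\Omega)>\delta|x|\}$ is not quite right. For a point with, say, $|x|$ midway between $\rho^{k+1}$ and $\rho^k$, membership in the $k$-th good set requires $d>\rho^k\delta$, which is strictly stronger than $d>\delta|x|$; the union only covers the narrower cone $\{d>\delta|x|/\rho\}$. This is harmless for your strategy since you can simply treat every point with $d(x,\partial\Omega)\leq\delta$ via re-centering, but as written the assertion is false.

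Second, the re-centering is not a pure translation. If $p\in\partial\Omega$ is near $\partial B_{2/3}$, the ball $B_1(p)$ protrudes outside $B_1$, where the hypotheses on $u$ and $f$ are not available, so you cannot say the hypotheses ``survive translation.'' You need to shrink as well: the paper works with $\tilde u(x)=u(x_0+x/3)$ on $B_1$, which forces $B_{1/3}(x_0)\subset B_1$, and then adjusts $\delta$, $\varepsilon$, $c_0$ to account for the rescaling of the distance threshold and the $L^q$ norm. Without explicitly introducing this dilation and the associated adjustment of constants, the re-centering step has a gap. Once these two bookkeeping points are fixed, the argument matches the paper's proof.
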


\begin{proof}
We will iterate the previous Lemma \ref{lem:BH_rhs_iteracio}. Assume without loss of generality that $\kappa < 2 - n/q$. Let $u_0 = u$, $f_0 = f$, and define the scalings:
$$u_{j + 1}(x) = \rho^{-\kappa}u_j(\rho x), \quad f_{j + 1}(x) = \rho^{2 - \kappa}f_j(\rho x).$$

Define $\Omega_j$ to be the rescaled domains of the $u_j$. Observe that the Lipschitz constant of the domains is the same or smaller, and that $\mathcal{L}u_j = f_j$. Now we will see that the right hand side is bounded as we need. Indeed, since $\rho^{2-\kappa} < \rho^{n/q}$,
$$\|f_{j + 1}\|_{L^q(B_1)} < \left(\int_{B_1}\rho^n|f_j(\rho x)|^q\mathrm{d}x\right)^{1/q} =  \left(\int_{B_\rho}|f_j(y)|^q\mathrm{d}y\right)^{1/q} = \|f_j\|_{L^q(B_\rho)},$$
and then $\|f_j\|_{L^q(B_1)} \leq c_0$ for all $j$.

We will prove by induction that $u_j$ satisfies (\ref{eq:condicions_iteracio_rhs}) for all $j$ as well. Start supposing $u_j$ does. Then, by Lemma \ref{lem:BH_rhs_iteracio}, $u_j \geq \rho^\kappa$ in $\Omega_j \cap \{x \in B_\rho : d(x, \partial\Omega_j) > \rho\delta\}$, which is equivalent to $u_{j + 1} \geq 1$ in $\Omega_{j+1} \cap \{x \in B_\rho : d(x, \partial\Omega_{j+1}) > \rho\delta\}$. Also by the lemma, $u_j \geq -\rho^\kappa\varepsilon$ in $B_\rho$, which is the same as $u_{j + 1} \geq -\varepsilon$ in $B_1$.

All iterates $u_j$ satisfy (\ref{eq:condicions_iteracio_rhs}), thus in particular $u_j(te_n) \geq 1$ for $t \in (2\delta,1)$, taking into account that, since $L_* < \sqrt{3}$, $d(2\delta e_n, \partial\Omega) > \delta$. Rescaling back, this translates easily into $u(te_n) \geq t^\kappa$.

Now, observe that, after a change of variables, choosing smaller $\varepsilon$, $\delta$ and $c_0$ if needed, the function $\tilde{u}(x) = u(x_0 + x/3)$ is also a solution of (\ref{eq:condicions_iteracio_rhs}) for any ${x_0 \in \p\Omega \cap B_{2/3}}$. Analogously, we have $\tilde{u}(te_n) \geq t^\kappa$, thus $u(x_0 + te_n/3) > 0$, and since $\delta < 1/3$ this implies $u > 0$ in ${\Omega \cap B_{2/3}}$.
\end{proof}

As a consequence, we find:

\begin{cor}\label{cor:BH_rhs_growth}
Let $q > n$, $\kappa > 1$, and let $\mathcal{L}$ be as in (\ref{eq:non-divergence_operator}) or (\ref{eq:divergence_operator}). There exists $L_* = L_*(q, n, \kappa, \lambda, \Lambda)$ such that the following holds.

Let $\Omega$ be a Lipschitz domain as in Definition \ref{defn:lipschitz_domain} with constant $L < L_*$. Let $f$ such that $\|f\|_{L^q(B_1)} \leq c_0$. Let $v$ be a positive $L^n$-viscosity or weak solution of $\mathcal{L}v = f$, with $v(e_n/2) \geq 1$. Then, for all $x \in B_{1/2}$,
$$v(x) \geq c_1 d(x,\p\Omega)^\kappa,$$
for some sufficiently small $c_0, c_1 > 0$, only depending on the dimension, $\kappa$, $q$, $\lambda$, $\Lambda$, as well as $\sigma$, when applicable.
\end{cor}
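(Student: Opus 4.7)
The strategy is to upgrade the pointwise nondegeneracy of Proposition \ref{prop:BH_rhs_positiva} to a uniform growth estimate by rescaling and translating at boundary points, after first normalizing $v$ on an interior set via the interior Harnack inequality. Concretely, let $\delta$ denote the constant from Proposition \ref{prop:BH_rhs_positiva}, and apply Lemma \ref{lem:Harnack_interior_Lipschitz_rhs} to $v$ with parameter $\delta/4$; using $v(e_n/2) \geq 1$ together with $\|f\|_{L^n(B_1)} \leq |B_1|^{1/n - 1/q}\|f\|_{L^q(B_1)} \leq C c_0$, we obtain
\[
v \geq c_* \qquad \text{on } A := \{x \in \Omega \cap B_{1 - \delta/4} : d(x, \partial\Omega) \geq \delta/4\},
\]
for some $c_* > 0$ depending only on $\delta$ and the ellipticity constants, provided $c_0$ is chosen small enough.

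Fix now $x \in \Omega \cap B_{1/2}$ and set $r := d(x, \partial\Omega)$. If $r \geq 1/16$, then $x \in A$ and the estimate follows directly with $c_1 := c_* \cdot 16^\kappa$. Otherwise, let $x_0 := (x', g(x')) \in \partial\Omega$ be the vertical projection; for $L < L_*$ small we have $|x_0| \leq 2/3$, and the vertical height $h := x_n - g(x')$ satisfies $r \leq h \leq r\sqrt{1 + L^2} \leq 2r$. Consider the rescaled function
\[
\tilde v(y) := \frac{1}{c_*}\, v\!\left( x_0 + \frac{y}{4} \right), \qquad y \in B_1,
\]
on the rescaled domain $\tilde\Omega := 4(\Omega - x_0)$, which is again a Lipschitz domain of the form of Definition \ref{defn:lipschitz_domain} with the same constant $L$. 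Then $\tilde v \geq 0$ vanishes on $\partial\tilde\Omega$ and satisfies $\mathcal{L}\tilde v = \tilde f$ with $\|\tilde f\|_{L^q(B_1)} \leq C(n,q)\, c_0/c_*$, which can be made as small as required by taking $c_0$ small relative to $c_*$. Moreover, every $y \in B_1$ with $d(y, \partial\tilde\Omega) > \delta$ corresponds to a point $x_0 + y/4 \in B_{11/12} \cap \Omega$ with $d(x_0 + y/4, \partial\Omega) > \delta/4$, hence lies in $A$; so $\tilde v \geq 1$ on $\{y \in B_1 : d(y, \partial\tilde\Omega) > \delta\}$.

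Thus $\tilde v$ satisfies the hypotheses of Proposition \ref{prop:BH_rhs_positiva}, yielding $\tilde v(te_n) \geq t^\kappa$ for all $t \in (0,1)$. Since $x = x_0 + h e_n$ with $h \leq 2r < 1/8$, setting $t := 4h \in (0, 1/2)$ gives
\[
v(x) \;=\; c_* \, \tilde v(4h\, e_n) \;\geq\; c_* (4h)^\kappa \;\geq\; 4^\kappa c_* \, r^\kappa,
\]
which, combined with the case $r \geq 1/16$, proves the claim. The main technical point is the choice of rescaling factor (here $4$): it must be large enough that the rescaled interior nondegeneracy set $\{y \in B_1 : d(y, \partial\tilde\Omega) > \delta\}$ embeds into the set $A$ where $v \geq c_*$, which forces coordinating it with the $\delta/4$ defining $A$; yet small enough that $B_{1/4}(x_0) \subset B_{11/12}$ for every boundary point $x_0$ that arises from some $x \in B_{1/2}$. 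The smallness of $c_0$ is then fixed at the end so that the hypotheses of Proposition \ref{prop:BH_rhs_positiva} hold for $\tilde v$ with uniform constants.
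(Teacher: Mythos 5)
Your proof is correct and follows essentially the same strategy as the paper's: normalize $v$ on an interior set via the interior Harnack inequality (Lemma \ref{lem:Harnack_interior_Lipschitz_rhs}), then apply the nondegeneracy result Proposition \ref{prop:BH_rhs_positiva} after translating to a boundary point and rescaling so that the translated/rescaled function satisfies the hypotheses of that proposition; the only cosmetic differences are the choice of rescaling factor ($4$ vs.\ the paper's $2$) and that you project vertically to the boundary, which is a cleaner way to track the scales. One small slip: in the case $r := d(x,\partial\Omega) \geq 1/16$, you set $c_1 := c_*\cdot 16^\kappa$, but this goes the wrong way --- since $r$ can approach $1/2$ (as $x\in B_{1/2}$ and $0\in\partial\Omega$), you need $c_1 r^\kappa \leq c_*$ for all such $r$, i.e.\ $c_1 \leq c_*\, 2^\kappa$, so the correct choice is $c_1 = c_*$ (or $c_*\,2^\kappa$); combining with the other case one takes the minimum, which is still $c_*$. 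This is a trivial arithmetic slip and does not affect the structure of the argument.
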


\begin{obs}\label{obs:BH_rhs_growth}
We can also write $x_n - g(x')$ instead of $d(x,\p\Omega)$, since the two quantities are comparable.
\end{obs}

\begin{proof}
Assume, after dividing by a constant if necessary, that $v(e_n/2) = 1$. Let $v'(x) = v(2x)$ and $\Omega'$ the corresponding scaled domain. By a version of Lemma \ref{lem:Harnack_interior_Lipschitz_rhs}, we have that if $c_0$ is small enough, $v' \geq c_2 > 0$ in $\{x \in \Omega' \cap B_{3/2} : d(x, \partial\Omega') > \delta\}$, for any $\delta > 0$, some small $c_2 > 0$ that depends only on $\delta$, the dimension, $q$ and the ellipticity constants. Now, apply the previous Proposition \ref{prop:BH_rhs_positiva} to $v'/c_2$ in the balls $B_1(x_0)$ for any $x_0 \in B_1$ (we may need to ask that $c_0$ is smaller to do so). Hence, $v'(x_0 + te_n)/c_2 \geq t^\kappa$, and this implies $v(x_0 + (t/2)e_n) \geq c_2 (t/2)^\kappa$. To end the proof, notice that $d(x_0 + (t/2)e_n, \partial\Omega) \in [t/2, t\sqrt{L^2+1}/2]$, so we can absorb the factor needed to change $t$ for $d(x,\partial\Omega)$ in the constant $c_1$.
\end{proof}

\subsection{Proof of the main result}
Now we have all that we need to prove Theorem \ref{thm:BH_rhs}. Observe that Corollary \ref{cor:BH_rhs_symmetric} is a direct consequence. We divide the proof in two parts: in the first one we prove the inequality, and in the second we deduce the $\mathcal{C}^{0,\alpha}$ regularity of $u/v$.

\begin{proof}[Proof of Theorem \ref{thm:BH_rhs}]
We prove the inequality first. Let
$$\kappa = 1 + \frac{1}{2}\left(1 - \frac{n}{q}\right) > 1,$$
and choose $L_0(q,n,\lambda,\Lambda) = L_*(q,n,\kappa,\lambda,\Lambda)$ with the definition of $L_*$ given by Proposition \ref{prop:BH_rhs_positiva}. We will still keep $\kappa$ explicit to simplify some calculations. If we are in the case $u > 0$, apply Lemma \ref{lem:BH_rhs_positiva_implica_lp}. In either case, by Proposition \ref{prop:BH_rhs_fitada}, $u \leq K$ in $B_{3/4}$.

Then, consider $v$ in the set $A = \{x \in \overline{B_{3/4}} : d(x, \partial\Omega) \geq 3\delta/4\}$. $A$ is a subset of $\{x \in \overline{B_{1-3\delta/4}} : d(x, \partial\Omega) \geq 3\delta/4\}$. Hence, by Lemma \ref{lem:Harnack_interior_Lipschitz_rhs}, and from $v(e_n/2) \geq 1$, it follows that $v \geq C^{-1} - \|f\|_{L^n(B_1)} \geq C^{-1} - c_0$ in the whole set $A$. Furthermore, choosing $m = C^{-1}/2$ and $c_0 \leq m$ yields $v \geq m > 0$ in $A$.

Define now
$$w := \frac{1+\varepsilon}{m}v - \frac{\varepsilon}{K}u,$$
with $\varepsilon > 0$ to be determined later. We will show that $w > 0$ in $B_{1/2}$, and therefore, taking $C = K(1+\varepsilon)/(m\varepsilon)$, $Cv - u > 0$.

By construction, $w \geq v/m \geq 1$ in $A$, and $w \geq -\varepsilon$ in $B_{3/4}$. To apply Proposition \ref{prop:BH_rhs_positiva} (rescaled to the ball $B_{3/4}$), we need to estimate $\mathcal{L}w$:
$$\|\mathcal{L}w\|_{L^q(B_{3/4})} \leq \frac{1+\varepsilon}{m}\|\mathcal{L}v\|_{L^q(B_1)} + \frac{\varepsilon}{K}\|\mathcal{L}u\|_{L^q(B_1)} \leq \left(\frac{1+\varepsilon}{m} + \frac{\varepsilon}{K}\right)c_0.$$

Let $\tilde{w}(x) = w(3x/4)$. Then, $\tilde{w} \geq 1$ in $\Omega \cap \{x \in B_1 : d(x, \partial\Omega) \geq \delta\}$ and $\tilde{w} \geq -\varepsilon$ in $\Omega \cap B_1$. Choosing sufficiently small $\varepsilon, c_0 > 0$ to apply Proposition \ref{prop:BH_rhs_positiva}, we get $\tilde{w} > 0$ in $B_{2/3}$, thus $w > 0$ in $B_{1/2}$.

Now, for the boundary $\mathcal{C}^{0,\alpha}$ regularity of the quotient $u/v$, we will first prove the regularity for the boundary points, and then we will extend it to the whole closed domain $\overline{\Omega \cap B_{1/2}}$, where $u/v$ is extended by continuity on $\partial\Omega$. These arguments are the standard ones found in the literature, but we have to be careful with some calculations to take into account the right hand side of the equations. Additionally, let $c_0^*$ be the value for $c_0$ found in the first part of the proof. We will adjust the final value of $c_0$ in terms of this $c_0^*$.

By a covering argument, the inequality $u \leq C'v$ is valid in $\Omega \cap B_{3/4}$ with an appropriate constant $C'$. Since either $u > 0$ or we can interchange $u$ by $-u$ and the hypotheses still hold, we have $u \geq -C'v$ as well. Let $x_0 \in \partial\Omega \cap \overline{B_{1/2}}$.

First, we will show by induction that there exist sequences $\{a_j\}, \{b_j\}$ such that, for every integer $j \geq 2$,
$$a_j v \leq u \leq b_j v \ \, \text{in} \ \, \Omega \cap B_{2^{-j}}(x_0), \ \, (b_{j+1}-a_{j+1}) = (1 - \theta)(b_j - a_j), \ \, \theta \in (0, 1- 2^{1-\kappa}].$$

For $j = 2$ we take $a_j = -C', b_j = C'$, with the constant from the covering argument. Now, to perform the inductive step, we define two new functions:
$$w_1 := \frac{u - a_j v}{b_j - a_j}, \quad w_2 := \frac{b_j v - u}{b_j - a_j}.$$

These functions are positive solutions of $\mathcal{L}w_i = f_i$ in $\Omega \cap B_{2^{-j}}(x_0)$, vanish continuously at $\partial\Omega$, and $w_1 + w_2 = v$. Therefore, for one of them (the biggest in the point), $2w_i(x_0 + e_n / 2^{j + 1}) \geq v(x_0 + e_n / 2^{j + 1})$. To apply the boundary Harnack, we define the following rescaled functions, with $c_1 > 0$ from Corollary \ref{cor:BH_rhs_growth} in order to have $\tilde{v}(e_n/2) \geq 1$. Let
\begin{gather*}
\tilde{v}(x) = c_1^{-1}2^{j\kappa}v(x_0 + 2^{-j}x),\quad \tilde{w}_i(x) = c_1^{-1}2^{j\kappa}w_i(x_0 + 2^{-j}x),\\
\tilde{f}_1(x) = 2^{j(\kappa-2)}\frac{f(x_0 + 2^{-j}x) - a_jg(x_0 + 2^{-j}x)}{c_1(b_j - a_j)},\\
\tilde{f}_2(x) = 2^{j(\kappa-2)}\frac{b_jg(x_0 + 2^{-j}x) - f(x_0 + 2^{-j}x)}{c_1(b_j - a_j)}.
\end{gather*}

Now we must check $\|\tilde{f}_i\|_{L^q(B_1)} \leq c_0^*$. Indeed, choosing $c_0$ appropriately,
\begin{equation*}
\begin{split}
\|\tilde{f}_1\|_{L^q(B_1)} &\leq \frac{\|2^{j(\kappa-2)}f(x_0 + 2^{-j}x)\|_{L^q(B_1)} + a_j\|2^{j(\kappa-2)}g(x_0 + 2^{-j}x)\|_{L^q(B_1)}}{c_1(b_j - a_j)}\\
&\leq c_0\frac{2^{j(n/q + \kappa-2)}(1+|a_j|)}{c_1(b_j - a_j)} \leq \frac{2^{j(n/q + \kappa-2)}c_0}{c_1 (1 - \theta)^{j-2}} \leq \frac{c_0(1 - \theta)^2}{c_1} \leq c_0^*.
\end{split}
\end{equation*}
The same works for $f_2$.

Applying a rescaled version of the boundary Harnack inequality to the functions $2w_i, v$, we get that $w_i \geq \frac{v}{2C'}$ in $\Omega \cap B_{2^{-(j+1)}}(x_0)$. This presents two options: either
$$\frac{u - a_j v}{b_j - a_j} \geq \frac{v}{2C'} \quad \Rightarrow \quad u \geq \left(a_j + \frac{b_j - a_j}{2C'}\right)v =: \tilde{a}_{j+1}v, \quad \tilde{b}_{j+1} = b_j,$$
or
$$\frac{b_j v - u}{b_j - a_j} \geq \frac{v}{2C'} \quad \Rightarrow \quad u \leq \left(b_j - \frac{b_j - a_j}{2C'}\right)v =: \tilde{b}_{j+1}v, \quad \tilde{a}_{j+1} = a_j.$$

Either $\tilde{a}_{j + 1} > a_j$ or $\tilde{b}_{j + 1} < b_j$. We cannot choose them yet as $a_{j + 1}, b_{j + 1}$, because we need to ensure $1 - \theta \geq 2^{1-\kappa}$. This is done by choosing
\begin{align*}
&a_{j + 1} = \min\{\tilde{a}_{j+1}, a_j + (1-2^{1-\kappa})(b_j - a_j)\},\\
&b_{j + 1} = \max\{\tilde{b}_{j+1}, b_j - (1-2^{1-\kappa})(b_j - a_j)\}.
\end{align*}

After this, $a_j \leq u/v \leq b_j$ in $\Omega \cap B_{2^{-j}}(x_0)$, and then
\begin{equation}\label{eq:BH_rhs_ball_estimate}
\sup\limits_{B_{2^{-j}(x_0)\cap\Omega}}u/v - \inf\limits_{B_{2^{-j}(x_0)\cap\Omega}}u/v \leq b_j - a_j = (2C')(1 - \theta)^{j-2}.
\end{equation}

We can extend $u/v$ by continuity at $x_0$ as the limit of the $a_j$ (or the $b_j$), and for any point $p \in \Omega \cap B_{2^{-j}}(x_0)$, $|(u/v)(x_0) - (u/v)(p)| \leq 2C'(1 - \theta)^{j-2}$, hence $u/v$ is $\mathcal{C}^{0,\alpha}$ at $x_0$ with $\alpha = -\log_2 (1 - \theta)$. Then, for every point $x_0$ on the boundary we have 
$$\left|\frac{u}{v}(x_0) - \frac{u}{v}(p)\right| \leq C|x_0 - p|^\alpha,$$
for some uniform constant $C > 0$, for any $p \in B_{1/2} \cap \overline{\Omega}$.

Now, for the interior points, let $x_1, x_2 \in B_{1/2}$, $d = |x_1 - x_2|$ and $\delta_i = d(x_i, \partial\Omega)$. There are three different cases:

\emph{Case 1}. If $d \geq 1/16$, we just use the fact that $-C'v \leq u \leq C'v$ in $\Omega \cap B_{3/4}$, hence, for any $\alpha \in (0, 1)$,
$$\left|\frac{u}{v}(x_1) - \frac{u}{v}(x_2)\right| \leq 2C' \leq 32C'|x_1 - x_2|^\alpha.$$

\emph{Case 2}. If the points are far compared with the distance to the boundary, in the sense that $d \geq \delta_i/4$ for at least one of them, let $y$ be a point in the boundary such that $d(x_i, y) < 8d$ for both of them (for example, in the case $\delta_1 \leq 4d$, let $y$ be the closest point in the boundary to $x_1$, so that $d(x_2, y) \leq \delta_1 + d \leq 5d$). Then,
\begin{align*}
\left|\frac{u}{v}(x_1) - \frac{u}{v}(x_2)\right| &\leq \left|\frac{u}{v}(x_1) - \frac{u}{v}(y)\right| + \left|\frac{u}{v}(y) - \frac{u}{v}(x_2)\right|\\
&\leq C(|x_1 - y|^\alpha + |y - x_2|^\alpha) \leq 2C(8d)^\alpha \leq 2^{1+3\alpha}C|x_1 - x_2|^\alpha.
\end{align*}

\emph{Case 3}. When the points are close, i.e.\ $d < 1/16$ and $d < \min(\delta_1, \delta_2)/4$, suppose without loss of generality $0 < \delta_1 \leq \delta_2$. Let 
$$r = d(x_2, \partial(B_{3/4}\cap\Omega)) = \min\{3/4 - |x_2|, \delta_2\} \geq \min\{1/4,\delta_2\}.$$ Now, we introduce an auxiliary function $w = u - \mu v$, with $\mu$ to be determined later.
\begin{equation*}
\left|\frac{u}{v}(x_1) - \frac{u}{v}(x_2)\right| \leq \frac{v(x_1)|w(x_1) - w(x_2)| + |w(x_2)||v(x_1)-v(x_2)|}{v(x_1)v(x_2)}.
\end{equation*}

Hence, since $\mathcal{L}w = f - \mu g$, $\mathcal{L}v = g$, by interior regularity estimates,
\begin{equation*}
\begin{split}
|w(x_1) - w(x_2)| &\leq C_1|x_1 - x_2|^{\alpha'}\left(r^{-\alpha'}\|w\|_{L^\infty(B_{r/2}(x_2))} + r^{2-n/q-\alpha'}\|\mathcal{L}w\|_{L^q(B_{r/2}(x_2))}\right)\\
&\leq C_1|x_1 - x_2|^{\alpha'}\left(r^{-\alpha'}\|w\|_{L^\infty(B_{r/2}(x_2))} + (1+|\mu|)c_0r^{2-n/q-\alpha'}\right),\\
|v(x_1) - v(x_2)| &\leq C_1|x_1 - x_2|^{\alpha'}\left(r^{-\alpha'}\|v\|_{L^\infty(B_{r/2}(x_2))} + r^{2-n/q-\alpha'}\|g\|_{L^q(B_{r/2}(x_2))}\right)\\
&\leq C_1|x_1 - x_2|^{\alpha'}\left(r^{-\alpha'}\|v\|_{L^\infty(B_{r/2}(x_2))} + c_0r^{2-n/q-\alpha'}\right).
\end{split}
\end{equation*}
We may assume without loss of generality that $\alpha' \in (0, 2 - \kappa)$.

Now, by the interior Harnack inequality and Corollary \ref{cor:BH_rhs_growth}, tweaking the constants, $v(x_i) \leq \|v\|_{L^\infty(B_{r/2}(x_2))} \leq Cv(x_i)$. Then, combining our estimates,
\begin{align*}
\frac{1}{C_1}&\left|\frac{u}{v}(x_1) - \frac{u}{v}(x_2)\right||x_1 - x_2|^{-\alpha'} \leq\\
\leq &\frac{\|w\|_{L^\infty(B_{r/2}(x_2))} + (1+|\mu|)c_0r^{2-n/q}}{v(x_2)r^{\alpha'}} + \frac{|w(x_2)|(\|v\|_{L^\infty(B_{r/2}(x_2))} + c_0r^{2-n/q})}{v(x_1)v(x_2)r^{\alpha'}}\\
\leq &\frac{C\|w\|_{L^\infty(B_{r/2}(x_2))}}{r^{\alpha'}\|v\|_{L^\infty(B_{r/2}(x_2))}} + \frac{C(1 + |\mu|)c_0r^{2-n/q}}{r^{\alpha'}\|v\|_{L^\infty(B_{r/2}(x_2))}} + \frac{C^2\|w\|_{L^\infty(B_{r/2}(x_2))}}{r^{\alpha'}\|v\|_{L^\infty(B_{r/2}(x_2))}}+\\  &+\frac{C^2\|w\|_{L^\infty(B_{r/2}(x_2))}c_0r^{2-n/q}}{r^{\alpha'}\|v\|_{L^\infty(B_{r/2}(x_2))}^2}.
\end{align*}

Now we distinguish two cases: when $r = 1/4$ we just use the global estimates, and when $r < 1/4$ we do some finer computations.

    \emph{Case 3.1}. When $r = 1/4$, let $\mu = 0$. Hence $w = u$. Since $-C'v \leq u \leq C'v$\linebreak in ${B_{3/4}\cap\Omega}$, $\|w\|_{L^\infty(B_{r/2}(x_2))} \leq C'\|v\|_{L^\infty(B_{r/2}(x_2))}$, and $\|v\|_{L^\infty(B_{r/2}(x_2))} \geq c_1 r^\kappa$ by Corollary \ref{cor:BH_rhs_growth}, then the right hand side of the previous inequality is bounded by some constant $C_2$ that only depends on $n, q, \lambda, \Lambda$. Hence,
    $$\left|\frac{u}{v}(x_1) - \frac{u}{v}(x_2)\right| \leq C_1C_2|x_1 - x_2|^{\alpha'}.$$
    
    \emph{Case 3.2}. If $r < 1/4$, $r = \delta_2$. Choose $\mu = u(x_2)/v(x_2)$, so that $w(x_2) = 0$. Let $k_0$ be the maximum positive integer such that $\delta_2 < 2^{-k_0}$ (hence $\delta_2 \geq 2^{-k_0 -1}$). Then, $k_0 \geq 2$, $d < \delta_2/4$, and $x_1, x_2$ belong to $\Omega \cap B_{2^{-k_0 + 1}}(y)$, with $y \in \partial\Omega$, for instance, the closest point in $\partial\Omega$ to $x_2$ ($d(y,x_1) < \delta_2 + d < 2\delta_2 < 2^{-k_0+1}$ by the triangle inequality). For the same reason, $B_r(x_2) \subset \Omega \cap B_{2^{-k_0+1}}(y)$.
    
    By the estimate (\ref{eq:BH_rhs_ball_estimate}), $\|w\|_{L^\infty(B_r(x_2))} \leq (2C')(1 - \theta)^{k_0 - 2}\|v\|_{L^\infty(B_r(x_2))}$, and combining it with the previous result and the fact that $1 - \theta = 1/2^\alpha$,
    \begin{align*}
    \frac{1}{C_1}\left|\frac{u}{v}(x_1) - \frac{u}{v}(x_2)\right||x_1 - x_2|^{-\alpha'} \leq &\frac{2C'(C+C^2)(1 - \theta)^{k_0-2}}{r^{\alpha'}}+\\
    &+ \frac{(C(1+|\mu|) + C^2(2C'(1 - \theta)^{k_0-2}))c_0r^{2-n/q-\alpha'}}{\|v\|_{L^\infty(B_{r/2}(x_2))}}.
    \end{align*}

    We put all the constants (everything that does not depend on $r, k_0$) together, and notice that $|\mu| \leq C'$ and $2^{-k_0-1} \leq r < 2^{-k_0}$. Additionally, we dismiss the term $(1 - \theta)^{k_0-2} \leq 1/(1 - \theta)^2$ as a constant in the second fraction. Simplifying, we get
    $$\left|\frac{u}{v}(x_1) - \frac{u}{v}(x_2)\right| \leq |x_1 - x_2|^{\alpha'}\left(2^{k_0(\alpha' - \alpha)}C_3 + \frac{2^{k_0(n/q+\alpha'-2)}C_4}{\|v\|_{L^\infty(B_{r/2}(x_2))}}\right).$$
    
    Since $\|v\|_{L^\infty(B_{r/2}(x_2))} \geq c_1 r^\kappa = c_1\delta_2^\kappa \geq c_12^{-\kappa(k_0+1)}$,
    $$\frac{2^{k_0(n/q+\alpha'-2)}C_4}{\|v\|_{L^\infty(B_{r/2}(x_2))}} \leq 2^{k_0(n/q+\alpha'+\kappa - 2)+1}C_4/c_1 \leq 2C_4/c_1.$$
    
    If $\alpha \geq \alpha'$, $|x_1-x_2|^{\alpha'}2^{k_0(\alpha'-\alpha)} \leq |x_1-x_2|^{\alpha'}$. If $\alpha < \alpha'$, take into account that $r = |x_1 - x_2| < 2^{-k_0}$, and then
    $$|x_1-x_2|^{\alpha'}2^{k_0(\alpha'-\alpha)} = \left(\frac{|x_1-x_2|}{2^{-k_0}}\right)^{\alpha'}2^{-k_0\alpha} \leq \left(\frac{|x_1-x_2|}{2^{-k_0}}\right)^{\alpha}2^{-k_0\alpha} = |x_1 - x_2|^\alpha.$$
    
    In either case,
    $$|x_1-x_2|^{\alpha'}2^{k_0(\alpha'-\alpha)} \leq |x_1-x_2|^{\min\{\alpha, \alpha'\}}.$$
    
    Hence,
    $$\left|\frac{u}{v}(x_1) - \frac{u}{v}(x_2)\right| \leq C_5|x_1 - x_2|^{\min\{\alpha,\alpha'\}}.$$

Observe that we have proved that $|(u/v)(x_1) - (u/v)(x_2)| \leq C|x_1 - x_2|^\alpha$ for various values of $C, \alpha > 0$. For the expression to be always valid, take the maximum multiplicative constant and the minimum exponent. 
\end{proof}

\section{The boundary Harnack in slit domains}\label{sect:slit}
We also consider our problem in \textit{slit domains}, as introduced in \cite{DS16, DS20}. We define them in the unit ball $B_1$ to keep the notation uncluttered.
\begin{defn}\label{defn:slit_domain}
We say $\Omega$ is a slit domain with Lipschitz constant $L$ if $\Omega = B_1 \setminus K$, with $K$ a closed subset of the graph of a Lipschitz function $g : B_1' \to \R$, with $g(0) = 0$:
$$\Omega = B_1 \setminus K, \quad K \subset \Gamma := \{(x',x_n)\in B_1'\times\R : x_n = g(x')\}, \quad \|g\|_{\mathcal{C}^{0,1}} = L.$$
Additionally, we define the upper and lower \textit{halves} of $\Omega$,
$$\Omega^+ = \Omega \cap \{(x',x_n)\in B_1 : x_n \geq g(x')\}, \quad \Omega^- = \Omega \cap \{(x',x_n)\in B_1 : x_n \leq g(x')\}.$$
We will write $\Omega^\pm$ to refer to $\Omega^+$ or $\Omega^-$ indistinctly.
\end{defn}

An analogous reasoning to the proof of Theorem \ref{thm:BH_rhs} for slit domains yields the following result.

\begin{thm}\label{thm:BH_rhs_slit}
Let $q > n$ and let $\mathcal{L}$ be as in (\ref{eq:non-divergence_operator}) or (\ref{eq:divergence_operator}). There exist small constants $c_0 > 0$ and $L_0 > 0$ such that the following holds.

Let $\Omega = B_1 \setminus K$ be a slit domain as in Definition \ref{defn:slit_domain}, with Lipschitz constant $L < L_0$. Let $u$ and $v > 0$ be $L^n$-viscosity or weak solutions of
\begin{equation*}
\left\{
\begin{array}{rcll}
\mathcal{L}u & = & f & \text{in } B_1 \setminus K\\
u & = & 0 & \text{on } K
\end{array}
\right.
\ \text{and}\quad
\left\{
\begin{array}{rcll}
\mathcal{L}v & = & g & \text{in } B_1 \setminus K\\
v & = & 0 & \text{on } K,
\end{array}
\right.
\end{equation*}
with $f$ and $g$ satisfying (\ref{eq:f_g_fitades}).

Additionally, assume that $v(e_n/2) \geq 1$, $v(-e_n/2) \geq 1$, and either $u > 0$ in $B_1 \setminus K$ and $\max\{u(e_n/2),u(-e_n/2)\} \leq 1$, or $\|u\|_{L^p(B_1)} \leq 1$ for some $p > 0$. Then,
$$u \leq Cv \quad \text{in} \quad B_{1/2} \setminus K,$$
and
$$\left\|\frac{u}{v}\right\|_{\mathcal{C}^{0,\alpha}(\overline{\Omega^\pm}\cap B_{1/2})} \leq C.$$

The positive constants $C$, $c_0$, $L_0$ and $\alpha$ depend only on the dimension, $q$, $\lambda$, $\Lambda$, as well as $p$ and $\sigma$, when applicable.
\end{thm}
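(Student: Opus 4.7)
The plan is to mirror the proof of Theorem \ref{thm:BH_rhs} with the key observation that each half $\Omega^\pm$ behaves like a Lipschitz domain whose boundary contains the graph $\Gamma$, and that the two-sided hypothesis $v(\pm e_n/2)\geq 1$ supplies the nondegeneracy needed on both sides of the slit. Concretely, for a point $x_0\in\Omega^+$ close to $K$, I would place a cone with vertex at the nearest point of $K$ opening upward in the $+e_n$ direction with aperture $\eta$; choosing $L_0\leq\eta/2$ guarantees that this cone stays inside $\Omega^+$, since $\Gamma$ has slope $<L_0$, and the barrier construction of Lemma \ref{lem:BH_subsolution} (resp.\ Lemma \ref{lem:div_subsolution}) then yields the growth $v(x)\geq c_1\, d(x,K)^\kappa$ from above. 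A symmetric argument using downward-opening cones gives the analog from below, producing the slit version of Corollary \ref{cor:BH_rhs_growth}.

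With this nondegeneracy at hand, I would reproduce the three-step structure of Section \ref{sect:main}. The $L^p$ bound of Proposition \ref{prop:BH_rhs_fitada} still gives $u\leq M$ on $B_{3/4}$, because $K\subset\Gamma$ has zero $n$-dimensional Lebesgue measure, so that $u^+$ extended by $0$ on $K$ remains a subsolution in all of $B_1$. The comparison barrier $w=(1+\varepsilon)v/m-\varepsilon u/M$ is defined exactly as in the Lipschitz proof, and the positivity iteration of Lemma \ref{lem:BH_rhs_iteracio} and Proposition \ref{prop:BH_rhs_positiva} is carried out in the slit setting with $d(\cdot,\partial\Omega)$ replaced throughout by $d(\cdot,K)$, the nondegeneracy on each half being supplied by the two-sided hypothesis. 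This yields the inequality $u\leq Cv$ on $B_{1/2}\setminus K$. For the H\"older regularity of $u/v$ on $\overline{\Omega^\pm}\cap B_{1/2}$, I would run the doubling iteration of the last part of the proof of Theorem \ref{thm:BH_rhs} at every $x_0\in K\cap\overline{B_{1/2}}$, restricted to $B_{2^{-j}}(x_0)\cap\Omega^+$ (resp.\ $\cap\Omega^-$); the slit boundary Harnack established in the first part provides the fixed-ratio improvement at each dyadic scale. The extension to interior points of $\Omega^\pm$ reuses the three-case analysis of Cases 1--3 verbatim, since Case 3 only requires $B_{r/2}(x_2)\subset\Omega^\pm$, which is automatic when $r\lesssim d(x_2,K)$.

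The main obstacle I expect is the interior-Harnack chaining that propagates the normalization of $v$ from $e_n/2$ to a generic interior point of $\Omega^+\cap B_{1/2}$. Lemma \ref{lem:Harnack_interior_Lipschitz_rhs} was proved by combining interior Harnack along a segment with the star-shapedness of the $\delta$-interior set of $\Omega$ with respect to $e_n/2$ established in Lemma \ref{lem:A_is_convex}. In the slit setting the analogous set $\{x\in\Omega^+\cap B_{3/4}:d(x,K)\geq\delta\}$ is still a Lipschitz epigraph within $\Omega^+$ by the same geometric argument, so one can again connect $e_n/2$ to any such point by a uniformly bounded chain of interior Harnack balls contained in $\Omega^+$, and a symmetric statement holds in $\Omega^-$ using $-e_n/2$. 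Once this chaining is available on each side, the rest of the proof becomes essentially a line-by-line translation of Section \ref{sect:main}.
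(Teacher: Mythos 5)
Your outline follows the paper's general strategy: treat the slit two-sidedly, propagate the normalization of $v$ from both $\pm e_n/2$, and run the Lipschitz-domain iteration on each half. Several of your observations (the $L^p$-bound via extension by $0$ on the measure-zero slit, the chaining on each half, the barrier cone staying inside $\Omega^\pm$ for $L_0 \leq \eta/2$) match what the paper does. However, there are two concrete gaps in the iteration lemma, and they are precisely the two places the paper singles out as requiring modification.

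First, you propose to replace $d(\cdot,\partial\Omega)$ by $d(\cdot,K)$ throughout Lemma \ref{lem:BH_rhs_iteracio}. The paper's slit version (Lemma \ref{lem:iteration_slit_domains}) instead uses $d(\cdot,\Gamma)$, where $\Gamma$ is the full Lipschitz graph containing $K$, and this is not cosmetic. The barrier construction of Lemma \ref{lem:BH_subsolution}, applied in a cone with vertex on $\Gamma$, gives $u(x_0)\gtrsim\bigl((x_{0n}-g(x_0'))/h\bigr)^\beta - (\text{errors})$, i.e.\ it controls $u$ only in terms of \emph{height above or below the graph}. A point $x_0\in\Gamma\setminus K$ can have $d(x_0,K)>\rho\delta$ while $d(x_0,\Gamma)=0$; under your formulation such $x_0$ lies in the ``good set'' where the conclusion $u\geq\rho^\kappa$ must hold, yet the cone argument yields nothing there. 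Using $d(\cdot,\Gamma)$ avoids this: the iteration then proves $w>0$ on $B_{1/2}\setminus\Gamma$, and the inequality $u\leq Cv$ extends to $\Gamma\setminus K$ by continuity.

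Second, your proposal is silent on how to prove the lower bound $u\geq -\rho^\kappa\varepsilon$ in $B_\rho$. In the Lipschitz case this step used a downward cone with vertex on $\partial\Omega$ lying \emph{outside} $\Omega$, where $u^-$ (extended by $0$) vanishes, so that Corollary \ref{cor:measure_Harnack_rhs} applies. In a slit domain $\Omega=B_1\setminus K$ there is no exterior region of positive measure. The paper's fix (in Lemma \ref{lem:iteration_slit_domains}) is to take $z=x_0-\tfrac{5\delta}{2}e_n$, observe that $d(z,\Gamma)>\delta$ so that $z$ and its downward cone lie in the region where $u\geq 1>0$ by the inductive hypothesis (this is where the two-sided normalization genuinely enters), hence $u^-=0$ on a set of fixed measure fraction in $B_{3\delta}(x_0)$, and the measure-Harnack lemma applies as before. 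This replacement of ``positive measure outside the domain'' by ``positive measure where the inductive hypothesis forces $u>0$'' is the key new idea of the slit proof and is missing from your write-up.
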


When both functions are positive, we recover the symmetric version of the boundary Harnack.
\begin{cor}\label{cor:BH_rhs_slit_symmetric}
Let $q > n$ and $\mathcal{L}$ as in (\ref{eq:non-divergence_operator}) or (\ref{eq:divergence_operator}). There exist small constants $c_0 > 0$ and $L_0 = L_0(q,n,\lambda,\Lambda) > 0$ such that the following holds.

Let $\Omega = B_1 \setminus K$ be a slit domain as in Definition \ref{defn:slit_domain}, with Lipschitz constant $L < L_0$. Let $u, v$ be positive $L^n$-viscosity or weak solutions of
\begin{equation*}
\left\{
\begin{array}{rcll}
\mathcal{L}u & = & f & \text{in } B_1 \setminus K\\
u & = & 0 & \text{on } K
\end{array}
\right.
\ \text{and}\quad
\left\{
\begin{array}{rcll}
\mathcal{L}v & = & g & \text{in } B_1 \setminus K\\
v & = & 0 & \text{on } K,
\end{array}
\right.
\end{equation*}
with $f$ and $g$ satisfying (\ref{eq:f_g_fitades}).

Assume $\min\{v(e_n/2),v(-e_n/2)\} \geq 1$ and $\min\{u(e_n/2),u(-e_n/2)\} \geq 1$. Then,
$$C^{-1}\frac{\min\{u(e_n/2),u(-e_n/2)\}}{\max\{v(e_n/2),v(-e_n/2)\}} \leq \frac{u}{v} \leq C\frac{\max\{u(e_n/2),u(-e_n/2)\}}{\min\{v(e_n/2),v(-e_n/2)\}} \quad \text{in} \quad B_{1/2} \setminus K,$$
and
$$\left\|\frac{u}{v}\right\|_{\mathcal{C}^{0,\alpha}(\overline{\Omega^\pm}\cap B_{1/2})} \leq C.$$

The positive constants $C$, $c_0$, $L_0$ and $\alpha$ depend only on the dimension, $q$, $\lambda$, $\Lambda$, as well as $\sigma$, when applicable.
\end{cor}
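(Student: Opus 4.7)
The plan is to deduce Corollary \ref{cor:BH_rhs_slit_symmetric} directly from Theorem \ref{thm:BH_rhs_slit} by two applications with suitable normalizations of $u$ and $v$. Write $M_u^- = \min\{u(e_n/2), u(-e_n/2)\}$, $M_u^+ = \max\{u(e_n/2), u(-e_n/2)\}$, and analogously $M_v^\pm$. By hypothesis $M_u^-, M_v^- \geq 1$, hence $M_u^+, M_v^+ \geq 1$ as well, a fact that will be used to preserve the smallness of the right hand sides.

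For the upper bound, consider $\hat{u} = u/M_u^+$ and $\hat{v} = v/M_v^-$. These solve $\mathcal{L}\hat{u} = f/M_u^+$ and $\mathcal{L}\hat{v} = g/M_v^-$ in the same sense as before. Since $M_u^+, M_v^- \geq 1$, the rescaled right hand sides satisfy (\ref{eq:f_g_fitades}). Moreover $\hat{v} > 0$ with $\hat{v}(\pm e_n/2) \geq 1$, and $\hat{u} > 0$ with $\max\{\hat{u}(e_n/2), \hat{u}(-e_n/2)\} \leq 1$. Thus Theorem \ref{thm:BH_rhs_slit} applies and gives
\[
\hat{u} \leq C \hat{v} \quad \text{in } B_{1/2}\setminus K, \qquad \left\|\hat{u}/\hat{v}\right\|_{\mathcal{C}^{0,\alpha}(\overline{\Omega^\pm}\cap B_{1/2})} \leq C.
\]
Unravelling the normalization yields $u/v \leq C\, M_u^+/M_v^-$ in $B_{1/2}\setminus K$.

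For the lower bound, reverse the roles: set $\tilde{u} = u/M_u^-$ and $\tilde{v} = v/M_v^+$. Now $\tilde{u} > 0$ with $\tilde{u}(\pm e_n/2) \geq 1$, while $\tilde{v} > 0$ with $\max\{\tilde{v}(e_n/2),\tilde{v}(-e_n/2)\} \leq 1$. The rescaled right hand sides are again controlled by $c_0$ since $M_u^-, M_v^+ \geq 1$. Applying Theorem \ref{thm:BH_rhs_slit} with $\tilde{u}$ in the role previously played by $v$ and $\tilde{v}$ in the role previously played by $u$, one obtains $\tilde{v} \leq C\tilde{u}$, which is equivalent to $u/v \geq C^{-1}\, M_u^-/M_v^+$.

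The $\mathcal{C}^{0,\alpha}$ estimate transfers from either normalization: using the first one, $u/v = (M_u^+/M_v^-)\,\hat{u}/\hat{v}$, so the Hölder seminorm of $u/v$ is at most $C\, M_u^+/M_v^-$, which is the same quantity that controls its $L^\infty$ norm and is therefore absorbed into the constant $C$ appearing in the statement. No substantive obstacle is expected in this argument; the entire analytic content sits in Theorem \ref{thm:BH_rhs_slit}, and the only point requiring attention is verifying that dividing $f$ and $g$ by constants bounded below by $1$ does not violate the smallness assumption (\ref{eq:f_g_fitades}), which is immediate.
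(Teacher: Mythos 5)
Your deduction is correct and follows the intended route: apply Theorem \ref{thm:BH_rhs_slit} twice, once with $\hat u = u/\max\{u(\pm e_n/2)\}$, $\hat v = v/\min\{v(\pm e_n/2)\}$ for the upper bound, and once with the roles and normalizations swapped ($\tilde u = u/\min\{u(\pm e_n/2)\}$ now plays the role of $v$, and $\tilde v = v/\max\{v(\pm e_n/2)\}$ plays the role of $u$) for the lower bound. The observation that dividing $f$ and $g$ by constants $\geq 1$ preserves \eqref{eq:f_g_fitades} is exactly what makes both applications legitimate, and you verify the normalizations needed for the theorem correctly in each case. The paper states this corollary without proof, so your argument supplies the standard missing details.

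One remark: your final step, where you write that the factor $\max\{u(\pm e_n/2)\}/\min\{v(\pm e_n/2)\}$ is "absorbed into the constant $C$," deserves a sharper statement. The hypotheses only provide lower bounds on the values at $\pm e_n/2$, so this ratio is not universally controlled (when $K$ essentially covers $\Gamma$ the two halves of the slit domain decouple, and $\max\{u(\pm e_n/2)\}$ can be made arbitrarily large while $\min\{v(\pm e_n/2)\} = 1$). The honest output of your argument is therefore $\left\|u/v\right\|_{\mathcal{C}^{0,\alpha}(\overline{\Omega^\pm}\cap B_{1/2})} \leq C\,\max\{u(\pm e_n/2)\}/\min\{v(\pm e_n/2)\}$, which is consistent with the first displayed estimate of the corollary but strictly weaker than the universal bound as literally written. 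This appears to be a minor imprecision in the corollary's statement rather than a flaw in your argument (compare Corollary \ref{cor:BH_rhs_symmetric}, where the exact normalization $u(e_n/2) = v(e_n/2) = 1$ makes the ratio equal to one); it would be cleaner to state the extra factor explicitly rather than to claim it is absorbed.
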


Most of the proofs are identical to the one-sided theorem, because we can prove the results for each \textit{side} of $\Gamma$ and then put them together. There are two exceptions: Proposition \ref{prop:BH_rhs_fitada} and Lemma \ref{lem:BH_rhs_iteracio}. The proof of the proposition is even easier, taking $v = u^+$ and extending it by $0$ on $K$, we are ready to apply Theorem \ref{thm:mig_Harnack_sup_norma} and see that $v$ is universally bounded.

As for the lemma, we write here an adapted version and the step of the proof that needs to be changed.
\begin{lem}\label{lem:iteration_slit_domains}
Let $q > n$, $\kappa > 1$, and let $\mathcal{L}$ be as in (\ref{eq:non-divergence_operator}) or (\ref{eq:divergence_operator}). There exists $L_* = L_*(q,n,\kappa,\lambda,\Lambda)$ such that the following holds.

Let $\Omega = B_1 \setminus K$, with $K \subset \Gamma$, be a slit domain as in Definition \ref{defn:slit_domain} with constant $L < L_*$. Let $f$ such that $\|f\|_{L^q(B_1)} \leq c$. Let $u$ be a $L^n$-viscosity or weak solution of
\begin{equation}\label{eq:condicions_iteracio_rhs_slit}
\left\{
\begin{array}{rcll}
\mathcal{L}u & = & f & \text{in } B_1 \setminus K\\
u & = & 0 & \text{on } K,
\end{array}
\right.
\text{with }
\left\{
\begin{array}{rcll}
u & \geq & 1 & \text{in } \Omega \cap \{x \in B_1 : d(x, \Gamma) > \delta\}\\
u & \geq & -\varepsilon & \text{in } \Omega \cap B_1.
\end{array}
\right.
\end{equation}
Then,
\begin{equation*}
\left\{
\begin{array}{rcll}
u & \geq & \rho^\kappa & \text{in } \Omega \cap \{x \in B_\rho : d(x, \Gamma) > \rho\delta\}\\
u & \geq & -\rho^\kappa\varepsilon & \text{in } B_\rho
\end{array}
\right.
\end{equation*}
for some sufficiently small $\rho, \varepsilon, \delta, c \in (0, 1)$, with $\rho > 2\delta$, only depending on the dimension, $\kappa$, $q$, $\lambda$, $\Lambda$, as well as $\sigma$, when applicable.
\end{lem}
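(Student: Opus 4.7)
The proof plan is to mirror the argument of Lemma \ref{lem:BH_rhs_iteracio} almost verbatim, identifying the single step that genuinely requires modification for the slit geometry. For the positive lower bound, fix $x_0 \in B_\rho$ with $d(x_0,\Gamma) > \rho\delta$; assume without loss of generality $x_0 \in \Omega^+$. Build exactly the same upward cone $\mathcal{C} = (x_0', g(x_0')) + hC_\eta$ with slope $\eta$ and height $h = 4\delta\sqrt{4+\eta^2}$. Taking $L_* \leq \eta/2$ ensures $\mathcal{C} \subset \Omega^+ \subset \Omega$, so $\mathcal{C}$ avoids $K$, the lateral/vertex boundary sits where $u \geq -\varepsilon$, and the top face lies in $\{d(x,\Gamma) > \delta\}$ where $u \geq 1$. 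Splitting $u + \varepsilon = v + w$ with $\mathcal{L}v = 0$ and $\mathcal{L}w = f$ on $\mathcal{C}$, the ABP estimate (Theorem \ref{thm:ABP} or \ref{thm:ABP_divergence}) bounds $|w| \leq Cc_0$, and the subsolution barrier from Lemma \ref{lem:BH_subsolution} (or Lemma \ref{lem:div_subsolution}) applied to $v$ gives $v \geq t^\beta$ on the axis. Evaluating at $x_0$ yields $u(x_0) \geq c\rho^\beta - \varepsilon - Cc_0 \geq \rho^\kappa$ after the standard choice of constants with $\beta \in (1,\kappa)$.

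The only step that must genuinely be changed is the measure-theoretic input of the negative-side iteration. The scheme is the same as before: for $x_0$ with $d(x_0,\Gamma) \leq \delta$ work with $v = u^-/\varepsilon$ on $B_{3\delta}(x_0)$ and apply Corollary \ref{cor:measure_Harnack_rhs} iteratively on $v_j = v/(1-\gamma)^j$. To invoke the corollary we must verify $|\{v \leq 0\} \cap B_{3\delta}(x_0)| \geq \eta |B_{3\delta}(x_0)|$. In the Lipschitz case this came from the downward cone at the nearest boundary point lying in $\Omega^c$; here $\Omega^c \cap B_1 = K \subset \Gamma$ has zero Lebesgue measure, so that device breaks. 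The substitute is to use the hypothesis $u \geq 1$ on $\{d(x,\Gamma) > \delta\}$ directly: the strip $\{d(x,\Gamma) \leq \delta\} \cap B_{3\delta}(x_0)$ is contained in a Lipschitz-graph thickening of total measure at most $c_n(1+L)\delta^n$, a controlled fraction of $|B_{3\delta}(x_0)| \sim \delta^n$. Hence
\[
\{v \leq 0\} \cap B_{3\delta}(x_0) \supset \{u \geq 1\} \cap B_{3\delta}(x_0) \supset \{d(x,\Gamma) > \delta\} \cap B_{3\delta}(x_0),
\]
the last set having measure at least $\eta |B_{3\delta}(x_0)|$ for a universal $\eta$ depending only on the dimension and $L$.

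With this geometric input in place, the rest of the iteration proceeds identically: successive applications of Corollary \ref{cor:measure_Harnack_rhs} yield $u \geq -(1-\gamma)^j\varepsilon$ in $B_{1-3j\delta}$; we then pick $j$ so $(1-\gamma)^j \leq \rho^\kappa$, then $\delta$ small so $1 - 3j\delta \geq \rho$, and finally $c_0$ small enough to satisfy $\|f^+/((1-\gamma)^i\varepsilon)\|_{L^n(B_1)} \leq \delta(\eta)$ for $i=1,\dots,j$. The main (and essentially only) subtlety is the replacement of the Lipschitz-domain cone-in-the-complement with the strip-around-$\Gamma$ estimate; once that is established, no further changes are required relative to the proof of Lemma \ref{lem:BH_rhs_iteracio}.
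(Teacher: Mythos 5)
Your proposal is correct and follows essentially the same strategy as the paper's proof. For the first inequality both arguments reuse the one-sided cone construction verbatim, simply placing the cone in $\Omega^+$ (or $\Omega^-$) and noting it avoids $K\subset\Gamma$. For the second inequality, both arguments substitute for the ``cone in $\Omega^c$'' (which now has measure zero) a definite-measure subset of $B_{3\delta}(x_0)$ lying in $\{d(\cdot,\Gamma)>\delta\}$, where $u\ge 1$ forces $u^-=0$. The paper realizes this set as a downward cone $C_z$ from the specific point $z=x_0-\tfrac{5}{2}\delta e_n$ (with $d(z,\Gamma)>\delta$ by the Lipschitz bound), while you use the complement of the $\delta$-neighbourhood of $\Gamma$ directly; these are equivalent, and yours is a slight simplification. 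One small remark: the phrase ``strip has measure at most $c_n(1+L)\delta^n$, hence a controlled fraction'' does not by itself yield $|\{d(\cdot,\Gamma)>\delta\}\cap B_{3\delta}(x_0)|\ge\eta|B_{3\delta}(x_0)|$ (the crude tube estimate degrades with $n$); the clean way to see it, consistent with what the paper does, is to observe that since $d(x_0,\Gamma)\le\delta$ the ball reaches at least $3\delta-2\delta\sqrt{L^2+1}>\tfrac{4}{5}\delta$ past one side of the strip, producing a spherical cap of definite fractional measure depending only on $n$ and $L$. With that cosmetic fix, the argument is complete and matches the paper's.
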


\begin{proof}
The proof of the first inequality is completely analogous to the proof of Lemma \ref{lem:BH_rhs_iteracio}.

For the second inequality, we do the same reasoning as in the one-sided case, but now, instead of picking a downwards cone $C_z$ with vertex at $\partial\Omega$, for each $x_0$ such that $d(x_0, \Gamma) \leq \delta$, we take $z = x_0 - 5\delta/2 e_n$. Since $\Gamma$ is a Lipschitz graph with Lipschitz constant $L < 1/16$, $d(z, \Gamma) \geq 5\delta/(2\sqrt{L^2+1}) - \delta > \delta$, so again $z$ and the analogous downwards cone $C_z$ lie in the region where $u \geq \rho^\kappa$. Moreover, $|C_z \cap B_{3\delta}(x_0)| = c_L|B_{3\delta}|$. The rest of the proof continues analogously.
\end{proof}

\section{Applications to free boundary problems}\label{sect:applications}
\subsection{$\mathcal{C}^{1,\alpha}$ regularity of the free boundary in the obstacle problem}
Consider the classical obstacle problem (\ref{eq:classical_obstacle}) in $B_1$, with $f \geq \tau_0 > 0$, $f \in W^{1,q}$, and assume that $0$ is a free boundary point. We will show that we can extend the proof of the $\mathcal{C}^{1,\alpha}$ regularity of the free boundary due to Caffarelli \cite{Caf98} to the case $f \in W^{1,q}$ thanks to our new result. We generalize the steps of the proof in \cite[Section~5.4]{FR20}.

Our starting point will be the existence of a \textit{regular} blow-up. We will also take for granted the following nondegeneracy condition: if $x_0 \in \overline{\{u > 0\}}$,
$$\sup\limits_{B_r(x_0)} u \geq cr^2,$$
which follows easily from the fact $f \geq \tau_0 > 0$; see \cite[Proposition 5.9]{FR20}.

\begin{prop}\label{prop:classic_FB_Lipschitz}
Let $u$ be a solution of (\ref{eq:classical_obstacle}), with $f \in W^{1,n}$ and $f \geq \tau_0 > 0$. Assume that $0$ is a regular free boundary point as in Definition \ref{defn:regular_point_classical}.

Then, for every $L_0 > 0$ there exists $r > 0$ such that the free boundary is the graph of a Lipschitz function in $B_r$ with Lipschitz constant $L < L_0$.
\end{prop}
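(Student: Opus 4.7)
The plan is to follow Caffarelli's classical strategy (cf.\ \cite{Caf98} and \cite[Section~5.4]{FR20}), replacing the boundary Harnack estimates for harmonic functions by the right-hand-side positivity result developed in Section~\ref{sect:main}. Fix $L_0>0$, pick $\theta\in(0,\pi/2)$ with $\cot\theta<L_0/2$, and set $\delta_0:=\cos\theta>0$. For each $r>0$, introduce the rescaling $u_r(x):=u(rx)/r^2$, which solves $\Delta u_r = f_r\chi_{\{u_r>0\}}$ with $f_r(x):=f(rx)$. A change of variables gives $\|\partial_\tau f_r\|_{L^n(B_1)}=\|\partial_\tau f\|_{L^n(B_r)}\to 0$ as $r\to0$ for every $\tau$, using $f\in W^{1,n}$. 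Along the sequence $r_k\downarrow 0$ from Definition~\ref{defn:regular_point_classical}, $u_{r_k}\to u_0:=\tfrac{\gamma}{2}(x\cdot e)_+^2$ in $\mathcal{C}^1_{\mathrm{loc}}(\mathbb{R}^n)$.

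The proof reduces to the following directional monotonicity claim: for every $\tau\in\mathbb{S}^{n-1}$ with $\tau\cdot e\geq \delta_0$ and all sufficiently large $k$, one has $\partial_\tau u_{r_k}\geq 0$ in $\overline{\{u_{r_k}>0\}}\cap B_{1/2}$. Granted this, $u_{r_k}$ is nondecreasing along every direction of the open cone of aperture $\arccos\delta_0$ around $e$; the classical interior/exterior cone argument at each free-boundary point then forces the free boundary of $u_{r_k}$ in $B_{1/2}$ to be a Lipschitz graph in the direction $e$ with constant at most $\cot\theta<L_0$, and rescaling back by $r_k$ yields the conclusion.

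To prove the monotonicity claim, set $w_k:=\partial_\tau u_{r_k}$. From $\partial_\tau u_0(x)=\gamma(\tau\cdot e)(x\cdot e)_+\geq \gamma\delta_0/2$ on $\{x\cdot e\geq 1/2\}$ and the $\mathcal{C}^1$ convergence, for $k$ large we have $w_k\geq \gamma\delta_0/4$ on $\{x\cdot e\geq 1/2\}\cap B_1$ and $w_k\geq -\eta_k$ in $B_1$, with $\eta_k\to 0$. Moreover $w_k$ solves $\Delta w_k=\partial_\tau f_{r_k}$ in $\Omega_k:=\{u_{r_k}>0\}\cap B_1$ (whose $L^n$ norm vanishes), and $w_k$ vanishes continuously on $\partial\Omega_k\cap B_1$ since $u_{r_k}\in\mathcal{C}^1$ attains its minimum $0$ on the contact set. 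The $\mathcal{C}^1$ convergence, combined with nondegeneracy and the strict positivity of $\partial_e u_0$ on $\{x\cdot e>0\}$, yields that $\Omega_k$ is a Lipschitz epigraph in the $e$-direction whose Lipschitz constant tends to $0$, eventually below the threshold $L_*$ of Proposition~\ref{prop:BH_rhs_positiva}. The claim then follows by applying Proposition~\ref{prop:BH_rhs_positiva} to $(\gamma\delta_0/4)^{-1}w_k$ in $\Omega_k$.

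The hard part will be justifying the Lipschitz regularity of $\Omega_k$ with arbitrarily small Lipschitz constant before the directional monotonicity is itself in place. This requires upgrading the uniform closeness of $\partial\Omega_k$ to the hyperplane $\{x\cdot e=0\}$ (which is immediate from nondegeneracy and the $\mathcal{C}^1$ convergence) into a quantitative Lipschitz bound on the graph, using that $\nabla u_{r_k}$ vanishes on $\partial\Omega_k$ and converges uniformly to $\gamma(x\cdot e)_+e$. Once this geometric fact is secured, the remaining ingredients are routine applications of the interior estimates and positivity-propagation tools of Sections~\ref{sect:preliminaries} and \ref{sect:main}.
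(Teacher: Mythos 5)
The heart of your plan — apply Proposition~\ref{prop:BH_rhs_positiva} to $w_k=\partial_\tau u_{r_k}$ in $\Omega_k=\{u_{r_k}>0\}$ — is circular, because Proposition~\ref{prop:BH_rhs_positiva} is formulated only for Lipschitz domains with Lipschitz constant below the threshold $L_*$, and the Lipschitz regularity of $\Omega_k$ is precisely what the proposition you are trying to prove is supposed to deliver. You flag this yourself in the last paragraph, but the suggested repair does not close the gap. Knowing that $u_{r_k}\in\mathcal{C}^{1,1}$, that $\nabla u_{r_k}=0$ on the free boundary, and that $\nabla u_{r_k}\to\gamma(x\cdot e)_+e$ uniformly, tells you only that the free boundary is trapped in a thin slab $\{|x\cdot e|\lesssim\delta(k)\}$; it gives no control whatsoever on the oscillation of the set $\{u_{r_k}=0\}$ \emph{within} that slab, so you cannot extract a Lipschitz graph bound, let alone one tending to zero, before the directional monotonicity is established. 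Passing from ``flat in measure/distance'' to ``Lipschitz graph'' is exactly the step the monotonicity argument exists to carry out, and you cannot presuppose its conclusion when applying Proposition~\ref{prop:BH_rhs_positiva}.

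The paper avoids this by proving the directional positivity via a barrier/ABP argument (Lemma~\ref{lem:classical_obstacle_derivada_positiva}) that makes no regularity assumption on $\partial\Omega_k$. The key device there is to compare $w=\partial_\nu u_r$ with the auxiliary function $w-\eta\big(u_r-\tfrac{f(x_0)}{2n}|x-x_0|^2\big)$: since $u_r$ itself vanishes (to second order) on the free boundary, this barrier automatically has the right sign on $\partial\Omega_k$ regardless of how irregular that set may be, and ABP converts the boundary and exterior bounds into interior positivity once the scaled right-hand sides ($\|\partial_\nu f\|_{L^n(B_r)}\to 0$ and a Poincaré-controlled term) are small. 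You should replace the appeal to Proposition~\ref{prop:BH_rhs_positiva} with such a barrier argument; the rest of your outline (cone of directions around $e$, interior/exterior cone condition, rescaling back) then matches the paper's proof.
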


We will denote
$$u_r(x) := \frac{u(rx)}{r^2}.$$
Observe that the blow-up hypothesis implies that for all $\varepsilon > 0$, there exists $r_0$ such that
$$\left|u_{r_0} - \frac{\gamma}{2}(x\cdot e)_+^2\right| < \varepsilon \quad \text{in} \quad B_1,$$
and
$$\big|\partial_\nu u_{r_0} - \gamma(x\cdot e)_+(x\cdot \nu)\big| < \varepsilon \quad \text{in} \quad B_1$$
for all $\nu \in \mathbb{S}^{n-1}$.

To prove that the free boundary is Lipschitz, we will use the interior and exterior cone conditions, and to do this we will prove that $\partial_\nu u_r \geq 0$, with $\nu$ a unit vector, when $\nu \cdot e > c(L)$, where $c(L)$ is the positive constant that ensures that the cone $\{x \in \R^n : x\cdot e = |x|c(L)\}$ has Lipschitz constant $L$. We need a positivity lemma.

\begin{lem}\label{lem:classical_obstacle_derivada_positiva}
Let $u$ be a solution of (\ref{eq:classical_obstacle}) with $f \in W^{1,n}(B_1)$, $r > 0$ and ${\Omega = \{u_r > 0\}}$. Let $w = \partial_\nu u_r$. Then, $w$ is a solution of
\begin{equation*}
\left\{
\begin{array}{rcll}
\Delta w & = & g & \text{in } \Omega \cap B_1\\
w & = & 0 & \text{on } \partial\Omega,
\end{array}
\right.
\end{equation*}
with $g(x) = r\partial_\nu f (rx)$. Assume that, denoting $N_\delta = \{x \in B_1 : d(x,\partial\Omega) < \delta\}$, we have
\begin{equation}\label{eq:condicions_lema_positivitat}
w > -\varepsilon \quad \text{in} \quad N_\delta \quad \text{and} \quad w > M \quad \text{in} \quad \Omega \setminus N_\delta,
\end{equation}
with positive $\varepsilon$ and $M$. Then, $w \geq 0$ in $\Omega \cap B_{1/2}$, provided that $\varepsilon$, $r$ and $\delta > 0$ are small enough.
\end{lem}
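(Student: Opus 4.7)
The plan is to reduce the lemma to a direct application of Proposition \ref{prop:BH_rhs_positiva} after rescaling and normalizing $w$. The hypothesis (\ref{eq:condicions_lema_positivitat}) already matches, up to a factor $M$, the structural hypotheses in (\ref{eq:condicions_iteracio_rhs}), so the main task is to verify the remaining quantitative assumptions: the Lipschitz character of $\Omega = \{u_r > 0\}$, the smallness of the right-hand side in $L^q$, and the smallness of the lower bound $\varepsilon$.

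First I would fix any $\kappa \in (1, 2 - n/q)$ and let $L_*$, $\varepsilon_0$, $\delta_0$, $c_0^*$ be the constants furnished by Proposition \ref{prop:BH_rhs_positiva} for $\mathcal{L} = \Delta$. By Proposition \ref{prop:classic_FB_Lipschitz}, there exists $r_1 > 0$ such that for $r < r_1$ the free boundary of $\{u_r > 0\}$ is a Lipschitz graph with constant strictly smaller than $L_*$, so $\Omega$ fits Definition \ref{defn:lipschitz_domain}. Next I would estimate the right-hand side of the equation for $w$: a direct change of variables gives
\[
\|g\|_{L^q(B_1)} = r^{1 - n/q}\,\|\partial_\nu f\|_{L^q(B_r)} \leq r^{1 - n/q}\,\|f\|_{W^{1,q}(B_1)}.
\]
Since $q > n$, we have $1 - n/q > 0$, so we may choose $r_2 > 0$ small enough that $\|g/M\|_{L^q(B_1)} \leq c_0^*$ for all $r < r_2$.

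With these parameters fixed, set $\tilde w := w/M$ and shrink $\varepsilon, \delta$ so that $\varepsilon/M \leq \varepsilon_0$ and $\delta \leq \delta_0$. Then $\tilde w$ solves $\Delta \tilde w = g/M$ in $\Omega \cap B_1$, vanishes on $\partial\Omega \cap B_1$ (using $C^{1,1}$ regularity of $u$ for $f \in W^{1,q}$, $q > n$, which gives $\nabla u = 0$ on the regular part of the free boundary), and satisfies
\[
\tilde w \geq 1 \ \text{in}\ \Omega \cap \{d(x,\partial\Omega) > \delta\}, \qquad \tilde w \geq -\varepsilon_0 \ \text{in}\ \Omega \cap B_1.
\]
These are exactly the hypotheses (\ref{eq:condicions_iteracio_rhs}) of Proposition \ref{prop:BH_rhs_positiva}, which therefore gives $\tilde w > 0$ in $\Omega \cap B_{2/3}$. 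Since $B_{1/2} \subset B_{2/3}$, we conclude $w \geq 0$ in $\Omega \cap B_{1/2}$, as desired.

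The only delicate point, and the one that really uses $f \in W^{1,q}$ with $q > n$ rather than merely Lipschitz, is the rescaling estimate for $\|g\|_{L^q(B_1)}$; the factor $r^{1 - n/q}$ is precisely what allows us to absorb the growth under the blow-up and meet the smallness threshold $c_0^*$ of Proposition \ref{prop:BH_rhs_positiva}. Everything else is a routine matching of the hypotheses.
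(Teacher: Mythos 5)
Your proposal has a fatal circularity: you invoke Proposition~\ref{prop:classic_FB_Lipschitz} to conclude that the free boundary of $\{u_r > 0\}$ is Lipschitz with small constant, so that you can fit $\Omega$ into Definition~\ref{defn:lipschitz_domain} and apply Proposition~\ref{prop:BH_rhs_positiva}. But look at the proof of Proposition~\ref{prop:classic_FB_Lipschitz} in the paper: it explicitly applies Lemma~\ref{lem:classical_obstacle_derivada_positiva} (the very statement you are trying to prove) to the directional derivatives $\partial_\nu u_r$ in order to deduce the cone conditions, i.e.\ the Lipschitz regularity of the free boundary. So the logical order is Lemma~\ref{lem:classical_obstacle_derivada_positiva} $\Rightarrow$ Proposition~\ref{prop:classic_FB_Lipschitz}, and your argument reverses it. More fundamentally, Lemma~\ref{lem:classical_obstacle_derivada_positiva} is stated with \emph{no} a priori regularity assumption on $\partial\Omega = \partial\{u_r > 0\}$; the hypotheses concern only the sign and size of $w$ in a neighbourhood $N_\delta$ and away from it. Any route that needs $\Omega$ to be a small-Lipschitz epigraph is importing a hypothesis the lemma does not grant.

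There is a second, independent gap: you use $q > n$ to gain the factor $r^{1-n/q} \to 0$ in the $L^q$ estimate of $g$, and you need $q > n$ because Proposition~\ref{prop:BH_rhs_positiva} itself demands $q > n$. But the lemma is stated only for $f \in W^{1,n}(B_1)$. The smallness of the right-hand side under blow-up must therefore come from a borderline $L^n$ estimate, namely $\|g\|_{L^n(B_1)} = \|\partial_\nu f\|_{L^n(B_r)} \to 0$ by absolute continuity of the integral, not from a positive power of $r$. Your proof would prove a weaker lemma.

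The paper's own argument is entirely different and avoids both issues. It is a direct barrier/maximum principle argument: assume $w(x_0) < 0$ for some $x_0 \in N_\delta \cap B_{1/2}$ and consider the auxiliary function
$$v(x) = w(x) - \eta\left(u_r(x) - \frac{f(x_0)}{2n}\,|x-x_0|^2\right)$$
on $\Omega \cap B_{1/4}(x_0)$. The hypotheses on $w$, together with $u_r = 0$ on $\partial\Omega$ and the boundedness of $\|u_r\|_{\mathcal{C}^1}$, give a uniform positive lower bound for $v$ on the whole boundary of $\Omega \cap B_{1/4}(x_0)$ (choosing $\eta$, then $\varepsilon$, $\delta$ small). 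One computes $\Delta v = g - \eta(f(rx) - f(rx_0))$ and applies the ABP estimate (Theorem~\ref{thm:ABP}) with $L^n$ norms; the scaling identity $\|g\|_{L^n(B_{1/4}(x_0))} = \|\partial_\nu f\|_{L^n(B_{r/4}(rx_0))}$ and a Poincar\'e inequality for $f - f(x_0)$ show both right-hand side terms vanish as $r \to 0$, contradicting $v(x_0) < 0$. This needs no Lipschitz structure on $\Omega$, no boundary Harnack machinery, and only $f \in W^{1,n}$.
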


\begin{proof}
First, it is clear that $w > 0$ in $\Omega \setminus N_\delta$. Suppose there exists $x_0 \in B_{1/2} \cap N_\delta$ such that $w(x_0) < 0$. We will arrive at a contradiction using the maximum princple, combined with the ABP estimate, with the function
$$v(x) = w(x) - \eta\left(u_r(x) - \frac{f(x_0)}{2n}|x - x_0|^2\right).$$

Consider the set $\Omega \cap B_{1/4}(x_0)$. On $\partial\Omega$, $u_r = 0$, hence $v \geq 0$. On $\partial B_{1/4}(x_0) \cap N_\delta$,
$$v(x) \geq -\varepsilon - \eta\delta\|u_r\|_{\mathcal{C}^1} + \frac{\eta}{32n}.$$

On $\partial B_{1/4}(x_0) \cap (\Omega \setminus N_\delta)$,
$$v(x) \geq M - \eta\|u_r\|_{\mathcal{C}^1}.$$

Notice that $\|u_r\|_{\mathcal{C}^1}$ is uniformly bounded as $r \rightarrow 0$. Hence, choosing $\eta$ small enough, the second inequality implies $v \geq M/2$. For the first inequality, choosing now small enough $\varepsilon$ and $\delta$, we obtain $v \geq \eta/(64n)$.

This function satisfies $\Delta v (x) = g(x) - \eta(f(x) - f(x_0))$, hence, by the ABP estimate,
$$v(x_0) \geq \min\{M/2,\eta/(64n)\} - C\|g(x) - \eta(f(rx)-f(rx_0))\|_{L^n(B_{1/4}(x_0))}.$$

We estimate $g$ and $f - f(x_0)$ separately. Using the scaling of the $L^n$ norm and taking $r \rightarrow 0$,
$$\|g\|_{L^n(B_{1/4}(x_0))} = \|\partial_\nu f\|_{L^n(B_{r/4}(rx_0))} \rightarrow 0.$$
On the other hand, by the Poincaré inequality,
$$\|f(rx) - f(rx_0)\|_{L^n(B_{1/4}(x_0))} = \frac{\|f - f(x_0)\|_{L^n(B_{r/4}(rx_0))}}{r} \leq C\|\nabla f\|_{L^n(B_{r/4}(rx_0))} \rightarrow 0.$$

Hence, choosing $r$ small enough, ve can have $v(x_0) \geq \min\{M/2,\eta/(64n)\}/2$, which contradicts $v(x_0) < 0$.
\end{proof}

Using the lemma, we prove that there is an arbitrarily wide cone of directions where $\partial_\nu u_r \geq 0$, for small $r > 0$.

\begin{proof}[Proof of Proposition \ref{prop:classic_FB_Lipschitz}]
We only need to check that, for any $\nu \in \mathbb{S}^{n-1}$ such that ${\nu \cdot e > c(L)}$, the hypotheses of the lemma hold. By construction, we only need to check that (\ref{eq:condicions_lema_positivitat}) holds for a small enough $r > 0$.

Let $\delta = \varepsilon^{1/8}$. By the blow-up, there exists $r > 0$ such that
$$\left|u_r - \frac{\gamma}{2}(x\cdot e)_+^2\right| < \varepsilon,$$
and
$$\big|\partial_\nu u_r - \gamma(x\cdot e)_+(x\cdot \nu)\big| < \varepsilon.$$

Hence, if $\varepsilon > 0$ is small enough,
$$u_r > \frac{\gamma}{2}(x\cdot e)_+^2 - \varepsilon > \frac{\gamma}{2}\delta^4 - \varepsilon = \frac{\gamma}{2}\varepsilon^{1/2} - \varepsilon > 0 \quad \text{in} \quad \{x \cdot e > \delta^2\}.$$
Moreover,
$$u_r = 0 \quad \text{in} \quad \{x \cdot e < -\delta^2\},$$
as we prove by contradiction from the nondegeneracy. Suppose $u_r(y) > 0$ for some $y$ such that $y \cdot e < -\delta$. Then,
$$\sup\limits_{B_{\delta^2}(y)}u_r \geq c\delta^4 = c\varepsilon^{1/2},$$
but since $B_{\delta^2}(y) \subset \{x \cdot e < 0\}$, $u_r < \varepsilon$, which cannot happen if $\varepsilon$ is small enough. Hence, the free boundary is contained in the strip $\{|x \cdot e| < \delta^2\}$.

Now, let a unit $\nu$ such that $\nu\cdot e > c(L)$. The lower bound for $\partial_\nu u_r$ in $N_\delta$ only takes into account the convergence of the blow-up,
$$\partial_\nu u_r > \gamma c(L)(x\cdot e)_+ - \varepsilon \geq -\varepsilon.$$
On the other hand, if $z \in \Omega \setminus N_\delta$, since the free boundary is at a distance lower than $\delta^2$ from the hyperplane $\{x\cdot e = 0\}$, $z \cdot e > \delta - \delta^2$. Hence,
$$\partial_\nu u_r(z) > \gamma c(L)(z \cdot e)_+ - \varepsilon > \gamma c(L)(\delta - \delta^2) - \varepsilon = \gamma c(L)(\varepsilon^{1/8} - \varepsilon^{1/4}) - \varepsilon =: M,$$
where $M > 0$ provided that $\varepsilon$ is small enough.

Notice that $r$ and $\varepsilon$ (thus $\delta$) are uniform in $\nu$. Now, applying the previous Lemma \ref{lem:classical_obstacle_derivada_positiva}, for all unit $\nu$ such that $\nu \cdot e > c(L)$, $\partial_\nu u_r \geq 0$, which is equivalent to $\partial_\nu u \geq 0$ in $B_r$. Now, if $x_0 \in B_r$ is a free boundary point, $u(x_0) = 0$, hence $u(x_0 - t\nu) \leq 0$ whenever $x_0 - t\nu \in B_r$. Since $u \geq 0$, there is a cone \textit{behind} $x_0$ where $u = 0$, i.e.
$$u = 0 \quad \text{in} \quad B_r \cap \{(x - x_0) \cdot e < -c(L)|x|\}.$$
In the interior of the cone, there are no free boundary points because $u$ is $0$ in a neighbourhood of all points. This is the interior cone. To check the exterior cone condition, suppose there is another free boundary point $x_1$ in the set\linebreak $B_r \cap \{x_0 + t\nu : \nu \cdot e > c(L), t \in \R^+\}$. Then, by applying the interior cone condition to $x_1$, we get that $x_0$ cannot be a free boundary point, a contradiction. This proves that, in $B_r$, the free boundary is a Lipschitz graph with constant $L$ in the direction~$e$.
\end{proof}

Now we can use our new boundary Harnack inequality to prove the $\mathcal{C}^{1,\alpha}$ regularity of the free boundary at regular points \textit{à la} Caffarelli. To do this, we must ask the right hand side to belong to $W^{1,q}$ with $q > n$, which is slightly more restrictive and implies that $f$ is Hölder continuous.

\begin{proof}[Proof of Corollary \ref{cor:classical_obstacle_lq}]
As it is customary in this kind of proof, we will use the boundary Harnack with the derivatives of $u_r$. Let $L = L_0(q,n,1,1)/2$ with the $L_0$ defined in Corollary \ref{cor:BH_rhs_symmetric}. From Proposition \ref{prop:classic_FB_Lipschitz}, there exists $r > 0$ such that the free boundary is a Lipschitz graph with constant $L$ in $B_r$. Assume without loss of generality that the direction of the graph is $e = e_n$, and that $L < 1$.

For $i = 1,\ldots,n-1$, consider the functions
$$w_1 = \partial_i u_r \quad \text{and} \quad w_2 = \partial_n u_r.$$

They are both solutions of $\Delta w_j = g_j$, with $g_1(x) = r\partial_i f(rx)$, $g_2(x) = r\partial_n f(rx)$. Moreover, $w_2$ is positive. To be able to use the boundary Harnack, we need to see that the right hand is small. Indeed, taking $r \rightarrow 0$,
$$\|g_j\|_{L^q(B_1)} \leq \|r\nabla f(rx)\|_{L^q(B_1)} = 2r^{1-n/q}\|\nabla f\|_{L^q(B_r)} \rightarrow 0.$$
Finally, by the blow-up convergence, 
$$w_j(e_n/2) > \gamma/2 - \varepsilon > \gamma/4, \quad w_j(e_n/2) < \gamma/2 + \varepsilon < \gamma.$$
Thus, we can normalize $w_j$ dividing by $w_j(e_n/2)$ and the right hand side still converges to $0$ in norm.

Let $\Omega_r = \{u_r > 0\}$. By the boundary Harnack with right hand side, Theorem \ref{thm:BH_rhs},
$$\frac{w_1}{w_2} \in \mathcal{C}^{0,\alpha}(B_{1/2}\cap\Omega_r) \quad \Rightarrow \quad \frac{\partial_i u_r}{\partial_n u_r} \in \mathcal{C}^{0,\alpha}(B_{1/2}\cap\Omega_r).$$

The unit normal vector to any level set $\{u_r = t\}$, $t > 0$, is, by components,
$$\hat{n}^i = \frac{\partial_i  u_r}{|\nabla u_r|} = \frac{\partial_i u_r / \partial_nu_r}{\sqrt{1 + \sum_{j = 1}^{n - 1} (\partial_j u_r / \partial_nu_r)^2}} \in \mathcal{C}^{0,\alpha}(B_{1/2}\cap\Omega_r).$$

As this expression is $\mathcal{C}^{0,\alpha}$ up to the boundary, this proves the normal vector to the free boundary is $\mathcal{C}^{0,\alpha}$, and by a simple calculation it follows that the free boundary is $\mathcal{C}^{1, \alpha}$.
\end{proof}

\subsection{$\mathcal{C}^{1,\alpha}$ regularity of the free boundary in the fully nonlinear obstacle problem}
Consider the fully nonlinear obstacle problem in the general version (\ref{eq:fully_nonlinear}), under the assumptions in Corollary \ref{cor:fully_nonlinear_thick}.

Our starting point will be the existence of a regular blow-up in the sense of Definition \ref{defn:regular_point_classical}, i.e., there exists $r_k \downarrow 0$ such that
$$\frac{u(r_kx)}{r_k^2} \rightarrow \frac{\gamma}{2}(x\cdot e)_+^2 \quad \text{in } \mathcal{C}^1_{\mathrm{loc}}(\R^n)$$
for some $\gamma > 0$ and $e \in \mathbb{S}^{n-1}$. We also need the classical nondegeneracy condition: if $x_0 \in \overline{\{u > 0\}}$,
$$\sup\limits_{B_r(x_0)} u \geq cr^2.$$

From here, we will extend the proof of \cite{IM16} to the case where $f \in W^{1,q}$ (and not necessarily Lipschitz), and we will also prove $\mathcal{C}^{1,\alpha}$ regularity of the free boundary.

Our first step is an analogue to \cite[Lemma 3.7]{IM16} for the case $f \in W^{1,n}$.

\begin{lem}\label{lem:indrei_minne_modificat}
Let $u$ be a solution of
\begin{equation*}
\left\{
\begin{array}{rcll}
F(D^2u(x),rx) & = & f(rx)\chi_{\{u > 0\}} & \text{a.e.\ in } B_1\\
u & \geq & 0 & \text{a.e.\ in } B_1.
\end{array}
\right.
\end{equation*}
Assume that the conditions (H1), (H2) and (H3) from Corollary \ref{cor:fully_nonlinear_thick} hold. If
$$C_0\p_\nu u - u \geq - \varepsilon \quad \text{in} \quad B_1,$$
for any $C_0 > 0$, then
$$C_0\p_\nu u - u \geq 0 \quad \text{in} \quad B_{1/2},$$
provided that $r, \varepsilon > 0$ are sufficiently small.
\end{lem}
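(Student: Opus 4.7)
My plan is to argue by contradiction, adapting the barrier/ABP strategy of Lemma \ref{lem:classical_obstacle_derivada_positiva} to the fully nonlinear setting via the convexity of $F$. Throughout, set $w := C_0\p_\nu u - u$.

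\textbf{Step 1: linear inequality for $w$.} By Evans--Krylov together with assumptions (H2)--(H3), $u \in \mathcal{C}^{2,\alpha}_{\mathrm{loc}}(\{u>0\})$, so I may differentiate $F(D^2u(x), rx) = f(rx)$ along $\nu$ to obtain, for the uniformly elliptic linearization $\mathcal{L}\phi := F_{ij}(D^2 u, rx)\p_{ij}\phi$,
\[
\mathcal{L}(\p_\nu u) = r(\p_\nu f)(rx) - r(\p_{x_\nu}F)(D^2u, rx) \quad \text{in } \{u>0\}.
\]
Convexity of $F(\cdot,y)$ combined with $F(0,y)=0$ from (H1) yields $\mathcal{L}u \geq F(D^2u, rx) = f(rx) \geq \tau_0$. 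Subtracting gives
\[
\mathcal{L}w \leq -\tau_0 + C_0\,r\,H(rx),
\]
where $H(y) := |\nabla f(y)| + \sup_{|M|\leq K}|\nabla_x F(M,y)|$ belongs to $L^q$, and the change of variables delivers the crucial scaling $\|rH(r\cdot)\|_{L^n(B_1)} \leq Cr^{1-n/q} \to 0$ as $r\to 0$ (using $q > n$). Moreover, by $\mathcal{C}^{1,1}$ regularity of $u$ (known for the fully nonlinear obstacle problem), $\nabla u = 0$ on $\p\{u>0\}$, so $w = 0$ there.

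\textbf{Step 2: barrier construction.} Suppose toward contradiction $w(x_0) < 0$ for some $x_0 \in B_{1/2}$; necessarily $x_0 \in \{u>0\}$. In analogy with the classical case, consider
\[
\Phi(x) := w(x) - \eta\bigl(u(x) - \tfrac{\tau_0}{4n\Lambda}|x-x_0|^2\bigr)
\]
on $\Omega_0 := B_\rho(x_0) \cap \{u>0\}$, with $\rho,\eta > 0$ to be chosen in terms of $C_0$, $\tau_0$ and the universal $L^\infty$ bound $U := \|u\|_{L^\infty(B_1)}$. Using $\mathcal{L}u \geq \tau_0$ and $\mathcal{L}(|x-x_0|^2) \leq 2n\Lambda$,
\[
\mathcal{L}\Phi \leq -\tfrac{\tau_0}{2} + C_0\,r\,H(rx).
\]
On $\p\{u>0\}\cap\overline{B_\rho(x_0)}$, $u = w = 0$ and the quadratic is nonnegative, giving $\Phi \geq 0$; on $\p B_\rho(x_0)\cap\{u>0\}$, the hypothesis $w \geq -\varepsilon$ yields
\[
\Phi \geq -\varepsilon - \eta U + \eta\tfrac{\tau_0\rho^2}{4n\Lambda},
\]
which is nonnegative for $\rho,\eta$ chosen appropriately and then $\varepsilon$ small.

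\textbf{Step 3: ABP and contradiction.} Let $\psi$ solve $\mathcal{L}\psi = -\tau_0/2$ in $\Omega_0$ with $\psi = 0$ on $\p\Omega_0$; comparison with a paraboloid gives $\psi(x_0) \geq c_\ast\tau_0\,d(x_0, \p\Omega_0)^2$. Then $\mathcal{L}(\Phi - \psi) \leq C_0\,r\,H(rx)$ and $\Phi - \psi \geq 0$ on $\p\Omega_0$, so Theorem \ref{thm:ABP} applied to $-(\Phi - \psi)$ produces
\[
\Phi(x_0) \geq \psi(x_0) - C\,\|C_0\,r\,H(r\cdot)\|_{L^n(\Omega_0)} > 0
\]
for $r$ sufficiently small, contradicting $\Phi(x_0) \leq w(x_0) < 0$.

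\textbf{Main obstacle.} The subtle point is making the argument uniform in $x_0 \in B_{1/2}$, especially when $x_0$ is close to the free boundary so that $d(x_0,\p\Omega_0)$ is small and the lower bound on $\psi(x_0)$ degenerates. I would handle this by choosing $\rho$ adaptively (with $\rho \simeq d(x_0, \p\{u>0\})$ in the near-boundary regime) and relying on the universal $\mathcal{C}^{1,1}$ bound and the nondegeneracy $u \geq c\,d(\cdot,\p\{u>0\})^2$ to absorb the error terms and close the inequality uniformly in the position of $x_0$.
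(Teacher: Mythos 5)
You have correctly identified the linearization strategy: compute $\mathcal{L}(\partial_\nu u)\leq r(\partial_\nu f - \partial_{2,\nu}F)$ using convexity and the subdifferential, note $\mathcal{L}u\geq f\geq\tau_0$ from $F(0,\cdot)=0$ and convexity, and close via the ABP estimate with the right scaling of the $L^n$-norm of the right hand side. This is the same skeleton as the paper. However, there are two concrete problems with your barrier and closing argument.

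\textbf{The extra $\eta$ in the barrier breaks the boundary estimate.} Your barrier is $\Phi = (C_0\partial_\nu u-u)-\eta\bigl(u-\tfrac{\tau_0}{4n\Lambda}|x-x_0|^2\bigr)$, which carries an extra $-\eta u$ compared with the paper's barrier $w_{\mathrm{paper}} = C_0\partial_\nu u-u+\tfrac{\tau_0}{4n\Lambda}|x-y_0|^2$. You are importing the structure of Lemma \ref{lem:classical_obstacle_derivada_positiva}, where an $-\eta u_r$ term is harmless because the hypotheses there include a $\delta$-neighborhood $N_\delta$ of the free boundary on which $u_r$ is bounded by $\delta\|u_r\|_{\mathcal{C}^1}$, and $w\geq M>0$ outside it. Lemma \ref{lem:indrei_minne_modificat} has no such decomposition: the only lower bound for $C_0\partial_\nu u - u$ is the uniform $-\varepsilon$. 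Consequently, on $\partial B_\rho(x_0)\cap\{u>0\}$ you obtain $\Phi\geq -\varepsilon-\eta U+\eta\tfrac{\tau_0\rho^2}{4n\Lambda}$ with $U=\|u\|_{L^\infty(B_1)}$, and nonnegativity would require $\eta\bigl(\tfrac{\tau_0\rho^2}{4n\Lambda}-U\bigr)\geq\varepsilon$. Since $\rho\leq 1$ and $U$ is of order one (it is a universal $\mathcal{C}^{1,1}$-bound for the rescaled solution, not small), the factor $\tfrac{\tau_0\rho^2}{4n\Lambda}-U$ is negative and the inequality cannot hold for any admissible $\eta,\rho,\varepsilon>0$. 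The fix is to drop the $\eta$-subtraction entirely: the crucial $-\tau_0$ drift is already supplied by the $-u$ that is \emph{built into} $C_0\partial_\nu u - u$ via $\mathcal{L}u\geq\tau_0$, so one only needs to \emph{add} the quadratic $\tfrac{\tau_0}{4n\Lambda}|x-y_0|^2$ to keep the boundary values nonnegative on $\partial\Omega$ and strictly positive on $\partial B_{1/4}(y_0)\cap\Omega$.

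\textbf{Step 3 is over-engineered and leaves the near-boundary case open.} The paper applies ABP directly to $w_{\mathrm{paper}}$ in $\Omega\cap B_{1/4}(y_0)$: the positive boundary data combined with $\|rR(r\cdot)\|_{L^n}\to 0$ gives the contradiction. Your extra function $\psi$ solving $\mathcal{L}\psi=-\tau_0/2$ and the comparison $\psi(x_0)\geq c_*\tau_0\,d(x_0,\partial\Omega_0)^2$ reintroduces a dependence on $d(x_0,\partial\Omega_0)$, which you correctly flag as the main obstacle but do not resolve. Invoking nondegeneracy and an adaptive radius is a plausible idea but it is not carried out, and the argument as written is not uniform in $x_0$. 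With the correct (unweighted) barrier this detour through $\psi$ is unnecessary, which is precisely why the paper's version is shorter.

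In summary: the overall plan matches the paper, but the barrier in Step 2 has a sign/size issue that cannot be repaired by tuning $\eta$ and $\rho$, and Step 3 creates a genuine gap near the free boundary that is acknowledged but not closed. Replacing your $\Phi$ by $w_{\mathrm{paper}}$ and applying ABP directly, as in the paper, removes both difficulties.
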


The proof is the same as in our source, except for the final step. We provide it here for the convenience of the reader.

\begin{obs}
For this lemma, the case $q = n$, i.e., when $f \in W^{1,n}$ and $F \in W^{1,n}$ with respect to the second variable, is also true.
\end{obs}

\begin{proof}
Let $x \in \{u > 0\}$ and $\p_1 F(M, x)$ denote the subdifferential of $F$ at the point $(M,x)$ with respect to the first variable. Since $F$ is convex in $M$, then ${\p_1 F(M, x) \neq \varnothing}$. Consider a measurable function $P^M$ with $P^M(x) \in \p_1 F(M,x)$. Since $f \in \mathcal{C}^\alpha$, by interior regularity estimates $u \in \mathcal{C}^{2,\alpha}_{\mathrm{loc}}(\{u > 0\})$, and thus we can define the measurable coefficients
$$a_{ij}(x) := (P^{D^2u(x)}(rx))_{ij} \in \p_1 F(D^2u(x),rx).$$

Since $F$ is convex in the first variable and $F(0,x) \equiv 0$, then for any unit vector $\nu$,
\begin{align*}
&\sum_{i,j=1}^na_{ij}(x)\frac{\p_{ij}u(x+h\nu) - \p_{ij}u(x)}{h} \leq \frac{F(D^2u(x+h\nu),rx) - F(D^2u(x),rx)}{h},\\
&\sum_{i,j=1}^na_{ij}(x)\p_{ij}u(x) = F(0,rx) + a_{ij}\p_{ij}u(x) \geq F(D^2u(x),rx) = f(rx),
\end{align*}
provided that $x+h\nu \in \{u > 0\} \cap B_1$. Now, since uniform limits of $L^n$-viscosity solutions are $L^n$-viscosity solutions (\cite[Theorem 3.8]{CCKS96}),
\begin{align*}
\sum_{i,j=1}^na_{ij}(x)\p_{ij}\p_\nu u(x) &\leq \limsup\limits_{h\rightarrow 0}\frac{F(D^2u(x+h\nu),rx) - F(D^2u(x),rx)}{h}\\
&= \limsup\limits_{h\rightarrow 0}\frac{F(D^2u(x+h\nu),rx) - f(rx)}{h}\\
&= \limsup\limits_{h\rightarrow 0}\frac{F(D^2u(x+h\nu),rx) - F(D^2u(x+h\nu),rx+rh\nu)}{h}+\\
&\phantom{= \limsup}+\frac{f(rx+rh\nu)-f(rx)}{h}\\
&= r(\p_\nu f)(rx) - r(\p_{2,\nu}F)(D^2u(x),rx)
\end{align*}
in the $L^n$-viscosity sense.

Suppose there exists $y_0 \in \Omega \cap B_{1/2}$ such that $C_0\partial_\nu u(y_0) - u(y_0) < 0$. Then, we consider the auxiliary function
$$w(x) = C_0\partial_\nu u(x) - u(x) + \tau_0\frac{|x-y_0|^2}{4n\Lambda}.$$
Then,
\begin{equation*}
\begin{split}
a_{ij}\p_{ij}w(x) &\leq rC_0(\partial_\nu f)(rx) - rC_0(\partial_{2,\nu}F)(D^2u(x),rx) - f(rx) + \tau_0/2\\
&\leq rC_0(\partial_\nu f(rx) - \partial_{2,\nu}F(D^2u(x),rx)) - f(rx) + \tau_0/2\\
&\leq rC_0(\partial_\nu f(rx) - \partial_{2,\nu}F(D^2u(x),rx)) =: rR(rx).
\end{split}
\end{equation*}
Hence, by the ABP estimate, since $R \in L^n(B_1)$,
$$0 > \inf\limits_{\Omega \cap B_{1/4}(y_0)} w \geq \inf\limits_{\p(\Omega \cap B_{1/4}(y_0))} w - C\|rR(rx)\|_{L^n(\Omega \cap B_{1/4}(y_0))}.$$
By the scaling of the $L^n$ norm, the second term in the sum is bounded by
$$C\|R\|_{L^n(B_{r/4}(ry_0))} \rightarrow 0$$
as $r \rightarrow 0$. On the other hand, $w \equiv 0$ on $\p\Omega$, and
$$w \geq -\varepsilon + \frac{\tau_0}{64n\Lambda} \quad \text{on} \quad \Omega \cap \partial B_{1/4}.$$

Therefore, choosing $\varepsilon$ and $r$ small enough we reach $w > 0$ in $\Omega \cap B_{1/4}(y_0)$, a contradiction.
\end{proof}

Now, as we show next, by the $\mathcal{C}^1$ convergence of the blow-up we can fulfill the sufficient conditions in Lemma \ref{lem:indrei_minne_modificat}, and prove that the free boundary is Lipschitz at regular points, analogously to Proposition \ref{prop:classic_FB_Lipschitz}. Then, applying the boundary Harnack inequality, we can improve the regularity up to $\mathcal{C}^{1,\alpha}$.

We denote $u_r(x) := r^{-2}u(rx)$ as in Proposition \ref{prop:classic_FB_Lipschitz}.
\begin{proof}[Proof of Corollary \ref{cor:fully_nonlinear_thick}]
Let
$$u_0(x) = \frac{\gamma}{2}(x\cdot e)^2_+$$
be the blow-up at 0. Let $s \in (0,1)$. Then,
$$\frac{\partial_\nu u_0}{s} - u_0 = \gamma\left(\frac{(x\cdot e)_+(e \cdot \nu)}{s} - \frac{(x\cdot e)_+^2}{2}\right) \geq 0$$
for any direction $\nu \in \mathbb{S}^{n-1}$ such that $\nu \cdot e \geq s/2$. From the $\mathcal{C}^1$ convergence of the blow-up, there exists $r_k$ such that
$$\frac{\partial_\nu u_{r_k}}{s} - u_{r_k} \geq -\varepsilon \quad \text{in} \quad B_1.$$

By Lemma \ref{lem:indrei_minne_modificat}, this implies
$$\frac{\partial_\nu u_{2\rho}}{s} - u_{2\rho} \geq 0 \quad \text{in} \quad B_{1/2},$$
for some sufficiently small $\rho > 0$.

In particular, this shows that the free boundary fulfills the interior and exterior cone conditions in $B_\rho$ and therefore it is Lipschitz, with Lipschitz constant $L(s)$, that satisfies $L(s) \rightarrow 0$ as $s \rightarrow 0$.

Now, assume without loss of generality $e = e_n$. For $i = 1,\ldots,n-1$, consider the functions
$$w_1 = \partial_i u_\rho \quad \text{and} \quad w_2 = \partial_n u_\rho.$$

Notice that $w_2 \geq 0$. Since $F$ is $\mathcal{C}^1$ with respect to $D^2u$ and $F(D^2u,x) \in W^{1,q}$, then $u \in W^{3,q}$ and we can commute the third derivatives as follows,
\begin{align*}
\partial_\nu F(D^2u_\rho(x), \rho x) &= \sum\limits_{i,j=1}^nF_{ij}\partial_\nu\partial_{ij}u_\rho + \rho\p_{2,\nu}F = \sum\limits_{i,j=1}^nF_{ij}\partial_{ij}(\partial_\nu u_\rho) + \rho\p_{2,\nu}F = \rho\partial_\nu f,\\
\mathcal{L}(\p_\nu u_\rho) &= \rho(\p_\nu f - \p_{2,\nu}F).
\end{align*}

Here, $\mathcal{L}w = \operatorname{Tr}(A(x)w)$, with $A(x) = (F_{ij}(D^2 u_\rho, \rho x))_{ij}$. Hence, $w_1$ and $w_2$ are both solutions of
$$\mathcal{L}w_1 = g_1 := \rho(\partial_i f - \p_{2,i}F)(\rho x) \quad \text{and} \quad \mathcal{L}w_2 = g_2 := \rho(\p_n f - \p_{2,n}F)(\rho x).$$

To be able to use the boundary Harnack, we need to show that the right hand is small. Indeed, taking $\rho \rightarrow 0$,
$$\|g_j\|_{L^q(B_1)} \leq \|\rho(\nabla f(\rho x) + \nabla_2 F(\rho x))\|_{L^q(B_1)} = \rho^{1-n/q}\|\nabla f + \nabla_2 F\|_{L^q(B_\rho)} \rightarrow 0.$$

Finally, by the blow-up convergence, 
$$w_j(e_n/2) > \gamma/2 - \varepsilon > \gamma/4, \quad w_j(e_n/2) < \gamma/2 + \varepsilon < \gamma.$$
Thus, we can normalize $w_j$ dividing by $w_j(e_n/2)$ and the right hand side still converges to $0$ in norm.

Let $\Omega_\rho = \{u_\rho > 0\}$. By the boundary Harnack with right hand side, Theorem \ref{thm:BH_rhs},
$$\frac{w_1}{w_2} \in \mathcal{C}^{0,\alpha}(\Omega_\rho\cap B_{1/2}) \quad \Rightarrow \quad \frac{\partial_i u_\rho}{\partial_n u_\rho} \in \mathcal{C}^{0,\alpha}(\Omega_\rho\cap B_{1/2}).$$

The unit normal vector to any level set $\{u_\rho = t\}$, $t > 0$, is, by components,
$$\hat{n}^i = \frac{\partial_i  u_\rho}{|\nabla u_\rho|} = \frac{\partial_i u_\rho / \partial_nu_\rho}{\sqrt{1 + \sum_{j = 1}^{n - 1} (\partial_j u_\rho / \partial_nu_\rho)^2}} \in \mathcal{C}^{0,\alpha}(\Omega_\rho\cap B_{1/2}).$$

As this expression is $\mathcal{C}^{0,\alpha}$ up to the boundary, this proves the normal vector to the free boundary is $\mathcal{C}^{0,\alpha}$, and it follows that the free boundary is $\mathcal{C}^{1, \alpha}$.
\end{proof}

\subsection{$\mathcal{C}^{1,\alpha}$ regularity of the free boundary in the fully nonlinear thin obstacle problem}
Recall the fully nonlinear thin obstacle problem (\ref{eq:fully_nonlinear_signorini}), under the assumptions in Corollary \ref{cor:fully_nonlinear_thin}.

We will denote $u = v - \varphi$. In this case, we know the following.

\begin{prop}[\cite{RS16}]
Assume that $0$ is a regular free boundary point for (\ref{eq:fully_nonlinear_signorini}), where $F$ is uniformly elliptic, convex and $F(0) = 0$, and $\varphi \in \mathcal{C}^{1,1}$. Then, there exists $e \in \mathbb{S}^{n-1} \cap \{x_n = 0\}$ such that for any $L > 0$ there exists $r > 0$ for which
$$\partial_\nu u \geq 0 \quad \text{in} \quad B_r \quad \text{for all} \quad \nu \cdot e \geq \frac{L}{\sqrt{L^2+1}}, \quad \nu \in \mathbb{S}^{n-1}\cap\{x_n = 0\}.$$
In particular, the free boundary is Lipschitz in $B_r$, with Lipschitz constant $L$.
\end{prop}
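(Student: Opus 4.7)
The plan is to follow the blow-up plus positivity-lemma strategy used in \cite{RS16}, which is the fully nonlinear analogue of the classical Athanasopoulos--Caffarelli--Salsa argument. First I would translate the obstacle away by setting $u := v - \varphi$, so that (\ref{eq:fully_nonlinear_signorini}) becomes a thin obstacle problem with zero obstacle for the operator $\tilde F(M,x) := F(M + D^2\varphi(x))$, which is still uniformly elliptic and convex in $M$, with $\tilde F(0,x)$ bounded since $\varphi \in \mathcal{C}^{1,1}$. The free boundary is unchanged by this translation. The tangential direction $e$ is the one provided by the definition of a regular point in \cite{RS16}: along some sequence $r_k \downarrow 0$, the normalized blow-ups
$$u_{r_k}(x) := \frac{u(r_k x)}{\|u(r_k \cdot)\|_{L^\infty(B_1)}}$$
converge in $\mathcal{C}^1_{\mathrm{loc}}(\R^n)$ to the Signorini half-space profile $u_0$ associated with $e$, which is strictly monotone in every tangential direction $\nu$ with $\nu\cdot e > 0$.

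The main tool is a positivity lemma analogous to Lemma \ref{lem:indrei_minne_modificat}: for any fixed $C_0>0$, if $C_0\partial_\nu u_r - u_r \geq -\varepsilon$ in $B_1$ and $\varepsilon, r$ are small enough, then $C_0\partial_\nu u_r - u_r \geq 0$ in $B_{1/2}$. Its proof follows the same ABP contradiction scheme: at a putative interior point $y_0$ where the inequality fails, one introduces the auxiliary function
$$w(x) := C_0\partial_\nu u_r(x) - u_r(x) + \frac{\tau_0}{4n\Lambda}|x-y_0|^2,$$
which, after linearizing the convex operator $\tilde F$ at $D^2 u_r$ on the two half-balls $\{x_n>0\}$ and $\{x_n<0\}$ where $u_r$ is smooth, satisfies a linear non-divergence inequality $\mathcal{L} w \leq r R(r\cdot)$ with right-hand side of order $r$ in $L^n$. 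On the boundary $\partial B_{1/4}(y_0)$ the hypothesis $C_0\partial_\nu u_r - u_r \geq -\varepsilon$ yields $w \geq \tau_0/(64n\Lambda) - \varepsilon$, while on the thin set $\{x_n=0\}$ one uses the Signorini complementarity together with $u_r(\cdot,0)\geq 0$ to control $w$ from below. The ABP estimate then forces $w(y_0)\geq 0$, a contradiction.

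To conclude, the $\mathcal{C}^1$ convergence of $u_{r_k}$ to $u_0$ gives, uniformly over the compact cone of directions $\{\nu \in \mathbb{S}^{n-1}\cap\{x_n=0\} : \nu\cdot e \geq s\}$, an inequality $C_0(s)\partial_\nu u_{r_k} - u_{r_k} \geq -\varepsilon_k$ with $\varepsilon_k\to 0$, where $C_0(s)$ comes from the strict tangential monotonicity of $u_0$. Applying the positivity lemma for each such $\nu$ yields $\partial_\nu u_{r_k} \geq 0$ in $B_{1/2}$, hence $\partial_\nu u \geq 0$ in $B_{r_k/2}$. Choosing $s = s(L)$ so that the opening of this cone matches $L/\sqrt{L^2+1}$ delivers the stated directional monotonicity, and the standard observation that a set monotone along every direction of an open cone satisfies interior and exterior cone conditions with Lipschitz constant $L$ completes the proof.

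The main obstacle is the positivity lemma in the thin-obstacle setting: unlike the thick case treated in Lemma \ref{lem:indrei_minne_modificat}, the contact set is $(n-1)$-dimensional, and the Signorini complementarity $u \geq 0$, $\partial_{x_n^+}u + \partial_{x_n^-}u \leq 0$, $u(\partial_{x_n^+}u + \partial_{x_n^-}u) = 0$ on $\{x_n=0\}$ must replace the Dirichlet condition in controlling $w$ on the thin boundary. Setting up the auxiliary function so that it respects the asymmetry of $\partial_{x_n} u_r$ across $\{x_n=0\}\cap\{u_r>0\}$ while remaining a viscosity subsolution of a linear non-divergence equation with vanishing-in-$L^n$ right-hand side is the delicate technical point; once accomplished, the rest of the argument is the standard blow-up plus ABP contradiction.
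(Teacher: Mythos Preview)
The paper does not prove this proposition at all: it is stated with the citation \cite{RS16} and used as a black box. There is therefore no ``paper's own proof'' to compare your proposal against; the authors simply import the result from Ros-Oton--Serra and proceed directly to apply Theorem~\ref{thm:BH_rhs_slit} on top of it in the proof of Corollary~\ref{cor:fully_nonlinear_thin}.

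Your sketch is a reasonable outline of the strategy in \cite{RS16}, and you correctly flag the genuine difficulty: adapting the ABP-based positivity lemma to the slit geometry, where the contact set is codimension-one and one cannot simply impose $w=0$ on $\partial\Omega$. One caution: the argument in \cite{RS16} does not proceed by a direct analogue of Lemma~\ref{lem:indrei_minne_modificat} applied across the slit; rather, it establishes semiconvexity of $u$ in tangential directions and combines this with the classification of blow-ups to obtain the cone of monotonicity. Your auxiliary-function approach with the Signorini complementarity replacing the Dirichlet condition is plausible in spirit, but as written it is only a plan---the step ``on the thin set $\{x_n=0\}$ one uses the Signorini complementarity \ldots\ to control $w$ from below'' is exactly where the work lies, and you have not indicated how the jump in $\partial_{x_n} u_r$ interacts with the linearized operator or why $w$ remains a supersolution across $\{x_n=0\}\cap\{u_r>0\}$. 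If you intend to supply a self-contained proof rather than cite \cite{RS16}, that step needs to be made precise.
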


Now, using our new boundary Harnack in slit domains, Theorem \ref{thm:BH_rhs_slit}, on top of this proposition, we derive the $\mathcal{C}^{1,\alpha}$ regularity of the free boundary at regular points.

\begin{proof}[Proof of Corollary \ref{cor:fully_nonlinear_thin}]
Let $\Omega = B_1 \setminus \{(x',0) : u(x',0) = 0\}$. The free boundary is a Lipschitz graph inside $B_r \cap \{x_n = 0\}$. Suppose without loss of generality that the direction of the graph is $e = e_{n - 1}$. Choosing $L$ and $r$ small enough, for all $\nu \in \mathbb{S}^{n} \cap \{x_n = 0\}$ such that $\nu\cdot e_{n-1} \geq 1/2$, $\partial_\nu u \geq 0$ in $B_r$.

For $i = 1, \ldots, n - 2$, consider the functions
$$w_1 = \p_iu, \quad w_2 = \p_{n-1}u.$$

Since $F \in \mathcal{C}^1$ and $F(D^2v) = 0$, then $v \in W^{3,p}$ for all $p < \infty$ and we can commute the third derivatives as follows,
\begin{equation*}
\begin{split}
   \partial_\nu(F(D^2v)) &= 0 \quad \text{in} \quad \Omega,\\
   \partial_\nu(F(D^2v)) &= \sum\limits_{i, j = 1}^n F_{ij}\partial_\nu(\partial_{ij}v) = \sum\limits_{i, j = 1}^n F_{ij}\partial_{ij}(\partial_\nu v) = \operatorname{Tr}(A D^2(\partial_\nu v)),
\end{split}
\end{equation*}

Moreover, $w_2$ is positive. Then, using that $v = u + \varphi$,
\[
\left\{
\begin{array}{rcll}
\mathcal{L}w_1 & = & -\mathcal{L}(\partial_i\varphi) & \text{in } \Omega\\
w_1 & = & 0 & \text{on } B_1 \setminus \Omega,
\end{array}
\right.
\quad and\quad
\left\{
\begin{array}{rcll}
\mathcal{L}w_2 & = & -\mathcal{L}(\p_{n-1}\varphi) & \text{in } \Omega\\
w_2 & = & 0 & \text{on } B_1 \setminus \Omega.
\end{array}
\right.
\]
where $\mathcal{L}w = \operatorname{Tr}(AD^2w)$, $A = (F_{ij} \circ D^2u)_{ij}$. Now, we will check that, after a scaling, $w_2(e_n/2) \geq 1$ and the right hand side becomes arbitrarily small. Define
$$\tilde{w}_2(x) = \frac{w_2(s x)}{s} \quad \text{and} \quad \tilde{\varphi}(x) = \frac{\varphi(s x)}{s^2}.$$

Now, we check that the right hand side is as small as required. Indeed, letting $s_k \rightarrow 0$,
\begin{align*}
\|\mathcal{L}(\partial_{n-1}\tilde{\varphi})\|_{L^q(B_1)} &\leq \Lambda\|D^3\tilde{\varphi}\|_{L^q(B_1)} = \Lambda\|s_k\varphi(s_kx)\|_{W^{3,q}(B_1)}\\
&= \Lambda s_k^{1-n/q}\|\varphi\|_{W^{3,q}(B_{s_k})} \rightarrow 0.
\end{align*}

The right hand side becomes arbitrarily small in the equation for $w_1$ analogously. Then, since $0$ is a regular free boundary point, by the convergence of the blow-up, 
$$\tilde{w}_2(e_{n-1}/2) \rightarrow \infty$$
for a sequence of values $\{s_k\} \rightarrow 0$. Now, by the interior Harnack inequality combined with the ABP estimate, since $\tilde{w}_2 \geq 0$ in $\Omega$ and the distance between the segment joining $e_{n-1}/2$ and $\pm e_n/2$ and the contact set is positive and larger than some constant $c(L,n)$ only depending on the Lipschitz constant of the free boundary and the dimension,
$$\tilde{w}_2(\pm e_n/2) \geq c_1\tilde{w}_2(e_{n-1}/2) - c_2\|\mathcal{L}(\partial_{n-1}\tilde{\varphi})\|_{L^n(B_1)} \geq c_1\tilde{w}_2(e_{n-1}/2) - c_2\Lambda\|\varphi\|_{W^{3,n}},$$
for some positive $c_1$ and $c_2$ only depending on the dimension, $L$, $\lambda$ and $\Lambda$.

Therefore, letting $s_k \rightarrow 0$, $\tilde{w}_2(\pm e_n/2) \geq 1$. If $\|\tilde{w}_1\|_{L^1} > 1$, we normalize it (notice that this step can only make the right hand side smaller). Thus, by the boundary Harnack inequality with right hand side for slit domains, Theorem \ref{thm:BH_rhs_slit}, $w_1/w_2 \in \mathcal{C}^{0, \alpha}$ in $\Omega \cap B_{s/2}$. Thus, $\partial_i u / \partial_{n-1} u \in \mathcal{C}^{0, \alpha}$. 

Now, the unit normal vector to any level set in the \textit{thin space} $\{x_n = 0\} \cap \{u = t\}$ with $t > 0$ is, by components,
$$\hat{n}^i = \frac{\partial_i u}{\sqrt{\sum_{j = 1}^{n - 1}|\partial_j u|^2}} = \frac{\partial_i u / \partial_{n-1} u}{\sqrt{1 + \sum_{j = 1}^{n - 2} (\partial_j u / \partial_{n - 1}u)^2}} \in \mathcal{C}^{0,\alpha}$$

Then, letting $t \rightarrow 0^+$, we recover that the normal vector to the free boundary is $\mathcal{C}^{0,\alpha}$, and hence the free boundary is a $\mathcal{C}^{1,\alpha}$ graph.
\end{proof}

\section{Sharpness of the results}\label{sect:counterexample}

We construct two examples that show that:
\begin{itemize}
    \item Without the smallness assumption on the Lipschitz constant of the domain, Theorem \ref{thm:BH_rhs} fails.
    \item For $q = n$, Theorem \ref{thm:BH_rhs} fails.
    \item For divergence form operators, if the coefficients are only bounded and measurable, Theorem \ref{thm:BH_rhs} fails.
\end{itemize}

As a first observation, see \cite{AS19}, take for instance $\Omega = \{x_n > 0\} \subset \R^n$, and let $u(x) = x_n$, $v(x) = x_n^2$. These functions are normalized in the sense that\linebreak $u(e_n) = v(e_n) = 1$, and vanish contiuously on $\partial\Omega$. Even in a flat domain, a function with a too large Laplacian, $|\Delta v| = 2$, will never be comparable to a harmonic function near the boundary. Hence, the right hand side of the equation must be small, otherwise the result fails.

The following example in two dimensions shows that if we ask $\Delta v$ to be small in $L^q$ norm, for any $q > n$ there is a cone narrow enough such that we can find harmonic functions that are not comparable with $v$. Moreover, if we consider a fixed cone, there exists $q > n$ such that the $L^q$ boundedness of the right hand side is not enough to have a boundary Harnack. If $q = n$, such counterexamples are valid for any cone.

\begin{prop}\label{prop:lq_counterexample}
Let $L > 0$, $q > 0$, and assume
\begin{equation}\label{eq:contraexemple}
\frac{\pi}{2\arctan(1/L)} + \frac{2}{q} > 2.
\end{equation}

Then, for every $\delta > 0$, there exists a cone $\Omega \subset \R^2$ with Lipschitz constant $L$, and positive functions $u, v$ that vanish continuously on $\partial\Omega$ such that
$$u(0,1) = v(0,1) = 1, \quad \Delta u = 0\quad \text{and} \quad \|\Delta v\|_{L^q(\Omega)} < \delta,$$
but
$$\sup\limits_\Omega \frac{u}{v} = \infty.$$

In particular, Theorem \ref{thm:BH_rhs} fails for $q = n$.
\end{prop}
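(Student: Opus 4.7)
My plan is to exhibit an explicit pair $u, v$ in polar coordinates $(r,\theta)$ adapted to the cone. Writing $\alpha = \arctan L$, $\omega = \pi - 2\alpha = 2\arctan(1/L)$, and $\beta = \pi/\omega$, I would take
$$\Omega = \{(r\cos\theta, r\sin\theta) : r > 0,\ \alpha < \theta < \pi - \alpha\},$$
which is the epigraph of the Lipschitz function $x_1 \mapsto L|x_1|$ in $\R^2$ and has Lipschitz constant exactly $L$. Hypothesis (\ref{eq:contraexemple}) is equivalent to $\beta > 2 - 2/q$. The harmonic competitor is the Martin-type function $u(r,\theta) = r^\beta \sin(\beta(\theta - \alpha))$, which is positive in $\Omega$, vanishes continuously on both rays, and satisfies $u(0,1) = 1$ (at $r=1, \theta = \pi/2$ one has $\beta(\theta-\alpha) = \beta\omega/2 = \pi/2$).

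To build $v$ I would mimic $u$ but force faster vanishing at the vertex. Fix a smooth cutoff $\chi$ with $\chi \equiv 1$ on $[0,1]$ and $\chi \equiv 0$ on $[2,\infty)$, fix $\gamma > \beta$ (to be tuned close to $\beta$), and set
$$v(r,\theta) = \psi_\gamma(r)\sin(\beta(\theta-\alpha)), \qquad \psi_\gamma(r) := \chi(r)r^\gamma + (1-\chi(r))r^\beta.$$
Then $v > 0$ in $\Omega$, $v \equiv u$ for $r \geq 2$, $v = 0$ on $\partial\Omega$, and $v(0,1) = \psi_\gamma(1) = 1$. Along the axis $\theta = \pi/2$ with $r \leq 1$ one has $u/v = r^{\beta - \gamma}$, which blows up as $r \downarrow 0$ since $\gamma > \beta$; in particular $\sup_\Omega u/v = \infty$.

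The main estimate is therefore $\|\Delta v\|_{L^q(\Omega)}$. Using the polar Laplacian and the harmonicity of $r^\beta\sin(\beta(\theta-\alpha))$, a direct computation gives $\Delta v = (\gamma^2 - \beta^2)r^{\gamma-2}\sin(\beta(\theta-\alpha))$ for $r \leq 1$, $\Delta v = 0$ for $r \geq 2$, and a bounded function of $(r,\theta,\chi,\chi',\chi'',\gamma,\beta)$ for $r \in [1,2]$. The radial integral near the vertex is $\int_0^1 r^{q(\gamma-2)+1}\,dr$, which is finite precisely when $\gamma > 2 - 2/q$; thanks to (\ref{eq:contraexemple}), any $\gamma$ slightly above $\beta$ meets this. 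The prefactor $(\gamma^2-\beta^2)^q$ forces the contribution from $r \leq 1$ to tend to zero as $\gamma \downarrow \beta$, and the contribution from $r \in [1,2]$ also vanishes in that limit because $\psi_\gamma \to r^\beta$ in $C^2([1,2])$ and $r^\beta\sin(\beta(\theta-\alpha))$ is harmonic. Hence $\|\Delta v\|_{L^q(\Omega)} \to 0$ as $\gamma \downarrow \beta$, and we select $\gamma$ to ensure $\|\Delta v\|_{L^q(\Omega)} < \delta$.

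The last assertion (failure of Theorem \ref{thm:BH_rhs} for $q = n = 2$) is then immediate: in that case (\ref{eq:contraexemple}) reduces to $\beta > 1$, i.e.\ $\omega < \pi$, which holds for every positive Lipschitz constant $L$, so the construction produces counterexamples for arbitrarily small $\delta$ and for every $L > 0$. The only delicate point in the proof is verifying that the transition term on $[1,2]$ genuinely tends to zero as $\gamma \downarrow \beta$, but this is a continuity argument since $\psi_\gamma$ depends smoothly on $\gamma$ and reduces at $\gamma = \beta$ to the harmonic profile; no genuine obstacle beyond careful bookkeeping is expected.
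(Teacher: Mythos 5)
Your proof is correct, and it follows a genuinely different construction from the paper's. You both start from the same cone $\Omega$ and the same harmonic benchmark $u=r^\beta\sin(\beta(\theta-\alpha))$ (the paper writes it as $\operatorname{Re}((-ix+y)^\beta)$, which is the same function), but the competitor $v$ differs. The paper takes $v=u-\sum_{k\ge k_0}(1-2^{-k})\psi_{2^{-k}}$, a subtraction of dyadically rescaled bump functions with disjoint supports near the axis; positivity of $v$ follows from $0\le\psi_\varepsilon\le u$, the ratio $u/v$ blows up along the support of the bumps, and smallness of $\|\Delta v\|_{L^q}$ comes from the geometric series $\sum 2^{-k(\beta-2+2/q)}$, which converges precisely under the hypothesis $\beta+2/q>2$. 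You instead modify the radial profile itself, replacing $r^\beta$ by $r^\gamma$ near the vertex via a fixed cutoff, with $\gamma>\beta$ chosen close to $\beta$. In your case smallness of $\|\Delta v\|_{L^q}$ comes from two effects: the prefactor $(\gamma^2-\beta^2)^q$, which tends to $0$ as $\gamma\downarrow\beta$, multiplied against a radial integral $\int_0^1 r^{q(\gamma-2)+1}\,dr$ which stays bounded because $\gamma>\beta>2-2/q$; and the $C^2$-continuous dependence of $\psi_\gamma$ on $\gamma$, which forces the transition-zone term to vanish in that limit. Both routes are fully rigorous and of comparable length. Yours has the advantage that $v$ and $\Delta v$ are completely explicit (no bump function to construct, no infinite series), and it makes transparent why the threshold $\beta=2-2/q$ is sharp; the paper's approach has the advantage of being more obviously robust and adaptable (it is reused almost verbatim for the divergence-form counterexample in Proposition~\ref{prop:counterexample_div_form}).
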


\begin{proof}
Consider the cone $\Omega = \{(x,y) \in \R^2 : y > L|x|\}$ and let 
$$\beta = \frac{\pi}{2\arctan(1/L)} \quad \text{and} \quad u(x, y) = \operatorname{Re}\left((-ix+y)^\beta\right).$$

Then, $u$ is harmonic, positive in $\Omega$, and vanishes continuously on $\partial\Omega$. Let $\psi$ be a positive smooth function such that
$$0 \leq \psi \leq u, \quad \operatorname{Supp} \psi \subset B_{1/3}(0,1) \quad \text{and} \quad \psi(0,1) = u(0,1) = 1.$$

Define the scalings $\psi_\varepsilon(x,y) = \varepsilon^\beta\psi(x/\varepsilon,y/\varepsilon)$. Since $u$ is homogeneous of degree $\beta$, $0 \leq \psi_\varepsilon \leq u$. Moreover,
$$\Delta \psi_\varepsilon(x, y) = \varepsilon^{\beta - 2}(\Delta\psi)(x / \varepsilon, y / \varepsilon).$$

Now, we construct $v$ as the following infinite sum, that converges uniformly.
$$v := u - \sum\limits_{k = k_0}^\infty(1-2^{-k})\psi_{2^{-k}}.$$
Since the supports of $\psi_{2^{-k}}$ are disjoint, $v \geq 0$. On the other hand,
\begin{align*}
&\left\|\Delta v\right\|_{L^q(\Omega)} \leq \sum\limits_{k = k_0}^\infty\|\Delta\psi_{2^{-k}}\|_{L^q(\Omega)} = \sum\limits_{k = k_0}^\infty2^{-k(\beta-2+2/q)}\|\Delta\psi\|_{L^q(\Omega)} \rightarrow 0
\end{align*}
as $k_0 \rightarrow \infty$, since $\beta + 2/q > 2$ by hypothesis. Hence, we can choose $k_0$ big enough so that $\|\Delta v\|_{L^q(\Omega)} < \delta$.

To end, for $k \geq k_0$,
$$\frac{u(2^{-k},0)}{v(2^{-k},0)} = 2^k \rightarrow \infty,$$
as wanted.
\end{proof}

\begin{obs}
Since $\arctan(1/L) \in (0,\pi/2)$, the first term in the condition (\ref{eq:contraexemple}) is always greater than $1$, hence, if $q \leq 2$ there are always counterexamples to the boundary Harnack with right hand side bounded in $L^q$.

On the other hand, if $L > 1$, $\arctan(1/L) < \pi/4$, and the condition is fulfilled for all $q > 0$ and $q = \infty$.

The limiting case $L = 0$, $q = 2$ (or $q = n$ in higher dimensions) corresponds to domains that are locally a half-space. We have not considered this particular case in our setting.
\end{obs}

The existence of such example shows that, to have a boundary Harnack inequality for equations with a right hand side, we need the Lipschitz constant of the boundary to be sufficiently small, and also the right hand side to be small compared to the values of the function. It also shows a trade-off between the maximum possible slope of the boundary and the exponent of the $L^q$ boundedness of the right hand side.

On the other hand, it is impossible to have a boundary Harnack for equations with right hand side in Lipschitz domains with \textit{narrow} corners, even less in Hölder domains or more general domains, under the reasonable hypothesis ${\Delta u = f \in L^\infty}$, with $\|f\|_{L^\infty}$ small. 

However, the boundary Harnack holds for divergence form operators in Lipschitz domains with big Lipschitz constants when the right hand side vanishes as a big enough power of the distance \cite{AS19}. It is likely that we could prove the same for non-divergence form operators, but we will not pursue this because it cannot be used in the context of free boundary problems.

The following example is based in a counterexample to the Hopf lemma for divergence operators with discontinuous coefficients \cite{Naz12}, and shows that the boundary Harnack for equations with a right hand side fails in this setting.

\begin{prop}\label{prop:counterexample_div_form}
There exists $\mathcal{L}$ in divergence form with discontinuous coefficients and positive functions $u, v$ in $\{y > 0\} \subset \R^2$ that vanish continuously at $\{y = 0\}$ such that
$$u(1,1) = v(1,1) = 1, \quad \mathcal{L}u = 0 \text{ in } \{y > 0\} \quad \text{and} \quad \|\mathcal{L}v\|_{L^\infty(B_1^+)} < \delta,$$
for any given $\delta > 0$, but
$$\sup\limits_{B_{1/2}^+}\frac{u}{v} = \infty.$$
In particular, Theorem \ref{thm:BH_rhs} fails if the divergence form operator has discontinuous coefficients.
\end{prop}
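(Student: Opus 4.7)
The plan is to construct a divergence-form operator $\mathcal{L} = \mathrm{div}(A(x)\nabla \cdot)$ with uniformly elliptic, bounded measurable, discontinuous coefficients on $\R^2$ for which two separate $\mathcal{L}$-harmonic functions on $\{y > 0\}$ vanishing on $\{y = 0\}$ exhibit incompatible decay rates at the origin. Since both $u$ and $v$ will satisfy $\mathcal{L} u = \mathcal{L} v = 0$, the bound $\|\mathcal{L} v\|_{L^\infty(B_1^+)} < \delta$ holds automatically for every $\delta > 0$, and yet the boundary Harnack conclusion will fail. This reduces the proposition to exhibiting an operator with two such solutions, which is essentially the content of Nazarov's counterexample to the Hopf lemma for divergence equations \cite{Naz12}.

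For the construction I would take $A(x) = A_0(x/|x|)$ scale-invariant, with $A_0(\theta) = \mathrm{diag}(a(\theta), 1)$ uniformly elliptic and $a(\theta) > 0$ bounded, measurable and discontinuous (for instance, piecewise constant with jumps on a partition of $(0,\pi)$). Taking the second diagonal entry equal to $1$ forces the second column of $A$ to be divergence-free as a distribution, so the linear function $\ell(x,y) := y$ satisfies $\mathcal{L}\ell = 0$ weakly. Simultaneously, following \cite{Naz12}, by choosing $a(\theta)$ with a sufficiently strong jump between adjacent sectors I obtain a positive homogeneous solution $U(r\cos\theta, r\sin\theta) = r^s \Phi(\theta)$ of $\mathcal{L} U = 0$ in $\{y > 0\}$, vanishing on $\{y=0\}$, with exponent $s > 1$ and $\Phi$ continuous, positive on $(0, \pi)$, and vanishing at the endpoints.

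Setting $u := y$ and $v := U/U(1,1)$, both functions are positive in $\{y > 0\}$, vanish continuously on $\{y = 0\}$, satisfy $\mathcal{L} u = \mathcal{L} v = 0$ weakly, and are normalized so that $u(1,1) = v(1,1) = 1$. Along the sequence $P_k := (0, 2^{-k}) \in B_{1/2}^+$ (valid for $k \geq 2$) one computes $u(P_k) = 2^{-k}$ and $v(P_k) = 2^{-ks}\Phi(\pi/2)/U(1,1)$, hence
\[
\frac{u(P_k)}{v(P_k)} = \frac{U(1,1)}{\Phi(\pi/2)}\, 2^{k(s-1)} \longrightarrow \infty
\]
as $k \to \infty$, so $\sup_{B_{1/2}^+} u/v = \infty$, contradicting the conclusion of Theorem \ref{thm:BH_rhs}. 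The main obstacle is the Nazarov-type step: verifying that within the class of scale-invariant diagonal coefficients $a(\theta)$ the angular eigenvalue problem admits a positive solution with homogeneity exponent strictly greater than $1$ (the Laplacian value), which requires a careful choice of jump ratios. This is the technical heart of the argument and is imported from \cite{Naz12}; everything else is a direct verification.
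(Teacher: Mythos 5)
Your construction cannot work, and the gap is not the deferred technical lemma but the overall strategy. You arrange, by fixing the second column of $A$ to be $(0,1)$, that $u:=y$ is $\mathcal{L}$-harmonic, and you then want a second positive $\mathcal{L}$-harmonic function $v$, also vanishing continuously on $\{y=0\}$, homogeneous of degree $s>1$, so that $u/v\to\infty$ along $(0,2^{-k})$. But this conclusion directly contradicts the classical boundary Harnack inequality of Caffarelli--Fabes--Mortola--Salsa \cite{CFMS81}, which is valid for divergence-form operators with merely \emph{bounded measurable} coefficients in Lipschitz domains (in particular the half-plane), and which forces $C^{-1}\leq u/v\leq C$ in $B_{1/2}^+$ for any two such normalized positive solutions. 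In particular, applied to $y$ and a candidate homogeneous solution of degree $s$ it forces $s=1$; once $y$ solves the equation, no positive homogeneous $\mathcal{L}$-harmonic function vanishing on $\{y=0\}$ of degree $s\neq 1$ can exist for that operator. So the ``Nazarov-type step'' is not something you can import: within your framework it is false. Nazarov's example \cite{Naz12} does produce a positive homogeneous solution of anomalous degree, but for an operator for which $y$ is \emph{not} a solution. More fundamentally, no counterexample with $\mathcal{L}u=\mathcal{L}v=0$ exists at all; Proposition \ref{prop:counterexample_div_form} must exhibit a failure of the boundary Harnack \emph{with right-hand side}, not of the homogeneous one, so $v$ cannot be taken $\mathcal{L}$-harmonic.

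The paper proceeds differently: it takes a single positive $\mathcal{L}$-harmonic function $u$ vanishing on $\{y=0\}$ and homogeneous of degree $\beta=3$ (for the operator with off-diagonal entries $-6\operatorname{sgn}(x)$, with $u=(y^3+18|x|y^2+72x^2y)/91$), and defines $v=u-\sum_{k\geq k_0}(1-2^{-k})\psi_{2^{-k}}$, where $\psi$ is a smooth bump with $0\leq\psi\leq u$ supported near $(1,1)$ and $\psi_\varepsilon(x,y)=\varepsilon^{\beta}\psi(x/\varepsilon,y/\varepsilon)$. Here $\mathcal{L}v\neq0$, but $\|\mathcal{L}\psi_\varepsilon\|_{L^\infty}=\varepsilon^{\beta-2}\|\mathcal{L}\psi\|_{L^\infty}$, and the crucial inequality $\beta-2>0$ makes $\|\mathcal{L}v\|_{L^\infty}<\delta$ once $k_0$ is large; meanwhile $v(2^{-k},2^{-k})=2^{-k}u(2^{-k},2^{-k})$, so $u/v\to\infty$. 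Thus the essential role of the discontinuous coefficients is to produce a positive homogeneous solution of degree strictly greater than $2$ (not merely greater than $1$): that is exactly what leaves room for a nontrivial, $L^\infty$-small perturbation of the right-hand side.
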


\begin{proof}
Let $\mathcal{L}u = \operatorname{Div}(A(x,y)\nabla u)$, with
$$A(x,y) = \begin{pmatrix}
1 & -6\operatorname{sgn}(x)\\
-6\operatorname{sgn}(x) & 48
\end{pmatrix}.$$
It is easy to check that $\mathcal{L}$ is uniformly elliptic and that 
$$u(x,y) = \frac{y^3 + 18|x|y^2 + 72x^2y}{91}$$
is a solution of $\mathcal{L}u = 0$. Now we will define $v$ as a perturbation of $u$, in a similar way as in Proposition \ref{prop:lq_counterexample}. We will use that the coefficients $A(x,y)$ are constant in the positive quadrant.

Let $\psi$ be a positive smooth function such that
$$0 \leq \psi \leq u, \quad \operatorname{Supp} \psi \subset B_{1/3}(1,1) \quad \text{and} \quad \psi(1,1) = u(1,1) = 1.$$

Define the scalings $\psi_\varepsilon(x,y) = \varepsilon^3\psi(x/\varepsilon,y/\varepsilon)$. Since $u$ is homogeneous of degree $3$, $0 \leq \psi_\varepsilon \leq u$. Moreover,
$$\mathcal{L} \psi_\varepsilon(x, y) = \varepsilon(\mathcal{L}\psi)\left(\frac{x}{\varepsilon},\frac{y}{\varepsilon}\right).$$

Now, we construct $v$ as the following infinite sum, that converges uniformly.
$$v := u - \sum\limits_{k = k_0}^\infty(1-2^{-k})\psi_{2^{-k}}.$$
Since the supports of $\psi_{2^{-k}}$ are disjoint, $v \geq 0$. On the other hand,
\begin{align*}
&\left\|\mathcal{L} v\right\|_{L^\infty(B_1^+)} \leq \sum\limits_{k = k_0}^\infty\|\mathcal{L}\psi_{2^{-k}}\|_{L^\infty(B_1^+)} = \sum\limits_{k = k_0}^\infty2^{-k}\|\mathcal{L}\psi\|_{L^\infty(B_1^+)} \rightarrow 0
\end{align*}
as $k_0 \rightarrow \infty$. Hence, we can choose $k_0$ big enough so that $\|\mathcal{L} v\|_{L^\infty(B_1^+)} < \delta$.

To end, for $k \geq k_0$,
$$\frac{u(2^{-k},2^{-k})}{v(2^{-k},2^{-k})} = 2^k \rightarrow \infty,$$
as wanted.
\end{proof}

\section{Hopf lemma for non-divergence equations with right hand side}\label{sect:hopf}
We now recall the classical Hopf lemma in a very general version for non-divergence elliptic equations \cite{LZ18}.

\begin{thm}\label{thm:hopf}
Suppose that $\Omega$ satisfies the interior $\mathcal{C}^{1,\mathrm{Dini}}$ condition at $0 \in \partial\Omega$ and $u \in \mathcal{C}(\overline{\Omega}\cap B_1)$ satisfies
$$\mathcal{M}^-(D^2u) \leq 0 \quad \text{in} \quad \Omega \cap B_1$$
in the $L^n$-viscosity sense with $u(0) = 0$ and $u \geq 0$ in $\Omega\cap B_1$.

Then for any $l = (l_1,\ldots,l_n) \in \R^n$ with $|l| = 1$ and $l_n > 0$,
$$u(rl) \geq cl_nu(e_n/2)r, \quad r \in (0,\delta),$$
where $c > 0$ and $\delta$ depend only on the dimension, $\lambda$, $\Lambda$ and the modulus of continuity of the domain.
\end{thm}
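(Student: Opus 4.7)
The plan is to combine the weak Harnack inequality with a barrier argument adapted to domains satisfying a $\mathcal{C}^{1,\mathrm{Dini}}$ interior condition. First, using the interior $\mathcal{C}^{1,\mathrm{Dini}}$ condition at $0$, I would fix (after a rotation) an interior region $D \subset \Omega \cap B_1$ of the form $D = \{x \in B_\rho : x_n > \omega(|x'|)\}$ with $0 \in \partial D$ and inward normal $e_n$, where $\omega$ is an increasing concave modulus satisfying the Dini integrability condition $\int_0^1 \omega(s)/s\,ds < \infty$. Note that $e_n/2 \in D$ for $\rho$ chosen small enough depending only on the modulus.

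Second, since $u \geq 0$ and $\mathcal{M}^-(D^2u) \leq 0$, the weak Harnack inequality applied on a ball $B_r(e_n/2) \subset \Omega$ gives
\begin{equation*}
\inf_{B_{r/2}(e_n/2)} u \;\geq\; c_1 u(e_n/2),
\end{equation*}
with $c_1 > 0$ depending only on the dimension and the ellipticity. Hence it suffices to produce a subsolution in $D$ which vanishes on the lower boundary $\{x_n = \omega(|x'|)\}$, is bounded above by $c_1 u(e_n/2)$ on $B_{r/2}(e_n/2)$, and separates linearly from $0$ in the normal direction; the comparison principle (via the ABP estimate, Theorem \ref{thm:ABP}, valid for $L^n$-viscosity solutions) will then yield $u \geq \varphi$ in $D$.

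Third, and this is the main obstacle, I would construct the Dini barrier $\varphi$. A convenient ansatz is
\begin{equation*}
\varphi(x) = a\bigl(x_n - h(|x'|)\bigr),
\end{equation*}
where $h$ dominates $\omega$ on $\{|x'| \leq \rho\}$ and is designed so that $\mathcal{M}^-(D^2\varphi) \geq 0$. A standard choice is $h(t) = \int_0^t \bigl(\omega(s) + s\cdot\Psi(s)\bigr)/s\,ds$ for a suitable increasing $\Psi$ built from the Dini tail $\int_0^t \omega(s)/s\,ds$; the concavity of $h$ makes $-h''(|x'|) \geq 0$ dominate the transversal part of the Pucci extremal, while the Dini integrability makes $h$ finite and small near $0$. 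The verification that $\mathcal{M}^-(D^2\varphi) \geq 0$ reduces to a one-variable inequality of the form $\lambda(-h''(t)) - (n-2)\Lambda h'(t)/t \geq 0$, which is arranged by the choice of $\Psi$. Then I would normalize $a$ so that $\varphi \leq c_1 u(e_n/2)$ at the \emph{top} of $D$ (where $x_n \sim \rho$), which forces $a \gtrsim u(e_n/2)/\rho$.

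Finally, for $l \in \mathbb{S}^{n-1}$ with $l_n > 0$ and $r \in (0,\delta)$ small (so that $\omega(|rl'|) \ll rl_n$, which holds by $\omega(s)/s \to 0$), the comparison principle yields
\begin{equation*}
u(rl) \;\geq\; \varphi(rl) \;=\; a\bigl(rl_n - h(|rl'|)\bigr) \;\geq\; \tfrac{a}{2}\, r\, l_n \;\geq\; c\, l_n\, u(e_n/2)\, r,
\end{equation*}
with $c$ and $\delta$ depending only on $n$, $\lambda$, $\Lambda$ and the Dini modulus of $\Omega$. The hardest technical point is producing an explicit subsolution barrier under merely Dini regularity; everything else reduces to scaling, the weak Harnack inequality, and the ABP-based comparison principle.
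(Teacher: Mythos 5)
The paper does not prove this statement: it is quoted verbatim from Lian and Zhang \cite{LZ18}, so there is no internal proof to compare your attempt against. Your high-level scheme (interior weak Harnack at $e_n/2$, plus an ABP comparison against a subsolution barrier supported in the interior $\mathcal{C}^{1,\mathrm{Dini}}$ region) is the natural first idea, but the barrier step has a genuine gap.

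The ansatz $\varphi(x) = a\bigl(x_n - h(|x'|)\bigr)$ with $h$ concave and increasing produces a Hessian with eigenvalues $0$ (in $e_n$), $-ah''(t)\geq 0$ (radial in $x'$), and $-ah'(t)/t\leq 0$ with multiplicity $n-2$ (tangential). Hence $\mathcal{M}^-(D^2\varphi)\geq 0$ is equivalent to
$$\lambda\bigl(-h''(t)\bigr) \;\geq\; (n-2)\Lambda\,\frac{h'(t)}{t},$$
i.e.\ $\bigl(\log h'\bigr)'(t)\leq -C/t$ with $C=(n-2)\Lambda/\lambda$. Integrating gives $h'(t)\geq h'(\rho)(\rho/t)^C$, so $h'$ blows up like $t^{-C}$ near the origin; when $C\geq 1$ (which is the generic case for $n\geq 3$ unless $\Lambda/\lambda$ is very close to $1/(n-2)$), $h'$ is not integrable and $h$ cannot satisfy $h(0)=0$. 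But $h(0)=0$ and $h(t)=o(t)$ are essential for $\varphi$ to vanish at $0$ and to recover a lower bound proportional to $rl_n$. Moreover, your correction term $\Psi$ works against you: in $\lambda(-h'')\geq(n-2)\Lambda h'/t$ the term $-\lambda\Psi'(t)$ appears on the left and $+(n-2)\Lambda\Psi(t)/t$ on the right, so a nonnegative increasing $\Psi$ makes the inequality harder, not easier. In short, no choice of a single graph-type barrier of this form is a Pucci subsolution with the required boundary behaviour.

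The proofs in the literature of this Dini Hopf lemma for non-divergence operators (Lian--Zhang \cite{LZ18}; see also Apushkinskaya--Nazarov \cite{AN16, AN18}) avoid a global barrier and instead run a dyadic iteration: at each scale $r_k=2^{-k}$ one compares $u$ with a \emph{flat} (planar) barrier on $B_{r_k}$ losing a small error controlled by $\omega(r_k)$, uses the interior Harnack inequality to transfer information between scales, and sums the losses using the Dini condition $\sum_k\omega(r_k)<\infty$. That mechanism is what replaces the explicit subsolution you tried to build, and it is the missing ingredient in your proposal.
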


We can use this result to prove a generalized Hopf lemma for the solutions of non-divergence equations with small right hand side.

\begin{cor}
Let $q > n$ and $\mathcal{L}$ in non-divergence form as in (\ref{eq:non-divergence_operator}). There exist small $c_0 > 0$ and $L_0 > 0$ such that the following holds.

Let $\Omega$ be a Lipschitz domain as in Definition \ref{defn:lipschitz_domain}, with Lipschitz constant $L < L_0$. Suppose further that $\p\Omega$ is a $\mathcal{C}^{1,\mathrm{Dini}}$ graph. Let $v$ be a solution of
$$\left\{
\begin{array}{rcll}
\mathcal{L}v & = & f & \text{in } \Omega \cap B_1\\
v & = & 0 & \text{on } \partial\Omega \cap B_1
\end{array}
\right.$$
in the $L^n$-viscosity, with $v > 0$ in $\Omega\cap B_1$ and
$$\|f\|_{L^q(B_1)} \leq c_0v(e_n/2).$$

Then, for any $l = (l_1,\ldots,l_n) \in \R^n$ with $|l| = 1$ and $l_n > 0$,
$$v(rl) \geq cl_nv(e_n/2)r, \quad r \in (0,\delta),$$
where $c$, $c_0$ and $\delta$ are positive and depend only on the dimension, $\lambda$, $\Lambda$ and the modulus of continuity of the domain.
\end{cor}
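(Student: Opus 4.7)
The plan is to reduce the statement to the classical Hopf lemma (Theorem \ref{thm:hopf}) by comparing $v$ with the $\mathcal{L}$-harmonic function sharing its boundary values, and then invoking our new boundary Harnack to transfer the non-degeneracy back to $v$.

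After dividing through by $v(e_n/2)$, we may assume $v(e_n/2) = 1$ and $\|f\|_{L^q(B_1)} \leq c_0$. Let $\tilde v \in \mathcal{C}(\overline{\Omega \cap B_1})$ be the unique $L^n$-viscosity solution of $\mathcal{L}\tilde v = 0$ in $\Omega \cap B_1$ with $\tilde v = v$ on $\partial(\Omega \cap B_1)$. By the maximum principle $\tilde v \geq 0$, and since $v > 0$ in the interior of $\Omega \cap B_1$ (in particular on $\Omega \cap \partial B_1$), the strong maximum principle yields $\tilde v > 0$ in $\Omega \cap B_1$. Applying the ABP estimate (Theorem \ref{thm:ABP}) to $v - \tilde v$, which solves $\mathcal{L}(v-\tilde v) = f$ with zero boundary data, together with H\"older's inequality $\|f\|_{L^n(B_1)} \leq |B_1|^{(q-n)/(nq)}\|f\|_{L^q(B_1)}$, gives
$$\|v - \tilde v\|_{L^\infty(\Omega \cap B_1)} \leq C c_0.$$
Choosing $c_0$ so that $Cc_0 \leq 1/2$ yields $\tilde v(e_n/2) \in [1/2, 3/2]$.

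Next, set $\hat v := \tilde v / \tilde v(e_n/2)$, so that $\mathcal{L}\hat v = 0$ and $\hat v(e_n/2) = 1$. Since $v$ and $\hat v$ are both positive in $\Omega \cap B_1$, vanish on $\partial\Omega \cap B_1$, are normalized at $e_n/2$, and have right hand sides bounded by $c_0$ in $L^q$, we may apply Corollary \ref{cor:BH_rhs_symmetric} (choosing $L_0$ no larger than the constant there) to conclude
$$v \geq C^{-1}\hat v \geq (2C)^{-1}\tilde v \quad \text{in} \quad \Omega \cap B_{1/2}.$$
Finally, the classical Hopf lemma (Theorem \ref{thm:hopf}) applies to $\tilde v$: indeed $\mathcal{L}\tilde v = 0$ implies $\mathcal{M}^-(D^2\tilde v) \leq 0$ in the $L^n$-viscosity sense, $\tilde v(0) = 0$, $\tilde v \geq 0$, and $\Omega$ has the interior $\mathcal{C}^{1,\mathrm{Dini}}$ condition at $0$ by hypothesis. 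Thus for every unit vector $l$ with $l_n > 0$ and every $r \in (0, \delta)$,
$$\tilde v(rl) \geq c\, l_n\, \tilde v(e_n/2)\, r \geq \frac{c}{2}\, l_n\, r.$$
Combining the two displays yields $v(rl) \geq c'\, l_n\, r$, and undoing the normalization by multiplying by $v(e_n/2)$ gives the claim.

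The main technical points are simply bookkeeping of constants: $L_0$ must be taken small enough so that the boundary Harnack in Corollary \ref{cor:BH_rhs_symmetric} applies, and $c_0$ must be shrunk so that the ABP error $Cc_0$ does not exceed $1/2$ (to keep $\tilde v(e_n/2)$ comparable to $1$). Once these are fixed, the argument reduces the boundary Harnack with right hand side to the classical Hopf lemma for homogeneous equations in a clean way.
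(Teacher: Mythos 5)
Your proof is correct and follows essentially the same strategy as the paper: compare $v$ with an $\mathcal{L}$-harmonic function vanishing on $\partial\Omega\cap B_1$, apply the new boundary Harnack to transfer lower bounds, and invoke the classical Hopf lemma (Theorem~\ref{thm:hopf}) for the harmonic function. The paper simply takes \emph{any} positive $\mathcal{L}$-harmonic $u$ vanishing on $\partial\Omega\cap B_1$, normalizes $u(e_n/2)=1$, and applies the asymmetric Theorem~\ref{thm:BH_rhs} to get $u\leq Cv$; you instead take the specific harmonic replacement $\tilde v$ of $v$ on $\Omega\cap B_1$, prove via ABP that $\tilde v(e_n/2)\in[1/2,3/2]$, and apply the symmetric Corollary~\ref{cor:BH_rhs_symmetric}. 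Your ABP step is in fact superfluous: once you set $\hat v = \tilde v/\tilde v(e_n/2)$ the Hopf lemma applied to $\hat v$ directly gives $\hat v(rl) \geq c\,l_n\,r$ and the boundary Harnack gives $v\geq C^{-1}\hat v$, so there is no need to control $\tilde v(e_n/2)$ separately. The only genuine value added by your write-up is that it makes explicit a choice of boundary data for the auxiliary harmonic function (the paper leaves this unspecified), and it records that the chosen $\tilde v$ is indeed strictly positive; otherwise the two arguments coincide.
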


\begin{proof}
Assume $v(e_n/2) = 1$ without loss of generality. Let $u$ be a positive solution of the Dirichlet problem
\begin{equation*}
\left\{
\begin{array}{rcll}
\mathcal{L}u & = & 0 & \text{in } \Omega \cap B_1\\
u & = & 0 & \text{on } \partial\Omega \cap B_1.
\end{array}
\right.
\end{equation*}
After dividing by a constant, $u(e_n/2) \leq 1$.

Now, by Theorem \ref{thm:BH_rhs}, we have $u \leq Cv$ in $B_{1/2}$, hence the estimate of Theorem \ref{thm:hopf} for $u$ is also valid for $Cv$, and the result follows.
\end{proof}

\end{document}